\providecommand{\tabularnewline}{\\}
\numberwithin{equation}{section}
\numberwithin{figure}{section}
\numberwithin{table}{section}
 \theoremstyle{definition}
 \newtheorem*{defn*}{\protect\definitionname}
  \theoremstyle{plain}
  \newtheorem*{prop*}{\protect\propositionname}
  \theoremstyle{remark}
  \newtheorem{rem}{\protect\remarkname}[section]
  \theoremstyle{definition}
  \newtheorem{defn}{\protect\definitionname}[section]
  \theoremstyle{plain}
  \newtheorem{thm}{\protect\theoremname}[section]
  \theoremstyle{plain}
  \newtheorem{prop}{\protect\propositionname}[section]
  \theoremstyle{plain}
  \newtheorem{lem}{\protect\lemmaname}[section]
\newtheorem{obs}{Observation}[section]
\newcommand{\gr}{\mathcal{G}}
\newcommand{\moduli}{\mathcal{M}}
\newcommand{\tei}{\mathcal{T}}
\newcommand{\real}{\mathbb{R}}
  \providecommand{\definitionname}{Definition}
  \providecommand{\lemmaname}{Lemma}
  \providecommand{\propositionname}{Proposition}
  \providecommand{\remarkname}{Remark}
\providecommand{\theoremname}{Theorem}
\begin{document}

\title{Pressure Type Metrics on Spaces of Metric Graphs}

\author{Lien-Yung Kao\thanks{Department of Mathematics, University of Notre Dame, Notre Dame, IN
46545 USA. \textit{E-mail}\texttt{:\protect\href{mailto:lkao@nd.edu}{lkao@nd.edu}}}}

\maketitle
%\begin{center}\today\end{center}
\begin{abstract}
\textsf{In this note, we consider two Riemannian metrics on a moduli
space of metric graphs. Each of them could be thought of as an analogue
of the Weil-Petersson metric on the moduli space of metric graphs.
We discuss and compare geometric features of these two metrics with
the ``classic'' Weil-Petersson metric in Teichmüller theory. This
paper is motivated by Pollicott and Sharp's work \cite{Pollicott:2014fs}.
Moreover, we fix some errors in \cite{Pollicott:2014fs}.}
\end{abstract}

\tableofcontents{}

\section{Introduction}

\textsf{This note is a further study of a dynamical-system-theoretically
defined metric on deformation spaces-- the pressure metric. The study
of pressure metrics is ignited by McMullen's study of the Weil-Petersson
metric on Teichmüller space. In \cite{McMullen:2008eh}, McMullen
proved that one can realize the Weil-Petersson metric on Teichmüller
space by the pressure metric (on a certain functional space). The
pressure metric has been a great tool for defining and studying the
Weil-Petersson metrics on a variety of contexts: Teichmüller spaces
\cite{McMullen:2008eh}, Anosov representations \cite{Bridgeman:2013to}
and Blaschke products \cite{McMullen:2008eh}. However, at this point,
most of our understandings of the geometry of the pressure metric
are coming from its relation with the ''classic'' Weil-Petersson
metric. In this work, carrying over ideas from Pollicott--Sharp's
work \cite{Pollicott:2014fs}, we focus on investigating the pressure
metric geometry from a more dynamical approach.}

\textsf{More explicitly, we follow Pollicott-Sharp's construction
of pressure type metrics on the moduli space of metric graphs, and
we consider two ``natural''}\footnote{\textsf{They are natural in the sense that the constructions of these
two metrics are close to the construction of the ``classic'' Weil-Petersson
metric on Teichmüller spaces introduced by McMullen.}}\textsf{ pressure type metrics on the moduli space of graphs. We correct
a formula in \cite{Pollicott:2014fs}, and using the revised formulas
we examine geometric features of these two ``natural'' pressure
type metrics on a moduli space of ``typical''}\footnote{\textsf{Graphs whose the fundamental group is a free group of rank
2. }}\textsf{ graphs, and these examples show that geometric behaviors
of these two ``natural'' pressure type metrics are very far from
what we know about the ``classic'' Weil-Petersson metric (i.e. the
pressure metric) on Teichmüller spaces. }

\textsf{To state our result more precisely and to put it in context,
we first review basic setups in Teichmüller theory (cf. Section \ref{sec:Preliminaries}
for more details). Let $S$ be a compact topological surface with
negative Euler characteristics. Teichmüller space $\mathcal{T}(S)$
could be thought of as the set of isotopy classes of Riemannian metrics
with constant curvature $-1$, and the moduli space $\mathcal{M}(S)$
could be described as the set of isometry classes of Riemannian metrics
with constant curvature $-1$. Moreover, the moduli space $\mathcal{M}(S)$
is obtained by quotienting Teichmüller space $\mathcal{T}(S)$ by
the mapping class group $\mathrm{MCG}(S)$. The Weil-Petersson metric
is a naturally defined and well-studied $\mathrm{MCG}$-invariant
metric on Teichmüller space (thus on the moduli space) with several
striking features:}
\begin{itemize}
\item \textsf{the Weil-Petersson metric is negatively curved,}
\item \textsf{the sectional curvature are neither bounded away form 0 nor
$-\infty$, and}
\item \textsf{the Weil-Petersson metric is incomplete.}
\end{itemize}
\textsf{McMullen's result in \cite{McMullen:2008eh} shows that on
Teichmüller space we can define a Riemannian metric, }\textit{the
pressure metric}\textsf{, via the thermodynamic formalism, and which
is exactly the Weil-Petersson metric. In other words, the pressure
metric shares these notable geometric features with the Weil-Petersson
metric on Teichmüller space. }

\begin{defn*}
Given a \textit{undirected finite graph} $\mathcal{G}$ with \textit{edge
set} $\mathcal{E}$. The \textit{edge weighting function} $l:\mathcal{E}\to\real_{>0}$
assigns to each edge a length, which endows a metric structure onto
$\mathcal{G}$. We call the pair $(\mathcal{G},l)$ a \textit{metric
graph}. 
\end{defn*}

\textsf{From a dynamical point of view, the metric graphs possess
very similar structures as Riemann surfaces. Dynamics of paths on
metric graphs is analogous to the dynamics of the geodesic flow for
Riemann surfaces. It is because the length weighting function $l$
on $\mathcal{G}$ plays the same role as a Riemannian metric on surfaces.
Hence, it is natural to begin the study of the pressure metric geometry
from deformation spaces of metric graphs. Here, our deformation space
corresponding to the graph $\mathcal{G}$ is the space $\mathcal{M}_{\mathcal{G}}$
of all edge weighting functions.}

\begin{defn*}
For a graph $\mathcal{G}$ and an edge weighting function $l$ the
\textit{entropy} $h(l)$ is defined by 
\[
h(l)=\lim_{T\to\infty}\frac{1}{T}\log\#\{\gamma;l(\gamma)<T\}
\]
 where $\gamma=(e_{0},e_{1},...,e_{n}=e_{0})$ is a closed cycle of
edges in $\mathcal{G}$ (without backtracking) and $l(\gamma)={\displaystyle \sum_{i=0}^{n-1}l(e_{i})}$. 
\end{defn*}

\textsf{From the dynamical perspective, the entropy $h(l)$ for metric
graphs, as we have seen in surface cases, is an important and informative
quality. We recall that the moduli space $\mathcal{M}(S)$ is the
collection (up to isometry) of Riemannian metric on $S$ with constant
curvature $-1$. We notice that the constant negative curvature condition
of $\mathcal{M}(S)$ could be interpreted dynamically by using the
constant (topological) entropy (of the geodesic flow on $S$) condition.
More precisely, because when $S$ has a constant negative curvature
(say $K(S))$, the topological entropy of the geodesic flow for $S$
is equal to $\sqrt{|K(S)|}$. Thus, to derive a close analogy to the
moduli space $\mathcal{M}(S)$, it is natural and dynamical meaningful
to consider the condition that entropy $h(l)$ equal to 1. Moreover,
there is one more reason for us to concentrate on the space $\mathcal{M}_{\mathcal{G}}^{1}$;
however, this reason is more technical and it's from the nature of
pressure type metrics. We will explain this reason in Remark \ref{Rmk: why h=00003D1 is enough}.
In what follows, we will focus on the deformation space $\mathcal{M}_{\mathcal{G}}^{1}$
of metric graphs with entropy 1. i.e. }

\textsf{
\[
\mathcal{M}_{\mathcal{G}}^{1}=\left\{ l:\mathcal{E}\to\real_{>0};h(l)=1\right\} .
\]
}

\textsf{Inspired by McMullen \cite{McMullen:2008eh}, Pollicott and
Sharp constructed a pressure type metric (they call it a Weil-Petersson
type metric) for metric graphs \cite{Pollicott:2014fs}. We follow
their work and  construct another pressure type metric for $\mathcal{M}_{\mathcal{G}}^{1}$.
We call these two pressure type metrics the }\textit{pressure metric}\textsf{
and the }\textit{Weil-Petersson metric}\textsf{ for $\mathcal{M}_{\mathcal{G}}^{1}$,
and denote them by $||\cdot||_{P}$ and $||\cdot||_{WP}$, respectively.
The Weil-Petersson metric $||\cdot||_{WP}$ for $\mathcal{M}_{\mathcal{G}}^{1}$
is indeed conformal to the pressure metric $||\cdot||_{P}$ and the
scaling function could be thought of a volume term of $\mathcal{G}$.
Through normalizing the pressure metric $||\cdot||_{P}$ by the volume
term of $\mathcal{G}$ , the resulting metric $||\cdot||_{WP}$ is
a closer analogy to McMullen's definition of the Weil-Petersson metric
on Teichmüller space and the definition of the pressure metric on
Hitchin components (cf. Theorem \ref{thm: pressue =00003D wp} and
Section\ref{sec:Pressure-metrics-on-graphs} for more details). Moreover,
in Section \ref{sub: pressure type on gh and formula}, we propose
several formulas and properties of pressure type metrics, which help
to illustrate the usefulness of the definition.}

\textsf{Using formulas that we get in Section \ref{sub:From-undirected-graphs},
we discuss four typical metric graphs: a figure 8 graph, a belt buckle,
a dumbbell, and a three-petal rose. The first three graphs share the
same fundamental group $F_{2}$ (the free group of rank 2). This makes
an interesting connection with the }\textit{outer space}\textsf{ \cite{Culler:1986jm}
in rank 2. For brevity, throughout this note, we denote the figure
8 graph, belt buckle, dumbbell, and the three-petal rose by $\mathcal{G}_{1}$,
$\mathcal{G}_{2}$, $\mathcal{G}_{3}$, and $\mathcal{G}_{4}$, respectively.
We summarize several results in Section \ref{sec:Examples} in the
following. }

\begin{prop*}
[Proposition \ref{prop: fig 8 press incomplete}, \ref{prop: fig 8 WP complete },  \ref{prop: Belt buckle Pressure}, \ref{prop: Dumbbell Pressure}, and Observation \ref{obs: Belt buckle WP}, \ref{obs: Dumbbell WP}, \ref{obs: 3-rose Pressure},
\ref{obs: 3-rose-WP}]-

\begin{enumerate}

\item There exist examples of graphs for which the metric $||\cdot||_{P}$
is not complete.

\item There exist examples of graphs for which the curvature of the
metric $||\cdot||_{P}$ is positive.

\item There exist examples of graphs for which the metric $||\cdot||_{WP}$
is complete.

\item There exist examples of graphs for which the curvature of the
metric $||\cdot||_{WP}$ take positive and negative values.

\end{enumerate}
\end{prop*}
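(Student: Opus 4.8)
The plan is to establish each of the four assertions by exhibiting an explicit metric graph and carrying out a direct computation with the formulas for $\|\cdot\|_{P}$ and $\|\cdot\|_{WP}$ developed in Section \ref{sub:From-undirected-graphs}. Since every clause of the statement is an existence claim, it suffices to produce, for each, a single witnessing graph among $\mathcal{G}_1,\dots,\mathcal{G}_4$ and to verify the asserted property. Concretely, I would use the figure-$8$ graph $\mathcal{G}_1$ for the completeness statements (1) and (3), and the two-dimensional examples --- the belt buckle $\mathcal{G}_2$, the dumbbell $\mathcal{G}_3$, and the three-petal rose $\mathcal{G}_4$ --- for the curvature statements (2) and (4).

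First I would fix the dynamical encoding. Each graph determines a subshift of finite type on its set of directed edges, with a transition $e_i\to e_j$ permitted exactly when the head of $e_i$ is the tail of $e_j$ and $e_j\neq\overline{e_i}$ (no backtracking). Writing $x_e=e^{-l(e)}$, the weighted transition matrix $M(l)_{ij}=A_{ij}\,x_{e_j}$ governs the thermodynamics: its Perron eigenvalue is $e^{P(-l)}$, so the entropy-$1$ condition $h(l)=1$ is exactly $\lambda_{\mathrm{Perron}}(M(l))=1$. This single scalar equation cuts $\mathcal{M}_{\mathcal{G}}^{1}$ out of the space of edge weightings and furnishes explicit global coordinates: one free parameter for $\mathcal{G}_1$ (two edges, one constraint) and two for $\mathcal{G}_2,\mathcal{G}_3,\mathcal{G}_4$ (three edges, one constraint). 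The measure of maximal entropy $\mu$, i.e.\ the equilibrium state of $-l$, is read off from the left and right Perron eigenvectors of $M(l)$ at eigenvalue $1$, and both metrics are then expressed through $\mu$: $\|\dot l\|_{P}^2$ as the thermodynamic variance $\mathrm{Var}_\mu(\dot l)$ of the tangent direction and $\|\dot l\|_{WP}^2=\mathrm{Var}_\mu(\dot l)/\!\int l\,d\mu$, the conformal rescaling by the volume term recorded in Theorem \ref{thm: pressue =00003D wp}.

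For (1) and (3) I would work on $\mathcal{G}_1$. By the symmetry interchanging each loop with its reverse, the Perron problem reduces to a $2\times 2$ eigenvalue equation, and $\lambda_{\mathrm{Perron}}=1$ becomes the explicit curve $x+y+3xy=1$ with $x=e^{-\ell_1},\,y=e^{-\ell_2}$, whose ends are the degenerations $\ell_1\to 0$ (so $\ell_2\to\infty$) and $\ell_1\to\infty$ (so $\ell_2\to 0$). On a one-dimensional moduli space the curvature vanishes identically, so completeness is the only issue, and it is decided by whether the arclength integral toward a boundary point converges. I would parametrize the curve, expand $\mathrm{Var}_\mu(\dot l)$ and the volume term $\int l\,d\mu$ near the degeneration, and show that the $\|\cdot\|_{P}$-integral converges while division by the vanishing volume term makes the $\|\cdot\|_{WP}$-integral diverge; this yields incompleteness of $\|\cdot\|_{P}$ and completeness of $\|\cdot\|_{WP}$.

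For (2) and (4) I would pass to the two-dimensional examples, compute the metric tensor $(g_{ij})$ in the coordinates above by differentiating the Perron eigenvalue equation implicitly to obtain $\mu$ and the covariances of the coordinate length functions, and then evaluate the Gaussian curvature. Positivity of the curvature of $\|\cdot\|_{P}$ at some point (claim (2)) is obtained by evaluating the resulting expression for $\mathcal{G}_2$, $\mathcal{G}_3$, or $\mathcal{G}_4$, and the mixed signs for $\|\cdot\|_{WP}$ (claim (4)) follow from the standard conformal-change identity $K_{WP}=\phi^{-1}\!\left(K_P-\tfrac12\,\Delta_P\log\phi\right)$, with $\phi$ the volume term, evaluated at two points where the correction term tips the sign in opposite directions. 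The main obstacle is precisely this last layer of computation: the Perron eigenvalue of $\mathcal{G}_2,\mathcal{G}_3,\mathcal{G}_4$ is a root of a cubic or quartic, so the entries of $(g_{ij})$, their two derivatives, and the Laplacian correction are transcendental functions of the edge lengths, and the delicate point is to organize them so that the sign of the curvature --- and, for the Weil-Petersson metric, an actual change of sign --- can be certified at explicit points rather than merely estimated.
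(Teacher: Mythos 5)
Your plan for items (1)--(3) is essentially the paper's own argument. The encoding (directed edges, no backtracking, Perron eigenvalue $1$), the figure-8 reduction --- your curve $x+y+3xy=1$ in the variables $e^{-\ell_i}$ is exactly the paper's constraint $e^{-y}=\frac{1-e^{-x}}{1+3e^{-x}}$ --- and the convergence/divergence test for arclength near the two degenerations are precisely what is done in Propositions \ref{prop: fig 8 press incomplete} and \ref{prop: fig 8 WP complete }; your mechanism (the volume term $\int l\,\mathrm{d}m_{-l}$ vanishes at both ends, so dividing by it turns the convergent $\|\cdot\|_{P}$-integrals into divergent $\|\cdot\|_{WP}$-integrals) is consistent with the asymptotics $\|\dot c\|_{WP}^{2}\asymp\frac{-1}{x^{2}\log x}$ and $\asymp\frac{1}{x}$ obtained there. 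For (2) the paper likewise computes the first fundamental form in the coordinates $(x,y)$ and applies the Brioschi formula; note, however, that the statement asserts the examples are positively curved, and the paper proves $K_{P}>0$ on the \emph{whole} moduli space (for the dumbbell, $K_{P}=\frac{2e^{x+y}-1}{4e^{x+y}-3e^{x}-3e^{y}+2}$, manifestly positive on the admissible region), so certifying positivity only ``at some point'', as you propose, establishes a weaker statement than Propositions \ref{prop: Belt buckle Pressure} and \ref{prop: Dumbbell Pressure}. A further caution: when you extract the metric tensor from ``covariances of the coordinate length functions'', you must use the dynamical variance, e.g. $\mathrm{Var}(-\dot l_{0},m_{-l_{0}})=\int\ddot l_{0}\,\mathrm{d}m_{-l_{0}}$ or $\int(\dot g_{0})^{2}\mathrm{d}m_{g_{0}}$ for the normalized cocycle $g_{0}$ as in Proposition \ref{Prop: var formula graph}; the naive covariance of $\dot l$ under $m_{-l}$ is wrong for non-normalized locally constant functions, and conflating the two is exactly the Pollicott--Sharp error this paper corrects (Remark \ref{rem: mistake in PS}).

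The genuine gap is in (4): your conformal-change identity is misstated. Since $g_{WP}=V^{-1}g_{P}$ with $V=\int l\,\mathrm{d}m_{-l}$, writing $g_{WP}=e^{2u}g_{P}$ with $u=-\frac{1}{2}\log V$ gives
\[
K_{WP}=V\left(K_{P}+\tfrac{1}{2}\,\Delta_{g_{P}}\log V\right),
\]
so the prefactor is $V$, not $V^{-1}$, and the Laplacian correction enters with a plus sign. As written, your formula misreports both the magnitude and, potentially, the sign of $K_{WP}$ precisely in the regime where the correction term competes with $K_{P}>0$ --- and detecting that sign change is the entire content of (4). With the identity corrected, your route is viable and in fact more principled than the paper's, which supports (4) only through Observations \ref{obs: Dumbbell WP} and \ref{obs: 3-rose-WP}: explicit Brioschi expressions evaluated numerically by Mathematica at sample points, offered as evidence rather than proof. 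Either way, the final step --- certifying the sign of a transcendental expression at explicit points --- remains, as you yourself note, the real burden, and your proposal does not yet discharge it.
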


\begin{center}

\begin{tabular}{|c|c|c|c|c|}
\hline 
 & \includegraphics[scale=0.3]{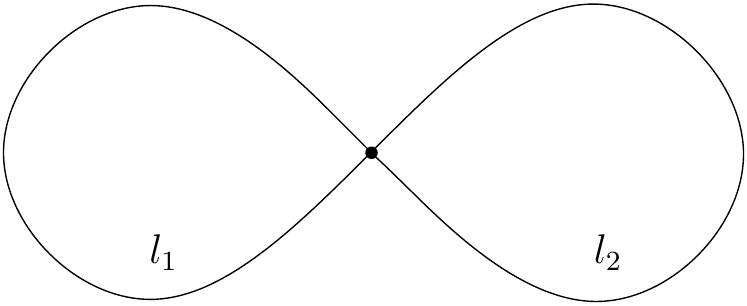} & \includegraphics[scale=0.3]{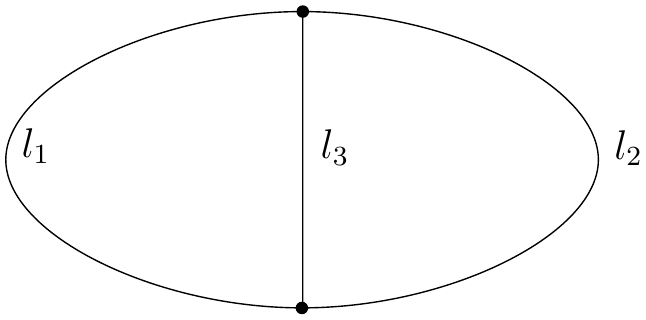} & \includegraphics[scale=0.4]{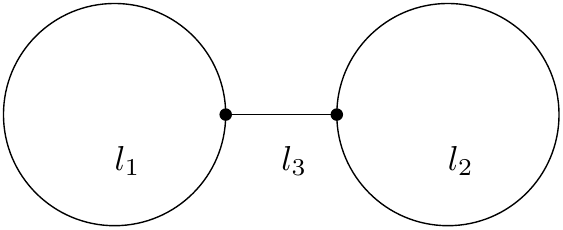} & \includegraphics[scale=0.2]{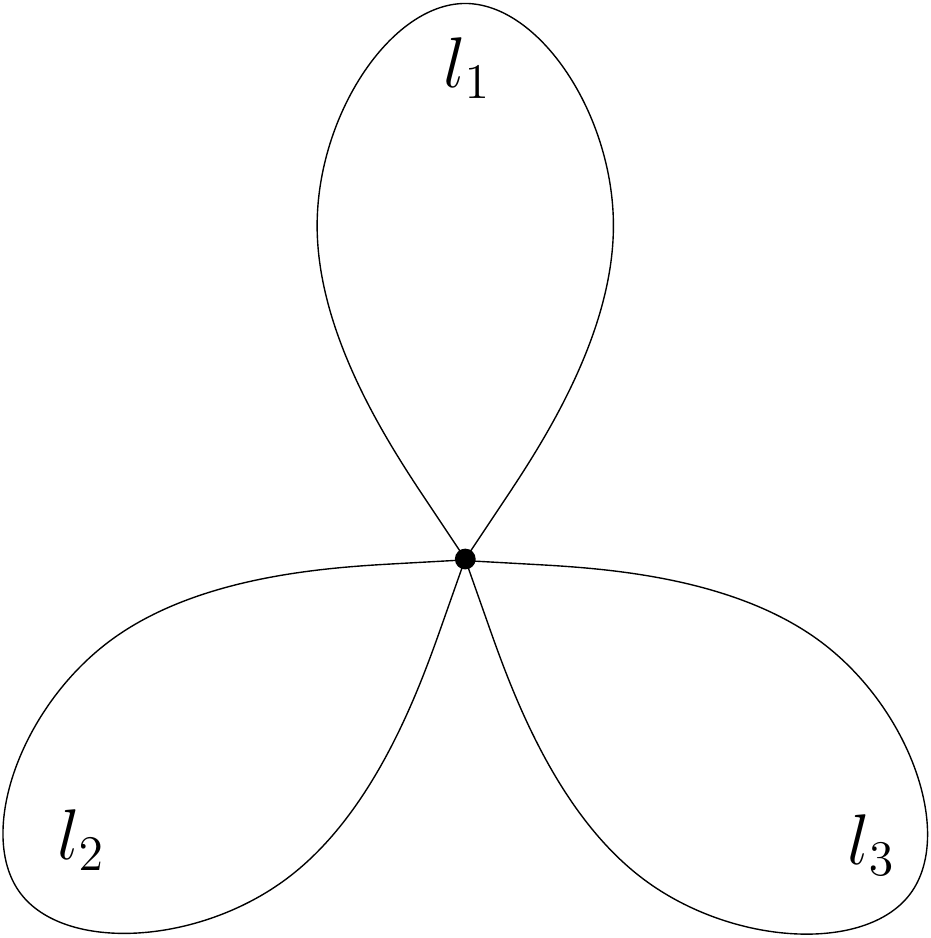}\tabularnewline
\hline 
 & Figure 8 & Belt buckle & Dumbbell & Three-petal rose\tabularnewline
\hline 
\hline 
$||\cdot||_{P}$ & incomplete & incomplete & incomplete & incomplete\tabularnewline
 &  & $0<C_{2}\leq K\leq C_{1}$ & $0<K$, unbounded & $0<C_{2}<K<C_{1}$\tabularnewline
\hline 
$||\cdot||_{WP}$ & complete & $C_{2}\leq K\leq C_{1}<0$ & $K$ takes $+$ and $-$ & $K$ takes $+$ and $-$\tabularnewline
\hline 
\end{tabular}

\end{center}

where $K$ denotes the Gaussian curvature.

\begin{rem}
\begin{enumerate}-

\item One need to be careful that our pressure metric $||\cdot||_{P}$
is the Weil-Petersson type metric defined in \cite{Pollicott:2014fs}.

\item One should compare these results with results in Pollicott-Sharp's
paper \cite{Pollicott:2014fs}. Although, Pollicott-Sharp and we are
working on the same examples, the results are not quite match. It
is because Lemma 3.3 in \cite{Pollicott:2014fs} (the main formula
for calculating the pressure metric used in \cite{Pollicott:2014fs}
) is true only if we input an extra condition that $f$ is normalized
with respect to the transfer operator (i.e. $\mathcal{L}_{f}1=1$).
For more details, one can check to our Remark \ref{rem: mistake in PS}.

\item The difference between \textbf{Proposition}s and \textbf{Observation}s
is that for \textbf{Proposition}s we give proofs and for \textbf{Observation}s
we give computer computation evidences. The reason that we skip proofs
for \textbf{Observation}s is because computations of the curvature
with respect to $||\cdot||_{WP}$ and $||\cdot||_{P}$ are not hard
but tedious and complex, which make no sense to analyze them in great
detail rather than using the help from computers.

\item It is surprising that for all of our examples the sectional
curvatures with respect to the pressure metric $||\cdot||_{P}$ are
positive.

\end{enumerate}
\end{rem}

\textsf{From the above proposition, we can see that both the pressure
metric and the Weil-Petersson metric for metric graphs shares some
common features with the Weil-Petersson metric on Teichmüller space.
However, through working on these explicit examples, we can conclude
that, from a pure dynamical setting, the pressure type metrics for
metric graphs don't behave like the Weil-Petersson metric on Teichmüller
space. In other words, the topology and the geometry of closed surfaces
force the pressure metric to be the Weil-Petersson metric on Teichmüller
space, but for graphs, without these special structures, the pressure
type metrics cannot reflect these well-known features in the Weil-Petersson
geometry.}

\textsf{This note is organized as the following. In the sake of completeness,
we will review from basic knowledge of the symbolic dynamics, thermodynamic
formalism and Teichmüller theory in Section \ref{sec:Preliminaries}.
In Section \ref{sec:Pressure-metrics-on-graphs}, we relate the deformation
space of metric graphs with a subshift of finite type and construct
the pressure type metrics for metric graph via this relation. Furthermore,
we give several useful formulas of them. In the last section, we discuss
the pressure (type) metric geometry through working on explicit examples. }

\subsection*{Acknowledgments}

\textsf{The author is extremely grateful to his Ph.D. advisor Prof.
François Ledrappier and Prof. Mark Pollicott. They lead the author
into the beautiful world of ergodic theory and dynamical systems,
especially the thermodynamic formalism. This work would have never
been possible without their support, guidance, and sharing of their
insightful ideas. }

\section{Preliminaries\label{sec:Preliminaries}}

\subsection{Symbolic dynamics and the thermodynamic formalism}

\textsf{We begin by recalling definitions and facts in symbolic dynamical
systems and the thermodynamic formalism. An excellent reference on
these two topics (concerning our approach) is the book \cite{Parry:1990tn}
written by Parry and Pollicott. }

\textsf{A $k\times k$ matrix $A$ is called }\textit{irreducible}\textsf{,
if for each pair $(i,j)$, $1\leq i,j\leq k$, there exists $n\geq1$
such that $A^{n}(i,j)>0$, where $A^{n}$ is an $n-$fold product
of $A$ with itself. Suppose $A$ is an irreducible matrix consisting
of $0$ and $1$, we define 
\[
\Sigma_{A}^{+}=\left\{ \underline{x}=(x_{n})_{n\in\mathbb{N}}\in\{1,2,..,k\}^{\mathbb{N}};A(x_{n},x_{n+1})=1\right\} .
\]
We consider the shift map $\sigma:\Sigma_{A}^{+}\to\Sigma_{A}^{+}$
by $\left(\sigma(\underline{x})\right)_{n}=x_{n+1}$, and then call
$(\Sigma_{A}^{+},\sigma)$ a }\textit{(one-sided) subshift of finite
type}\textsf{. We notice that $\Sigma_{A}^{+}$ is a compact zero
dimensional space with respect to the Tychonoff product topology,
and we can endow a metric $d$ on $\Sigma_{A}^{+}$ . More precisely,
this space is compact with respect to the metric 
\[
d(\underline{x},\underline{y})={\displaystyle \sum_{n=0}^{\infty}\frac{1-\delta(x_{n},y_{n})}{2^{n}}},
\]
where where $\delta(i,j)$ is the standard Kronecker delta.}
\begin{rem}
$A$ is irreducible implies that $(\Sigma_{A}^{+},\sigma)$ is (topological)
transitive (i.e. $\sigma:\Sigma_{A}^{+}\to\Sigma_{A}^{+}$ has a dense
orbit). 
\end{rem}
\textsf{We denote the set of continuous functions on $\Sigma_{A}^{+}$
by $C(\Sigma_{A}^{+})$. A function $f:\Sigma_{A}^{+}\to\real$ is
called }\textit{$\alpha-$Hölder continuous}\textsf{ if there exists
$C>0$ and $\alpha\in(0,1]$ such that for all $\underline{x},\underline{y}\in\Sigma_{A}^{+}$
we have $\left|f(\underline{x})-f(\underline{y})\right|\leq Cd(\underline{x},\underline{y})^{\alpha}$,
and we denote by $C^{\alpha}(\Sigma_{A}^{+})$ the space of $\alpha-$Hölder
continuous functions on $\Sigma_{A}^{+}$. We call a function $f:\Sigma_{A}^{+}\to\real$
is }\textit{Hölder continuous}\textsf{ if it is }\textit{$\alpha-$Hölder
continuous}\textsf{ for some $\alpha$. Two function $f,g\in C(\Sigma_{A}^{+})$
are called }\textit{Livšic cohomologous}\textsf{ ($f\sim g$), if
there exists $h\in C(\Sigma_{A}^{+})$ such that 
\[
f-g=h\circ\sigma-h.
\]
}

\subsubsection{Transfer operator, pressure and variance}

\textsf{Most of the following definitions could be generalized to
continuous functions $C(\Sigma_{A}^{+})$ on $\Sigma_{A}^{+}$. However,
in this note we don't need such a generality, so we will only focus
on Hölder continuous functions on $\Sigma_{A}^{+}$. }
\begin{defn}
The \textit{transfer operator (or Ruelle operator)} $\mathcal{L}_{f}:C(\Sigma_{A}^{+})\to\real$
of a Hölder continuous function $f$ is defined by 
\[
(\mathcal{L}_{f}w)(\underline{x})=\sum_{\underline{y}=\sigma^{-1}(\underline{x})}e^{f(\underline{y})}w(\underline{y}).
\]

\end{defn}

\begin{thm}
[Ruelle-Perron-Frobenius Theorem, Theorem 2.2 \cite{Parry:1990tn}]
\label{thm: RPF}Let $f$ be a Hölder continuous function on $\Sigma_{A}^{+}$
and suppose $A$ is irreducible. \begin{enumerate}

\item There is a simple maximal positive eigenvalue $\beta_{f}$
of $\mathcal{L}_{f}:C(\Sigma_{A}^{+})\to C(\Sigma_{A}^{+})$ with
a corresponding strictly positive eigenfunction $v_{f}\in C^{\alpha}(\Sigma_{A}^{+}),$
and $\beta_{f}$ realizes the spectral radius of $\mathcal{L}_{f}:C(\Sigma_{A}^{+})\to C(\Sigma_{A}^{+})$.

\item There exists a unique probability measure $\mu_{f}$ such that
$\mathcal{L}_{f}^{*}\mu_{f}=\beta_{f}\mu_{f}$. i.e. 
\[
\int\mathcal{L}_{f}\phi\mathrm{d}\mu_{f}=\beta_{f}\int\phi\mathrm{d}\mu_{f}\mbox{ for all }\phi\in C(\Sigma_{A}^{+}).
\]

\end{enumerate}
\end{thm}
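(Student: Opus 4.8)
The plan is to treat the two assertions separately: I would first produce the eigenmeasure of part (2) by a soft fixed-point argument, and then use it to construct and control the strictly positive eigenfunction of part (1) by a positivity/contraction argument, deducing simplicity and the spectral-radius claim at the end.

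For part (2), I would realize $\mu_f$ as a fixed point of the normalized dual operator on the space of Borel probability measures. Since $\mathcal{L}_f$ is a positive operator and, by irreducibility, $\mathcal{L}_f \mathbf{1} > 0$ everywhere (every $\underline{x}$ has at least one $\sigma$-preimage), the map $\nu \mapsto \mathcal{L}_f^{*}\nu / (\mathcal{L}_f^{*}\nu)(\mathbf{1})$ carries the weak$^{*}$-compact convex set $\mathcal{P}(\Sigma_A^+)$ continuously into itself. The Schauder--Tychonoff fixed point theorem then yields a $\mu_f$ with $\mathcal{L}_f^{*}\mu_f = \beta_f \mu_f$, where $\beta_f := \int \mathcal{L}_f \mathbf{1}\, \mathrm{d}\mu_f > 0$. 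Uniqueness of $\mu_f$ I would defer until the contraction estimate is in hand.

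For part (1), after passing to the normalized operator $\widehat{\mathcal{L}} = \beta_f^{-1}\mathcal{L}_f$ (so that $\widehat{\mathcal{L}}^{*}\mu_f = \mu_f$), I would search for a fixed ray inside a cone of positive, suitably regular functions, namely
\[
\mathcal{C}_{b} = \{\, w \in C(\Sigma_A^+) : w > 0,\ w(\underline{x}) \le w(\underline{y})\, e^{b\, d(\underline{x},\underline{y})^{\alpha}} \text{ whenever } x_0 = y_0 \,\}.
\]
The crucial computation is a bounded-distortion estimate. Writing
\[
\widehat{\mathcal{L}}^{n} w(\underline{x}) = \beta_f^{-n}\sum_{\sigma^{n}\underline{y} = \underline{x}} e^{S_{n}f(\underline{y})}\, w(\underline{y}), \qquad S_{n}f = \sum_{k=0}^{n-1} f\circ \sigma^{k},
\]
the $\alpha$-H\"older continuity of $f$ makes the variations $|S_{n}f(\underline{y}) - S_{n}f(\underline{y}')|$ over the paired $\sigma^{n}$-preimages of two points $\underline{x},\underline{x}'$ agreeing in their first coordinate summable as a geometric series in $2^{-\alpha}$, hence bounded uniformly in $n$. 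This shows that for $b$ large enough $\widehat{\mathcal{L}}$ maps $\mathcal{C}_{b}$ into itself and, by Birkhoff's theorem, contracts the Hilbert projective metric on $\mathcal{C}_{b}$ by a factor strictly less than $1$. A unique fixed ray then exists, furnishing the strictly positive H\"older eigenfunction $v_f \in C^{\alpha}(\Sigma_A^+)$ with $\mathcal{L}_f v_f = \beta_f v_f$.

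Finally, the strict contraction delivers the remaining claims: normalizing $\int v_f\, \mathrm{d}\mu_f = 1$, the iterates $\beta_f^{-n}\mathcal{L}_f^{n} w$ converge to $v_f \int w\, \mathrm{d}\mu_f$ exponentially on $C^{\alpha}(\Sigma_A^+)$, which forces $\beta_f$ to be a simple eigenvalue with one-dimensional eigenspace and shows that no eigenvalue can exceed $\beta_f$ in modulus, so $\beta_f$ realizes the spectral radius. Pairing this convergence against any second eigenmeasure $\nu$ with $\mathcal{L}_f^{*}\nu = \lambda\nu$ forces $\lambda = \beta_f$ and $\nu = \mu_f$, giving the uniqueness left open in part (2). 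The main obstacle is precisely the uniform-in-$n$ bounded-distortion estimate together with the verification that the induced projective contraction is strict: this single analytic input is where the H\"older hypothesis and irreducibility are genuinely used, and it is what simultaneously yields the regularity of $v_f$, the spectral gap, simplicity, and the uniqueness of $\mu_f$.
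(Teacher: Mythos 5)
The paper itself gives no proof of this statement---it is quoted verbatim as Theorem 2.2 of Parry--Pollicott---so your attempt can only be measured against the standard argument. Your architecture (Schauder--Tychonoff applied to $\nu\mapsto\mathcal{L}_f^{*}\nu/(\mathcal{L}_f^{*}\nu)(\mathbf{1})$ for the eigenmeasure, a Birkhoff cone with a bounded-distortion estimate for the eigenfunction, then simplicity, spectral radius and uniqueness extracted from the contraction) is indeed the textbook route, and the distortion estimate you sketch is correct. But there is a genuine gap: everything in your third and fourth paragraphs requires $A$ to be \emph{aperiodic}, whereas the theorem is stated for $A$ merely \emph{irreducible}, and the step that fails is exactly the one you identify as the ``single analytic input.''

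If $A$ is irreducible with period $p\ge 2$, the alphabet splits into cyclic classes $S_0\to S_1\to\cdots\to S_{p-1}\to S_0$ and $\widehat{\mathcal{L}}$ moves mass cyclically among them. Consequently the image $\widehat{\mathcal{L}}(\mathcal{C}_b)$ has \emph{infinite} diameter in the Hilbert projective metric (ratios of a cone element across different classes are arbitrary and are merely permuted by $\widehat{\mathcal{L}}$), so Birkhoff's theorem yields no contraction factor strictly less than $1$, and the exponential convergence $\beta_f^{-n}\mathcal{L}_f^{n}w\to v_f\int w\,\mathrm{d}\mu_f$ you use to conclude is simply false. Concretely, take $A=\left(\begin{smallmatrix}0&1\\1&0\end{smallmatrix}\right)$ and $f=0$: then $\Sigma_A^+$ is two points which $\mathcal{L}_0$ swaps, $\beta_f=1$, the iterates $\mathcal{L}_0^{n}w$ oscillate for any $w$ with two distinct values, and $-1$ is an eigenvalue of the same modulus, so there is no spectral gap---even though the theorem's actual conclusions (simple positive eigenvalue realizing the spectral radius, unique eigen-probability-measure) do still hold. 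The repair is the standard one: run your cone argument for aperiodic matrices, then treat irreducible $A$ by passing to $\sigma^{p}$ and $\mathcal{L}_f^{p}$, which is aperiodic on each cyclic class, and reassemble (e.g.\ $v_f:=\sum_{k=0}^{p-1}\beta_f^{-k}\mathcal{L}_f^{k}v$ for the eigenfunction $v$ of $\mathcal{L}_f^{p}$, with $\beta_f$ the $p$-th root of the class eigenvalue); the spectral-radius and simplicity claims must then be rederived without invoking convergence of $\beta_f^{-n}\mathcal{L}_f^{n}$, for instance from $c\,v_f\le\mathbf{1}\le C\,v_f$, which gives $\mathcal{L}_f^{n}\mathbf{1}\asymp\beta_f^{n}$. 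This reduction is exactly how Parry--Pollicott deduce their Theorem 2.2 (irreducible case) from their Theorem 2.1 (aperiodic case).
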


\textsf{By the Ruelle-Perron-Frobenius theorem, we are now ready to
define the pressure. }

\begin{defn}
For every $f\in C^{\alpha}(\Sigma_{A}^{+})$, the \textit{pressure
of $f$,} $P(f)$, is the log of the spectral radius of $\mathcal{L}_{f}$.
i.e. 
\[
P(f)=\log\beta_{f},
\]
 where $\beta_{f}$ is a simple maximal positive eigenvalue of $\mathcal{L}_{f}$
given in Theorem \ref{thm: RPF} (the Ruelle-Perron-Frobenious theorem).
\end{defn}

\begin{rem}
One can prove that we can characterize the pressure via periodic points
of $(\Sigma_{A}^{+},\sigma)$. More precisely, for every $f\in C^{\alpha}(\Sigma_{A}^{+})$,
\[
P(f)=\lim_{n\to\infty}\frac{1}{n}\log\left(\sum_{\sigma^{n}\underline{x}=\underline{x}}e^{f^{n}(\underline{x})}\right),
\]
where $f^{n}(\underline{x})=f(\underline{x})+f(\sigma\underline{x})+...+f(\sigma^{n-1}\underline{x}).$ 
\end{rem}

\begin{defn}
Let $f$ be a Hölder continuous function, and suppose $P(f)=0$. We
define the\textit{ equilibrium state} of $f$ to be the measure 
\[
m_{f}=\frac{v_{f}\cdot\mu_{f}}{\int v_{f}\mathrm{d}\mu_{f}},
\]
 where $\mu_{f}$ is the unique probablity measure associated with
$f$ given in Theorem \ref{thm: RPF} (the R-P-F theorem).
\end{defn}

\textsf{We notice that the equilibrium state is an ergodic, $\sigma$-invarant
probability measure with positive entropy, and it is unique up to
Livšic cohomology for Hölder continuous functions.}

\begin{defn}
The \textit{variance} of $w\in C(\Sigma_{A}^{+})$ with respect to
a $\sigma-$invariant probablity measure $\mu$ is defined by 

\[
\mathrm{Var}(w,\mu)=\lim_{n\to\infty}\frac{1}{n}\int\left(w^{n}(\underline{x})-n\int w\mathrm{d}\mu\right)^{2}\mbox{d}\mu(\underline{x}),
\]
where $w^{n}(\underline{x})={\displaystyle \sum_{i=0}^{n-1}w\circ\sigma^{i}(\underline{x}).}$
\end{defn}

\textsf{In this note, we only consider variances with respect to equilibrium
states, which simplifies the discussion. In the sequel, we list first
recall some facts and useful formulas in calculating variances with
respect to equilibrium states. }

\begin{thm}
[Lemma 3.4 \cite{Jordan:2007dk}] \label{thm:(Jordan-Pollicott-)}

Suppose $f\in C^{\alpha}(\Sigma_{A}^{+})$, $\mathcal{L}_{f}1=1$
and $\int w\mbox{\ensuremath{\mathrm{d}}}m_{f}=0$, then 
\[
\mathrm{Var}(w,m_{f})=\int_{\Sigma_{A}^{+}}w^{2}\mathrm{d}m_{f},
\]
 where $m_{f}$ is the equilibrium state with respect to $f$. 
\end{thm}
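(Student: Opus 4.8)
The plan is to reduce the asymptotic variance to a sum of correlation coefficients and then to exploit the normalization $\mathcal{L}_f 1 = 1$ to annihilate all the cross terms. First I would record what the normalization buys us: since $\mathcal{L}_f 1 = 1$, the constant function $1$ is a strictly positive eigenfunction with eigenvalue $1$, so by Theorem \ref{thm: RPF} we have $\beta_f = 1$, $v_f \equiv 1$, and $P(f) = 0$. Consequently the equilibrium state is $m_f = \mu_f$, it is $\sigma$-invariant, and it satisfies the invariance relation $\int \mathcal{L}_f\phi\,\mathrm{d}m_f = \int \phi\,\mathrm{d}m_f$ for all $\phi\in C(\Sigma_A^+)$. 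Because $\int w\,\mathrm{d}m_f = 0$, the defining expression for the variance collapses to $\mathrm{Var}(w,m_f) = \lim_{n\to\infty}\tfrac1n\int (w^n)^2\,\mathrm{d}m_f$.

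Next I would expand the square. Writing $w^n = \sum_{i=0}^{n-1} w\circ\sigma^i$ and using the $\sigma$-invariance of $m_f$, each term $\int (w\circ\sigma^i)(w\circ\sigma^j)\,\mathrm{d}m_f$ depends only on $|i-j|$, so that $\int (w^n)^2\,\mathrm{d}m_f = n\rho(0) + 2\sum_{k=1}^{n-1}(n-k)\rho(k)$ with $\rho(k)=\int w\,(w\circ\sigma^k)\,\mathrm{d}m_f$. Dividing by $n$ and letting $n\to\infty$, the tail being controlled by the exponential decay of correlations coming from the spectral gap in Theorem \ref{thm: RPF}, yields the Green--Kubo expression
\[
\mathrm{Var}(w,m_f) = \int w^2\,\mathrm{d}m_f + 2\sum_{k=1}^{\infty}\rho(k), \qquad \rho(k) = \int w\,(w\circ\sigma^k)\,\mathrm{d}m_f .
\]
Thus the theorem is equivalent to the vanishing of every cross-correlation $\rho(k)$ for $k\ge 1$.

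To handle the $\rho(k)$ I would pass to the transfer operator via the pull-out identity $\mathcal{L}_f\big((\phi\circ\sigma)\,\psi\big) = \phi\,\mathcal{L}_f\psi$. Combined with the invariance of $m_f$ this produces the adjoint relation, and iterating it $k$ times rewrites the correlations as
\[
\int(\phi\circ\sigma)\,\psi\,\mathrm{d}m_f = \int \phi\,\mathcal{L}_f\psi\,\mathrm{d}m_f, \qquad \rho(k) = \int w\,\mathcal{L}_f^{\,k}w\,\mathrm{d}m_f .
\]
The hard part, and the step where the normalization $\mathcal{L}_f 1 = 1$ is indispensable, is to show $\mathcal{L}_f w = 0$: once this is granted, $\mathcal{L}_f^{\,k}w = \mathcal{L}_f^{\,k-1}(\mathcal{L}_f w) = 0$ forces $\rho(k)=0$ for all $k\ge1$ and collapses the Green--Kubo sum to $\int w^2\,\mathrm{d}m_f$. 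I would emphasize that this identity $\mathcal{L}_f w = 0$ is precisely what the normalization delivers in the situation where the lemma is applied: there $w$ is the derivative of a one-parameter family of normalized potentials, so differentiating $\mathcal{L}_{f_t}1\equiv 1$ in $t$ and using $\tfrac{\mathrm{d}}{\mathrm{d}t}\mathcal{L}_{f_t}\phi = \mathcal{L}_{f_t}(\dot f_t\,\phi)$ gives $\mathcal{L}_f w = 0$ at once. I would flag this as the genuine crux, since keeping only $\int w\,\mathrm{d}m_f=0$ and dropping the force of $\mathcal{L}_f 1=1$ leaves the terms $\rho(k)$ nonzero in general; this is exactly the normalization subtlety underlying Remark \ref{rem: mistake in PS}.
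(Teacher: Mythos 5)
The paper never proves this statement: it is quoted as Lemma 3.4 of Jordan--Pollicott and then invoked in Proposition \ref{Prop: var formula graph}, so there is no internal proof to compare yours against. Your reconstruction is the standard argument --- reduce to the Green--Kubo expansion, rewrite the correlations as $\rho(k)=\int w\,\mathcal{L}_f^k w\,\mathrm{d}m_f$ using the pull-out identity together with $\mathcal{L}_f^*m_f=m_f$, and kill them with $\mathcal{L}_f w=0$ --- and it is sound as far as it goes. One simplification is available: once $\mathcal{L}_f w=0$ is granted, you need no decay-of-correlations input at all (which, incidentally, the paper's version of Theorem \ref{thm: RPF} does not supply, since it omits the spectral gap); the cross terms vanish identically, so $\int (w^n)^2\,\mathrm{d}m_f=n\int w^2\,\mathrm{d}m_f$ exactly and the limit is trivial.

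More importantly, your diagnosis of the crux is exactly right, and it is worth stating bluntly: the theorem as transcribed in the paper is false under its literal hypotheses. With only $\mathcal{L}_f1=1$ and $\int w\,\mathrm{d}m_f=0$, take the full shift on two symbols $\{0,1\}$, $f\equiv-\log 2$ (so $\mathcal{L}_f1=1$ and $m_f$ is the Bernoulli $(\tfrac12,\tfrac12)$ measure), and $w=u\circ\sigma-u$ with $u(\underline{x})=x_0$: then $\int w\,\mathrm{d}m_f=0$ and $\mathrm{Var}(w,m_f)=0$, since coboundaries have zero variance, yet $\int w^2\,\mathrm{d}m_f=\tfrac12$. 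So the condition $\mathcal{L}_f w=0$ (equivalently, that $w$ arises as the derivative of a family of normalized potentials, as in the application where $\mathcal{L}_{g_t}1\equiv1$ forces $\mathcal{L}_{g_0}\dot{g}_0=0$) is not a presentational choice but a necessary additional hypothesis; note it also subsumes the mean-zero condition, since $\int w\,\mathrm{d}m_f=\int\mathcal{L}_f w\,\mathrm{d}m_f$. This is precisely the normalization subtlety the paper itself flags in Remark \ref{rem: mistake in PS} concerning Pollicott--Sharp's Lemma 3.3, and your argument shows it infects the paper's own statement of Theorem \ref{thm:(Jordan-Pollicott-)} as well. The paper's actual use of the theorem in Proposition \ref{Prop: var formula graph} remains valid, because there $w=\dot{g}_0$ does satisfy $\mathcal{L}_{g_0}\dot{g}_0=0$, exactly as you observe.
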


\textsf{We also recall serval useful properties of the pressure. }

\begin{thm}
[Analyticity of the Pressure,  Prop. 4.7, 4.12 \cite{Parry:1990tn}]\label{Thm: analyticity of pressure}

If $f,g\in C^{\alpha}(\Sigma_{A}^{+})$ and if $m_{f}$ is the equilibrium
state of $f$, then \begin{enumerate}

\item The function $t\to P(f+tg)$ is analytic.

\item $\mathrm{Var}(g,m_{f})=0$ if and only if $g$ is Livšic cohomologous
to a constant.

\end{enumerate}
\end{thm}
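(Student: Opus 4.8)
The plan is to prove the two assertions separately: the analyticity in (1) via analytic perturbation theory of the transfer operator, and the variance dichotomy in (2) via a Gordin-type martingale--coboundary decomposition combined with the variance formula of Theorem \ref{thm:(Jordan-Pollicott-)}.

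For the analyticity statement (1), I would regard $t\mapsto\mathcal{L}_{f+tg}$ as a family of bounded operators on the Banach space $C^\alpha(\Sigma_A^+)$. Since the weight $e^{f+tg}=e^f\sum_{n\ge0}t^n g^n/n!$ is an absolutely convergent power series in the H\"older norm, this family is operator-norm analytic in $t$. By the Ruelle--Perron--Frobenius Theorem (Theorem \ref{thm: RPF}), for each fixed $t_0$ the leading eigenvalue $\beta_{f+t_0g}$ is simple and strictly dominates the rest of the spectrum, so there is a spectral gap. The associated rank-one spectral projection can therefore be written as the contour integral $\Pi(t)=\frac{1}{2\pi i}\oint(z-\mathcal{L}_{f+tg})^{-1}\,dz$ around $\beta_{f+t_0g}$, which depends analytically on $t$ in a neighbourhood of $t_0$; consequently $\beta_{f+tg}=\mathrm{tr}\big(\mathcal{L}_{f+tg}\Pi(t)\big)$ is analytic there. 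As $\beta_{f+tg}>0$, the pressure $P(f+tg)=\log\beta_{f+tg}$ is analytic near each $t_0$, hence on all of $\real$.

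For (2) I would first differentiate the eigenvalue equation $\mathcal{L}_{f+tg}v_{f+tg}=\beta_{f+tg}v_{f+tg}$ using the analytic dependence established in (1) to obtain the standard formulas $\frac{d}{dt}P(f+tg)\big|_{t=0}=\int g\,dm_f$ and $\frac{d^2}{dt^2}P(f+tg)\big|_{t=0}=\mathrm{Var}(g,m_f)$, the variance being unchanged when $g$ is replaced by $g-\int g\,dm_f$. The reverse implication is then immediate: if $g=c+h\circ\sigma-h$ with $c\in\real$ and $h\in C^\alpha(\Sigma_A^+)$, then $\int g\,dm_f=c$ and $g^n-nc=h\circ\sigma^n-h$ is uniformly bounded, so $\frac{1}{n}\int(g^n-nc)^2\,dm_f\to0$ and $\mathrm{Var}(g,m_f)=0$.

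The forward implication is the main obstacle, and here is where the real work lies. After subtracting the mean I may assume $\int g\,dm_f=0$, and by replacing $f$ with $\hat f=f+\log v_f-\log(v_f\circ\sigma)-P(f)$ I may assume $\mathcal{L}_f1=1$; this normalization changes neither the equilibrium state $m_f$ nor $\mathrm{Var}(g,m_f)$. Because $\int g\,dm_f=0$ places $g$ in the complement of the leading eigenspace, the spectral gap from Theorem \ref{thm: RPF} gives $\|\mathcal{L}_f^n g\|_\alpha\le C\theta^n$ for some $\theta<1$, so $h:=\sum_{n\ge1}\mathcal{L}_f^n g$ converges in $C^\alpha(\Sigma_A^+)$. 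Setting $\hat g:=g+h-h\circ\sigma$ and using the identities $\mathcal{L}_f(h\circ\sigma)=h$ and $\mathcal{L}_f h=h-\mathcal{L}_f g$, a direct computation gives $\mathcal{L}_f\hat g=0$. Since coboundaries do not affect the variance, $\mathrm{Var}(g,m_f)=\mathrm{Var}(\hat g,m_f)$; and because $\mathcal{L}_f\hat g=0$ kills all correlations $\int\hat g\,(\hat g\circ\sigma^k)\,dm_f=\int\hat g\,\mathcal{L}_f^k\hat g\,dm_f=0$ for $k\ge1$, Theorem \ref{thm:(Jordan-Pollicott-)} yields $\mathrm{Var}(\hat g,m_f)=\int\hat g^2\,dm_f$. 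Thus $\mathrm{Var}(g,m_f)=0$ forces $\int\hat g^2\,dm_f=0$, and as $m_f$ has full support on the irreducible subshift $\Sigma_A^+$ and $\hat g$ is continuous, we conclude $\hat g\equiv0$. Therefore $g=h\circ\sigma-h$, i.e. the original $g$ is Liv\v{s}ic cohomologous to the constant $\int g\,dm_f$. I expect the delicate points to be justifying the operator-norm analyticity and spectral-gap estimates rigorously and confirming that the normalization leaves the variance invariant.
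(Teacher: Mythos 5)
The paper does not actually prove this theorem; it is quoted verbatim from Parry--Pollicott (Prop.\ 4.7 and 4.12 of \cite{Parry:1990tn}), so there is nothing internal to compare against. Your reconstruction is essentially the standard textbook argument behind that citation: analytic perturbation of the simple isolated eigenvalue of $\mathcal{L}_{f+tg}$ via the contour-integral projection for part (1), and a Gordin-type martingale--coboundary decomposition for part (2). The logic of part (2) is sound and, notably, you apply the variance formula of Theorem \ref{thm:(Jordan-Pollicott-)} only after arranging $\mathcal{L}_f\hat g=0$, which is exactly the hypothesis under which that formula is actually valid (the paper's own statement of it, like Lemma 3.3 of \cite{Pollicott:2014fs} which this paper corrects, suppresses the correlation terms; your correlation-killing step $\int\hat g\,(\hat g\circ\sigma^k)\,dm_f=\int\hat g\,\mathcal{L}_f^k\hat g\,dm_f=0$ is what makes the application legitimate). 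The reductions (normalizing $f$, subtracting the mean, invariance of the variance under coboundaries) are all correct.

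One caveat deserves attention. The spectral-gap estimate $\|\mathcal{L}_f^n g\|_\alpha\le C\theta^n$ on the mean-zero subspace, and likewise the claim that $\beta_{f+t_0g}$ ``strictly dominates the rest of the spectrum,'' require more than the paper's standing hypothesis that $A$ is irreducible: they require aperiodicity (topological mixing). For an irreducible but periodic $A$ (e.g.\ a two-state period-two shift) the peripheral spectrum of $\mathcal{L}_f$ on $C^\alpha(\Sigma_A^+)$ contains points $\beta_f\omega$ with $\omega^p=1$, $\omega\neq1$, so your Neumann series $h=\sum_{n\ge1}\mathcal{L}_f^ng$ need not converge, and the contour in part (1) must be drawn around $\beta_f$ alone rather than around the whole spectral circle. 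Both issues are repairable: $\beta_f$ is still a simple \emph{isolated} eigenvalue on $C^\alpha$ (so Kato perturbation theory applies to it directly), and since the peripheral eigenvalues other than $\beta_f$ are bounded away from $\beta_f$, the operator $I-\mathcal{L}_f$ is invertible on $\ker\Pi$ after normalization, so you may define $h:=(I-\mathcal{L}_f)^{-1}\mathcal{L}_fg$ instead of the series; the identities $\mathcal{L}_fh=h-\mathcal{L}_fg$ and $\mathcal{L}_f(h\circ\sigma)=h$ still hold and the rest of your argument goes through unchanged. Also note that all of this must take place on $C^\alpha(\Sigma_A^+)$, as you say: on $C(\Sigma_A^+)$ (where the paper states Theorem \ref{thm: RPF}) the essential spectral radius equals the spectral radius and there is no gap at all.
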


\textsf{By the the analyticity of the pressure, we are able to differentiate
the pressure, and more importantly derivatives of the pressure give
us some handy formulas. }

\begin{thm}
[Derivatives of the pressure, Prop. 4.10, 4.11 \cite{Parry:1990tn}, Theorem 2.2 \cite{McMullen:2008eh}]\label{thm:(Derivatives-of-the}

Let $\psi_{t}$ be a smooth path in $C^{\alpha}(\Sigma_{A}^{+})$,
$m_{0}=m_{\psi_{0}}$ be the equilibrium state of $\psi_{0}$, and
$\dot{\psi}_{0}:=\left.\frac{d\psi_{t}}{dt}\right|_{t=0}$. We then
have 
\begin{equation}
\left.\dfrac{dP(\psi_{t})}{dt}\right|_{t=0}=\int_{\Sigma_{A}^{+}}\dot{\psi}_{0}\mathrm{d}m_{0}\label{eq:1}
\end{equation}
 and, if the first derivative is zero, i.e. $\int_{\Sigma_{A}^{+}}\dot{\psi}_{0}m_{0}=0$,
then 
\begin{equation}
\left.\dfrac{d^{2}P(\psi_{t})}{dt^{2}}\right|_{t=0}=\mathrm{Var}(\dot{\psi}_{0},m_{0})+\int_{\Sigma_{A}^{+}}\ddot{\psi}_{0}\mathrm{d}m_{0}.\label{eq:2}
\end{equation}

\end{thm}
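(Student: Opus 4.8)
The plan is to reduce everything to first- and second-order perturbation of the leading eigenvalue $\beta_{\psi_t}$ of the transfer operator, since $P(\psi_t)=\log\beta_{\psi_t}$. First I would normalize: replacing the path $\psi_t$ by $\psi_t+\log v_{\psi_0}-\log(v_{\psi_0}\circ\sigma)-P(\psi_0)$ alters $P(\psi_t)$ only by the additive constant $-P(\psi_0)$ and changes neither $\dot\psi_0$, $\ddot\psi_0$, nor the equilibrium state $m_0$ (the added term is a coboundary plus a constant, both independent of $t$). Hence I may assume $\mathcal{L}_{\psi_0}1=1$, whence $\beta_{\psi_0}=1$, $v_{\psi_0}=1$, and the eigenmeasure of Theorem \ref{thm: RPF} is $m_0$ itself, satisfying $\int\mathcal{L}_{\psi_0}\phi\,\mathrm{d}m_0=\int\phi\,\mathrm{d}m_0$. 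By Theorem \ref{Thm: analyticity of pressure} the eigendata $\beta_t:=\beta_{\psi_t}$ and $v_t:=v_{\psi_t}$ depend analytically on $t$, and I normalize the eigenfunctions by $\int v_t\,\mathrm{d}m_0=1$, so that $\int\dot v_0\,\mathrm{d}m_0=\int\ddot v_0\,\mathrm{d}m_0=0$.

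Writing $\mathcal{L}_t:=\mathcal{L}_{\psi_t}$, the one computation driving everything is $\frac{d}{dt}\mathcal{L}_t[w]=\mathcal{L}_t[\dot\psi_t\,w]$, obtained by differentiating the defining sum over preimages. Differentiating the eigenvalue relation $\mathcal{L}_t v_t=\beta_t v_t$ once and setting $t=0$ gives $\mathcal{L}_0\dot\psi_0+\mathcal{L}_0\dot v_0=\dot\beta_0+\dot v_0$; integrating against $m_0$ and using $\mathcal{L}_0^{*}m_0=m_0$ cancels the $\dot v_0$ terms and leaves $\dot\beta_0=\int\dot\psi_0\,\mathrm{d}m_0$, which is (\ref{eq:1}) since $\beta_0=1$. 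Differentiating a second time, with $\frac{d^2}{dt^2}\mathcal{L}_t[w]=\mathcal{L}_t[((\dot\psi_t)^2+\ddot\psi_t)\,w]$, evaluating at $t=0$, and again integrating against $m_0$, I obtain $\ddot\beta_0=\int\ddot\psi_0\,\mathrm{d}m_0+\int(\dot\psi_0)^2\,\mathrm{d}m_0+2\int\dot\psi_0\,\dot v_0\,\mathrm{d}m_0$. The hypothesis $\int\dot\psi_0\,\mathrm{d}m_0=0$ forces $\dot\beta_0=0$, so differentiating $P=\log\beta$ twice gives $\ddot P_0=\ddot\beta_0$ with no correction term, and it remains only to recognize the two quadratic terms as a variance.

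This last identification is the crux. With $\dot\beta_0=0$ the first-order relation becomes $(\mathrm{Id}-\mathcal{L}_0)\dot v_0=\mathcal{L}_0\dot\psi_0$; since $\int\dot\psi_0\,\mathrm{d}m_0=0$ the right-hand side is mean-zero, where $\mathrm{Id}-\mathcal{L}_0$ is invertible by the spectral gap underlying Theorem \ref{thm: RPF}, so $\dot v_0=\sum_{k\ge1}\mathcal{L}_0^{k}\dot\psi_0$. Feeding this into the duality $\int\phi\,(\mathcal{L}_0^{k}\eta)\,\mathrm{d}m_0=\int\eta\,(\phi\circ\sigma^{k})\,\mathrm{d}m_0$ -- itself a consequence of $\mathcal{L}_0^{*}m_0=m_0$ together with $\mathcal{L}_0((\phi\circ\sigma)\,\eta)=\phi\,\mathcal{L}_0\eta$ -- converts the cross term into $2\sum_{k\ge1}\int\dot\psi_0\,(\dot\psi_0\circ\sigma^{k})\,\mathrm{d}m_0$, so that $\int(\dot\psi_0)^2\,\mathrm{d}m_0+2\int\dot\psi_0\,\dot v_0\,\mathrm{d}m_0=\int(\dot\psi_0)^2\,\mathrm{d}m_0+2\sum_{k\ge1}\int\dot\psi_0\,(\dot\psi_0\circ\sigma^{k})\,\mathrm{d}m_0$. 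Expanding $\mathrm{Var}(\dot\psi_0,m_0)=\lim_{n\to\infty}\frac1n\int(\sum_{i=0}^{n-1}\dot\psi_0\circ\sigma^{i})^2\,\mathrm{d}m_0$ and using $\sigma$-invariance of $m_0$ produces exactly this series, which proves (\ref{eq:2}).

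I expect the spectral and analytic bookkeeping to be the only genuine obstacle: one must know that $\beta_t,v_t$ vary analytically and that $\mathrm{Id}-\mathcal{L}_0$ is boundedly invertible on the mean-zero subspace of $C^\alpha(\Sigma_A^+)$, so that the Neumann series for $\dot v_0$ converges and the correlation sum is summable; both follow from the quasi-compactness of $\mathcal{L}_0$ behind Theorems \ref{thm: RPF} and \ref{Thm: analyticity of pressure}. As a cross-check, one can instead note that $\tilde w:=\dot\psi_0+\dot v_0-\dot v_0\circ\sigma$ is cohomologous to $\dot\psi_0$ and satisfies $\mathcal{L}_0\tilde w=0$, whence all its correlations collapse and $\mathrm{Var}(\dot\psi_0,m_0)=\mathrm{Var}(\tilde w,m_0)=\int\tilde w^2\,\mathrm{d}m_0$ by Theorem \ref{thm:(Jordan-Pollicott-)}; this is precisely the point where the normalization $\mathcal{L}_0 1=1$ is indispensable, matching the correction discussed in Remark \ref{rem: mistake in PS}.
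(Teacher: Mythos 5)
The paper does not prove this theorem at all: it is imported verbatim from the cited sources (Prop.\ 4.10--4.11 of Parry--Pollicott and Theorem 2.2 of McMullen), so there is no in-paper argument to compare against. Your proof is correct, and it is essentially a reconstruction of the standard argument behind those citations: normalize so that $\mathcal{L}_{\psi_0}1=1$, differentiate the eigenvalue equation $\mathcal{L}_t v_t=\beta_t v_t$ once and twice, integrate against the fixed eigenmeasure $m_0$, and identify the quadratic terms with the variance via the Neumann series $\dot v_0=\sum_{k\ge 1}\mathcal{L}_0^k\dot\psi_0$ and the duality $\int\phi\,(\mathcal{L}_0^k\eta)\,\mathrm{d}m_0=\int\eta\,(\phi\circ\sigma^k)\,\mathrm{d}m_0$. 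The only caveat worth flagging is that the invertibility of $\mathrm{Id}-\mathcal{L}_0$ on the mean-zero subspace together with geometric convergence of the Neumann series and summability of the correlations requires $A$ to be aperiodic (mixing), not merely irreducible as in the paper's standing hypothesis --- with a periodic irreducible $A$ there is peripheral spectrum on the unit circle and your series arguments need Abel/Ces\`aro summation; this is the same standing assumption under which the cited results are proved, so it is a point of bookkeeping rather than a gap in the idea.
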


\textsf{We now state an important property found by Bowen, so-called
Bowen's formula, which relates the growth rate of weighted periodic
orbits (or topological entropy of a certain system) with the pressure.}\textcolor{red}{{} }

\begin{thm}
[Bowen's formula, Prop. 6.1 \cite{Parry:1990tn}] \label{thm: Bowen's formula}If
$f:\Sigma_{A}^{+}\to\real$ is a positive continuous function, then
\[
P(-s\cdot f)=0
\]
if and only if $s=h_{f}$ where 
\[
h_{f}=\lim_{T\to\infty}\frac{1}{T}\log\#\{\underline{x}\in\Sigma_{A}^{+};\sigma^{n}\underline{x}=\underline{x}\mbox{ and }f^{n}(\underline{x})<T\mbox{ for some }n\in\mathbb{N}\}.
\]
 
\end{thm}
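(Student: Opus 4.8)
The plan is to study the single real function $g(s):=P(-s\cdot f)$ and show that it is continuous, strictly decreasing, and changes sign exactly once; its unique zero $s_{0}$ is then identified with the growth rate $h_{f}$ by a Poincar\'e--series argument. Throughout I use that $\Sigma_{A}^{+}$ is compact and $f$ is continuous and positive, so that $0<c\le f\le C$ on $\Sigma_{A}^{+}$ with $c=\min f$ and $C=\max f$; in particular $cn\le f^{n}(\underline{x})\le Cn$ for every $\underline{x}$ with $\sigma^{n}\underline{x}=\underline{x}$.

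First I would establish existence and uniqueness of the zero. Applying the first--derivative formula \eqref{eq:1} to the path $\psi_{t}=-tf$ (whose equilibrium state at parameter $s$ is $m_{-sf}$) gives $g'(s)=\int(-f)\,\mathrm{d}m_{-sf}=-\int f\,\mathrm{d}m_{-sf}\le -c<0$, so $g$ is strictly decreasing; by the analyticity of the pressure (Theorem \ref{Thm: analyticity of pressure}) it is in particular continuous. Integrating the derivative bound yields $g(s)\le g(0)-cs\to-\infty$ as $s\to\infty$. Since $g(0)=P(0)$ is the topological entropy of $(\Sigma_{A}^{+},\sigma)$, which is positive in the transitive, non-degenerate situation we consider, the intermediate value theorem produces a unique $s_{0}\in(0,\infty)$ with $g(s_{0})=0$. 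This already supplies the uniqueness in the ``if and only if''.

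The heart of the matter is to show $s_{0}=h_{f}$. Consider the Poincar\'e series $\eta(s)=\sum_{n\ge1}\sum_{\sigma^{n}\underline{x}=\underline{x}}e^{-sf^{n}(\underline{x})}=\sum_{n\ge1}Z_{n}(s)$, where $Z_{n}(s)=\sum_{\sigma^{n}\underline{x}=\underline{x}}e^{-sf^{n}(\underline{x})}$. By the periodic--point characterization of the pressure recalled after the definition of $P$, we have $\tfrac{1}{n}\log Z_{n}(s)\to P(-sf)=g(s)$; hence the terms $Z_{n}(s)$ decay geometrically when $g(s)<0$ and grow when $g(s)>0$, so the abscissa of convergence of $\eta$ is exactly $s_{0}$. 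On the other hand, reading each pair $(n,\underline{x})$ as a unit mass at $\ell=f^{n}(\underline{x})$ exhibits $\eta(s)=\int_{0}^{\infty}e^{-s\ell}\,\mathrm{d}M(\ell)$ as a Laplace--Stieltjes transform of the counting function $M(\ell)=\#\{(n,\underline{x}):\sigma^{n}\underline{x}=\underline{x},\ f^{n}(\underline{x})\le\ell\}$, and a standard Abelian theorem identifies this abscissa with $\limsup_{\ell\to\infty}\tfrac{1}{\ell}\log M(\ell)$. Finally I would compare $M$ with the count $N(T)=\#\{\underline{x}:\exists\,n,\ \sigma^{n}\underline{x}=\underline{x},\ f^{n}(\underline{x})<T\}$ defining $h_{f}$: because $f>0$, the minimum of $f^{n}(\underline{x})$ over admissible $n$ is attained at the minimal period, and grouping pairs by their underlying periodic point together with $f^{n}\ge c$ gives $N(\ell)\le M(\ell)\le (\ell/c)\,N(\ell)$. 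The polynomial factor is invisible to the exponential rate, so $\limsup_{\ell}\tfrac{1}{\ell}\log N(\ell)=s_{0}$, that is $h_{f}=s_{0}$, which together with the previous paragraph yields the stated equivalence.

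The main obstacle is the passage from the analytic/thermodynamic data to the combinatorial count, i.e. the Tauberian step together with the bookkeeping between pairs $(n,\underline{x})$ and genuine periodic points. What makes this clean is precisely the two--sided bound $0<c\le f\le C$ coming from positivity and compactness: it both forces $g(s)\to-\infty$, giving existence of the zero, and collapses the discrepancy between $M$ and $N$ into a harmless polynomial factor. If one insisted on the genuine limit in the definition of $h_{f}$ (rather than the $\limsup$ controlled above) one would invoke the finer equidistribution and prime--orbit asymptotics, but for the value of the growth rate the abscissa computation suffices.
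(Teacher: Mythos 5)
You should first be aware that the paper contains no proof of this statement: it is quoted verbatim with a citation to Prop.~6.1 of \cite{Parry:1990tn}, where the argument runs through the suspension flow with roof function $f$ --- Abramov's formula and the variational principle identify the unique root of $P(-sf)=0$ with the topological entropy of that flow, which is in turn identified with the closed-orbit growth rate. So your proposal can only be compared with that standard argument, and your route is genuinely different; moreover, its first half is sound. The strict monotonicity of $g(s)=P(-sf)$, the bound $g(s)\le g(0)-cs$ forcing a unique root $s_{0}$, the identification of $s_{0}$ with the abscissa of convergence of $\eta(s)=\sum_{n}Z_{n}(s)$ via the periodic-point characterization of pressure, and the sandwich $N(\ell)\le M(\ell)\le(\ell/c)N(\ell)$ are all correct. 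Two small caveats: the derivative formula \eqref{eq:1} you invoke presupposes H\"older data (it is stated for equilibrium states $m_{-sf}$), whereas the theorem assumes only continuity of $f$; strict decrease needs no differentiation at all, since $P(-s'f)\le P(-sf-(s'-s)c)=P(-sf)-(s'-s)c$ for $s'>s$. Likewise $g(0)=P(0)>0$ uses the standing nontriviality of the shift, which you flagged.

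The genuine gap is the final step. The theorem defines $h_{f}$ by a \emph{limit}, and everything you prove identifies only $\limsup_{T\to\infty}\frac{1}{T}\log N(T)$ with $s_{0}$. This is not something the Laplace--Stieltjes machinery can absorb: divergence of $\int_{0}^{\infty}e^{-s\ell}\,\mathrm{d}M(\ell)$ for every $s<s_{0}$ is perfectly compatible with $M$ (hence $N$) growing in sparse bursts, so the abscissa computation gives no lower bound whatsoever on the liminf. Your closing appeal to ``prime-orbit asymptotics'' does not repair this: those asymptotics require the suspension to be non-lattice (topologically weakly mixing) and fail as stated, e.g., for constant $f$ --- yet the lattice case is exactly the one arising in this paper, where $l$ is locally constant. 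What actually closes the gap is an exponential tilting argument using the differentiability (or convexity) of $s\mapsto P(-sf)$ at $s_{0}$: writing $\bar{f}=\int f\,\mathrm{d}m_{-s_{0}f}$, for small fixed $q>0$ and any $\delta>0$ one has
\[
\sum_{\sigma^{n}\underline{x}=\underline{x},\ |f^{n}(\underline{x})-n\bar{f}|>n\delta}e^{-s_{0}f^{n}(\underline{x})}\le e^{-qn\delta+O(q^{2})n+o(n)},
\]
so all but an exponentially small part of $Z_{n}(s_{0})=e^{o(n)}$ comes from periodic points with $f^{n}(\underline{x})\le n(\bar{f}+\delta)$, each contributing at most $e^{-s_{0}n(\bar{f}-\delta)}$; hence there are at least $e^{s_{0}n(\bar{f}-\delta)-o(n)}$ such points, all counted by $N\bigl(n(\bar{f}+\delta)+1\bigr)$, and letting $\delta\downarrow0$ (using monotonicity of $N$ to interpolate in $T$) gives $\liminf_{T\to\infty}\frac{1}{T}\log N(T)\ge s_{0}$. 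Without this step, or the Abramov-type argument of \cite{Parry:1990tn}, your proposal proves the theorem only with $\limsup$ in place of the limit.
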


\textsf{Now we change gear and focus on a particular type of Hölder
continuous functions: functions only depending on first two coordinates,
i.e. $f:\Sigma_{A}^{+}\to\real$ and $f(\underline{x})=f(x_{0},x_{1})$,
where $\underline{x}=x_{0}x_{1}x_{2}...$ For every such function
$f$, we have explicit formulas of the eigenvalue $\beta_{f}$, the
eigenfunction $v_{f}$, and the measure $\mu_{f}$ given in Theorem}
\ref{thm: RPF}.

\begin{prop}
[Remark 1, p.27 \cite{Parry:1990tn}]\label{prop: locally const fucn}Let
$f:\Sigma_{A}^{+}\to\real$ be a function depending only on first
two coordinates and $\mathcal{L}_{f}v_{f}=\beta_{f}v_{f}$. Suppose
$A_{f}$ is the matrix defined by 
\[
A_{f}(i,j)=A(i,j)e^{f(i,j)},\mbox{ }1\leq i,j\leq k,
\]
 then \begin{enumerate}

\item $\beta_{f}$ is a maximal eigenvalue of $A_{f}$. 

\item $v_{f}$ is the (left) eigenvector $\mathbf{v_{f}}$ of $A_{f}$
w.r.t. $\beta_{f}$, i.e. 
\[
\sum_{i}\mathbf{v}_{f}(i)A(i,j)e^{f(i,j)}=\beta_{f}\cdot\mathbf{v}_{f}(j).
\]

\item If we define $g(i,j)=\log\mathbf{v}_{f}(i)-\log\mathbf{v}_{f}(j)-\log\beta_{f}+f(i,j)$,
then $\mathcal{L}_{g}1=1$ and the matrix $P_{f}$ corresponding to
$\mathcal{L}_{g}$ is 
\[
P_{f}(i,j)=A(i,j)e^{g(i,j)}=\frac{A(i,j)\mathbf{v}_{f}(i)e^{f(i,j)}}{\beta_{f}\cdot\mathbf{v}_{f}(j)}.
\]
Moreover, $P_{f}$ is column stochastic, i.e. $\sum_{i}A(i,j)e^{g(i,j)}=1$,
and the equilibrium state $m_{g}$ w.r.t. $g$ is given by 
\[
m_{g}[i_{0},..,i_{n}]=P_{f}(i_{0},i_{1})\cdot...\cdot P_{f}(i_{n-1},i_{n})\mathbf{p}_{f}(i_{n}),
\]
where $P_{f}\mathbf{p}_{f}=\mathbf{p}_{f}$ and $\sum_{i}\mathbf{p}_{f}(i)=1$
and we use the notation $[i_{0},i_{1},..,i_{n}]=\left\{ \underline{x}\in\Sigma_{A}^{+};x_{j}=i_{j},j=0,1,...,n\right\} .$

\end{enumerate}
\end{prop}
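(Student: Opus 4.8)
The plan is to reduce all three claims to finite-dimensional linear algebra by exploiting the fact that, when $f$ depends only on the first two coordinates, the transfer operator $\mathcal{L}_f$ leaves invariant the finite-dimensional subspace $V\subset C(\Sigma_A^+)$ of functions depending on the first coordinate alone. First I would unwind the preimage structure: the preimages of $\underline{x}=x_0x_1\cdots$ under $\sigma$ are the points $ix_0x_1\cdots$ with $A(i,x_0)=1$, and since $f(ix_0\cdots)=f(i,x_0)$ I get, for $w(\underline{x})=\mathbf{v}(x_0)\in V$,
\[
(\mathcal{L}_f w)(\underline{x})=\sum_{i}A(i,x_0)e^{f(i,x_0)}\mathbf{v}(i)=\sum_i \mathbf{v}(i)A_f(i,x_0),
\]
which again lies in $V$. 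Thus $\mathcal{L}_f|_V$ is exactly left multiplication of the row vector $\mathbf{v}$ by the matrix $A_f$, and an eigenfunction in $V$ of eigenvalue $\beta$ is precisely a left eigenvector of $A_f$ of eigenvalue $\beta$, which is the content of claim (2).

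For claim (1), irreducibility of $A$ makes $A_f$ a nonnegative irreducible matrix, so the classical Perron--Frobenius theorem supplies a simple maximal positive eigenvalue of $A_f$ with a strictly positive left eigenvector $\mathbf{v}_f$; placing $\mathbf{v}_f$ into $V$ produces a strictly positive eigenfunction of $\mathcal{L}_f$. The main obstacle is that, a priori, the abstract Ruelle--Perron--Frobenius data $(\beta_f,v_f)$ of the infinite-dimensional operator $\mathcal{L}_f:C(\Sigma_A^+)\to C(\Sigma_A^+)$ need not be the finite-dimensional Perron data of $A_f$. The bridge is the uniqueness clause of Theorem \ref{thm: RPF}, which asserts that $\beta_f$ is the unique eigenvalue admitting a strictly positive eigenfunction and that it realizes the spectral radius. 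Since the Perron eigenvalue of $A_f$ does admit such an eigenfunction, the two must coincide, giving (1) and identifying $v_f$ with $\mathbf{v}_f$.

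Finally, for claim (3) I would verify the normalization by direct computation. With $g(i,j)=\log\mathbf{v}_f(i)-\log\mathbf{v}_f(j)-\log\beta_f+f(i,j)$ and $P_f(i,j)=A(i,j)e^{g(i,j)}$, the same preimage computation gives
\[
(\mathcal{L}_g 1)(\underline{x})=\sum_i A(i,x_0)e^{g(i,x_0)}=\frac{1}{\beta_f\mathbf{v}_f(x_0)}\sum_i \mathbf{v}_f(i)A_f(i,x_0)=1,
\]
where the last equality is the left-eigenvector equation of (2); this simultaneously shows $\mathcal{L}_g1=1$ and that $P_f$ is column stochastic. Consequently $1$ is a strictly positive eigenfunction of $\mathcal{L}_g$ with eigenvalue $1$, so $\beta_g=1$, $P(g)=0$, and $v_g$ may be taken to be $1$, whence the equilibrium state satisfies $m_g=\mu_g$. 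To pin down $\mu_g$ as the claimed Markov measure, I would check that $\nu[i_0,\dots,i_n]=P_f(i_0,i_1)\cdots P_f(i_{n-1},i_n)\mathbf{p}_f(i_n)$ defines a $\sigma$-invariant probability measure (consistency and total mass $1$ from $P_f\mathbf{p}_f=\mathbf{p}_f$, and $\sigma$-invariance from $\sum_i P_f(i,j)=1$), and then verify $\int \mathcal{L}_g\phi\,\mathrm{d}\nu=\int\phi\,\mathrm{d}\nu$ on cylinder indicators $\phi=\mathbf{1}_{[j_0,\dots,j_n]}$. A short computation shows $\mathcal{L}_g\phi=P_f(j_0,j_1)\mathbf{1}_{[j_1,\dots,j_n]}$, so both sides equal $P_f(j_0,j_1)\cdots P_f(j_{n-1},j_n)\mathbf{p}_f(j_n)$. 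Since cylinders generate the Borel $\sigma$-algebra, $\mathcal{L}_g^*\nu=\nu$, and the uniqueness clause of Theorem \ref{thm: RPF} forces $\nu=\mu_g=m_g$, completing (3).
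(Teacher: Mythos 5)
Your proof is correct. Note first that the paper itself offers no proof of this proposition: it is quoted directly from Parry--Pollicott (Remark 1, p.~27 of \cite{Parry:1990tn}), so there is no in-paper argument to compare against, and your write-up supplies the standard justification that the citation stands in for. Your mechanism is the right one: observing that $\mathcal{L}_f$ preserves the finite-dimensional subspace $V$ of functions of the first coordinate, on which it acts as left multiplication by $A_f$; invoking finite-dimensional Perron--Frobenius for the irreducible nonnegative matrix $A_f$; bridging back to the Ruelle operator data; and, for part (3), verifying $\mathcal{L}_g1=1$ directly and identifying the Markov measure through $\mathcal{L}_g^{*}\nu=\nu$ checked on cylinder indicators together with uniqueness of the eigenmeasure. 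One point of precision: Theorem \ref{thm: RPF} as stated in the paper does not literally assert that $\beta_f$ is ``the unique eigenvalue admitting a strictly positive eigenfunction''; clause (1) gives simplicity and the spectral-radius property, while clause (2) gives uniqueness of the eigen\emph{measure} $\mu_f$. The fact you need is, however, an immediate consequence of clause (2): if $\mathcal{L}_f v=\lambda v$ with $v>0$ continuous, then
\[
\lambda\int v\,\mathrm{d}\mu_f=\int\mathcal{L}_f v\,\mathrm{d}\mu_f=\beta_f\int v\,\mathrm{d}\mu_f,
\]
and since $\int v\,\mathrm{d}\mu_f>0$ this forces $\lambda=\beta_f$; the same one-line computation yields $\beta_g=1$ in part (3). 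With that derivation inserted, and with the simplicity of $\beta_f$ from clause (1) invoked to identify $v_f$ (up to scale) with the function induced by $\mathbf{v}_f$, your argument is complete.
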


\begin{rem}
We call $[i_{0},i_{1},...,i_{n}]$ a cylinder set, and in fact cylinder
sets form a basis of the topology on $\Sigma_{A}^{+}$.
\end{rem}

\subsubsection{Pressure type metric on subshifts of finite type\label{sub:Pressure-metric}}

\textsf{Here we keep the same setting as in the previous subsection
that $(\Sigma_{A}^{+},\sigma)$ is a subshift of finite type and $A$
is irreducible. We consider the space $\mathcal{P}(\Sigma_{A}^{+})$
of Livšic cohomology classes of pressure zero Hölder continuous on
$\Sigma_{A}^{+}$, i.e. 
\[
\mathcal{P}(\Sigma_{A}^{+}):=\left\{ f;f\in C^{\alpha}(\Sigma_{A}^{+})\mbox{ for some \ensuremath{\alpha\:}and \ensuremath{P(f)=0}}\right\} /\sim,
\]
where $\sim$ denotes the Livšic cohomology relation.}

\textsf{The tangent space of $\mathcal{P}(\Sigma_{A}^{+})$ at $f$
is defined by 
\[
T_{f}\mathcal{P}(\Sigma_{A}^{+})=\ker\mathbf{\mathrm{\mathbf{D}}}P(f)=\left\{ \phi;\mbox{ \ensuremath{\phi\in C^{\alpha}(\Sigma_{A}^{+})\mbox{ for some \ensuremath{\alpha\:}and}\int\phi\mathrm{d}m_{f}=0}}\right\} /\sim,
\]
where $\mathbf{\mathrm{\mathbf{D}}}P(f)$ is derivative of $P$ at
$f$ and the $m_{f}$ is the equilibrium state of $f$.}

\textsf{Since the variance vanishes only on functions that are cohomologous
to zero, by using the variance we can define two }\textit{pressure
type metrics}\textsf{ on $T_{f}\mathcal{P}(\Sigma_{A}^{+})$ as what
stated below. The main reason why we call these metrics ``pressure
type metrics'' is that these metrics are defined via the variance
which is indeed the second derivative of the pressure (cf. Theorem
\ref{thm:(Derivatives-of-the}). }

\begin{defn}
[Pressure type metrics for subshifts of finite type] Let $f\in\mathcal{P}(\Sigma_{A}^{+})$
and $\phi\in T_{f}\mathcal{P}(\Sigma_{A}^{+})$ then we define two
\textit{pressure type metrics}: 
\[
\left\Vert \phi\right\Vert _{P}^{2}:=\mathrm{Var}(\phi,m_{f})
\]
and 
\[
\left\Vert \phi\right\Vert _{WP}^{2}:=\frac{\mathrm{Var}(\phi,m_{f})}{-\int f\mbox{d}m_{f}}.
\]
The former one is called the \textit{pressure metric }on $\mathcal{P}(\Sigma_{A}^{+})$
and denoted by $||\cdot||_{P}$, and the latter one is called the\textsl{
}\textit{Weil-Petersson metric} on $\mathcal{P}(\Sigma_{A}^{+})$
and denoted by $||\cdot||_{WP}$.
\end{defn}

\textsf{One can immediately see from the definition that $||\cdot||_{P}$
and $||\cdot||_{WP}$ are conformal and they only differ by a normalization.
We call $||\cdot||_{WP}$ the Weil-Petersson metric, because McMullen
points out that after normalizing by the variance, one can get the
Weil-Petersson metric of Teichmüller space (cf. the Theorem \ref{thm: pressue =00003D wp}). }

\textsf{The following theorem is our main formula for computing the
pressure metric and the Weil-Petersson metric. }

\begin{prop}
\label{prop: pressure metric}If $\{\phi_{t}\}_{t\in(-1,1)}$ is a
smooth one parameter family contained in $\mathcal{P}(\Sigma_{A}^{+})$,
then 
\[
\left\Vert \dot{\phi}_{0}\right\Vert _{WP}^{2}=\frac{\int\ddot{\phi}_{0}\mbox{d}m_{\phi_{0}}}{\int\phi_{0}\mbox{d}m_{_{0}}}\mbox{ and }\left\Vert \dot{\phi}_{0}\right\Vert _{P}^{2}=-\int\ddot{\phi}_{0}\mbox{d}m_{\phi_{0}}
\]
 where $\dot{\phi}_{0}=\left.\frac{d}{dt}\phi_{t}\right|_{t=0}$ and
$\ddot{\phi}_{0}=\left.\frac{d^{2}}{dt^{2}}\phi_{t}\right|_{t=0}$.\end{prop}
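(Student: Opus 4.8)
The plan is to exploit the single structural fact that the whole family lies in the pressure-zero slice $\mathcal{P}(\Sigma_A^+)$, so that $t\mapsto P(\phi_t)$ is the constant function $0$, and then to simply read off the two formulas from the first- and second-order expansions of the pressure supplied by Theorem \ref{thm:(Derivatives-of-the}. In this sense the proposition is essentially a corollary of the derivative-of-pressure formulas once the pressure-zero condition is imposed.

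First I would record that, since $\phi_t\in\mathcal{P}(\Sigma_A^+)$ for every $t\in(-1,1)$, we have $P(\phi_t)=0$ identically in $t$; consequently both $\left.\frac{d}{dt}P(\phi_t)\right|_{t=0}$ and $\left.\frac{d^2}{dt^2}P(\phi_t)\right|_{t=0}$ vanish. That these derivatives exist and may be computed by Theorem \ref{thm:(Derivatives-of-the} is guaranteed by the hypothesis that $\{\phi_t\}$ is a smooth family in $C^\alpha(\Sigma_A^+)$ together with the analyticity of the pressure (Theorem \ref{Thm: analyticity of pressure}). Applying the first-derivative formula \eqref{eq:1} with $\psi_t=\phi_t$ and $m_0=m_{\phi_0}$ then gives
\[
0=\left.\frac{dP(\phi_t)}{dt}\right|_{t=0}=\int_{\Sigma_A^+}\dot\phi_0\,\mathrm{d}m_{\phi_0}.
\]
This serves two purposes: it confirms that $\dot\phi_0$ genuinely lies in the tangent space $T_{\phi_0}\mathcal{P}(\Sigma_A^+)$, so the metrics are being evaluated on an admissible vector, and it verifies the hypothesis $\int\dot\phi_0\,\mathrm{d}m_{\phi_0}=0$ required to invoke the second-order formula.

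Next I would apply the second-derivative formula \eqref{eq:2}. Since the second derivative of the constant function $P(\phi_t)\equiv 0$ also vanishes, we obtain
\[
0=\left.\frac{d^2P(\phi_t)}{dt^2}\right|_{t=0}=\mathrm{Var}(\dot\phi_0,m_{\phi_0})+\int_{\Sigma_A^+}\ddot\phi_0\,\mathrm{d}m_{\phi_0},
\]
which rearranges to the key identity $\mathrm{Var}(\dot\phi_0,m_{\phi_0})=-\int_{\Sigma_A^+}\ddot\phi_0\,\mathrm{d}m_{\phi_0}$. Substituting this into the definitions $\left\Vert\dot\phi_0\right\Vert_P^2=\mathrm{Var}(\dot\phi_0,m_{\phi_0})$ and $\left\Vert\dot\phi_0\right\Vert_{WP}^2=\mathrm{Var}(\dot\phi_0,m_{\phi_0})/\bigl(-\int\phi_0\,\mathrm{d}m_{\phi_0}\bigr)$ yields both claimed expressions simultaneously, the two minus signs cancelling in the Weil--Petersson case.

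The computation is immediate once the pressure-zero condition is in hand, so there is no serious analytic obstacle; the only point genuinely requiring care is bookkeeping rather than difficulty. Specifically, I would make sure that the equilibrium state appearing in the variance term of \eqref{eq:2} is exactly $m_{\phi_0}$, the equilibrium state of the \emph{base point} $\phi_0$, so that it matches the equilibrium state used in the definition of $\left\Vert\cdot\right\Vert_P$ and $\left\Vert\cdot\right\Vert_{WP}$ at $\phi_0$; and I would note that $m_{\phi_0}$ is well defined precisely because $P(\phi_0)=0$. Both facts follow directly from $\phi_0\in\mathcal{P}(\Sigma_A^+)$ and the statement of Theorem \ref{thm:(Derivatives-of-the}.
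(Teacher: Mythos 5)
Your proposal is correct and follows essentially the same route as the paper: both arguments differentiate the identically zero function $t\mapsto P(\phi_t)$ twice, use the second-derivative formula of Theorem \ref{thm:(Derivatives-of-the} to identify that second derivative with $\mathrm{Var}(\dot{\phi}_{0},m_{\phi_{0}})+\int\ddot{\phi}_{0}\,\mathrm{d}m_{\phi_{0}}$, and substitute the resulting identity into the definitions of $\left\Vert\cdot\right\Vert_{P}$ and $\left\Vert\cdot\right\Vert_{WP}$. Your explicit verification of the first-order hypothesis $\int\dot{\phi}_{0}\,\mathrm{d}m_{\phi_{0}}=0$ via formula \eqref{eq:1} is a small point of care that the paper leaves implicit, but it does not change the argument.
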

\begin{proof}
This follows the direct computation of the (Gâteaux) second derivative
of $P(\phi_{t})$: 
\begin{align*}
\left.\frac{d^{2}}{dt^{2}}P(\phi_{t})\right|_{t=0} & =(\mathbf{D}^{2}P)(\phi_{0})(\dot{\phi}_{0},\dot{\phi}_{0})+(\mathbf{D}P)(\phi_{0})(\ddot{\phi}_{0})\\
 & =\mathrm{Var}(\dot{\phi}_{0},m_{\phi_{0}})+\int\ddot{\phi}_{0}\mbox{d}m_{\phi_{0}}.
\end{align*}

Since $P(\phi_{t})=0$, we have 
\[
\left\Vert \dot{\phi}_{0}\right\Vert _{WP}^{2}:=\frac{\mathrm{Var}(\dot{\phi}_{0},m_{\phi_{0}})}{-\int\phi_{0}\mbox{d}m_{c_{0}}}=\frac{\int\ddot{\phi}_{0}\mbox{d}m_{\phi_{0}}}{\int\phi_{0}\mbox{d}m_{\phi_{0}}}.
\]

\end{proof}

\subsection{The Weil-Petersson metric on moduli spaces\label{sub:The-Weil-Petersson-metric-moduli }}

\textsf{In this subsection, we recall several facts of the Weil-Petersson
metric on moduli spaces. These geometric features of the Weil-Petersson
metric are critical clues for us to investigate the pressure metric
geometry. The following approach to the Weil-Petersson metric is not
traditional. The classic construction of the Weil-Petersson metric
is built on the complex structure of Teichmüller space. Whereas, without
employing the complex structure, McMullen in \cite{McMullen:2008eh}
gave a new characterization of the Weil-Petersson metric via the thermodynamic
formalism. Our study in this work is inspired by this point of view.
We summarize McMullen's theorem in Theorem \ref{thm: pressue =00003D wp}.
Before we jump into the statement, we recall some definitions. }

\textsf{Given a compact topological surface $S$ with negative Euler
characteristic, Teichmüller space $\tei(S)$ describes the marked
Riemannian metrics on $S$. i.e. the conformal classes of Riemannian
metric on $S$ with constant Gaussian curvature $-1$. The moduli
space $\moduli(S)$ describes the unmarked Riemannian metrics on $S$
and is obtained by quotienting $\tei(S)$ by the Mapping Class Group
of $S$. }

\textsf{We consider a $C^{1}$ family of metric $g_{\lambda}\in\moduli(S)$,
$0\leq\lambda\leq1$. Let $T^{1}S$ be the unit tangent bundle of
the surface $S$ with respect to the metric $g_{\lambda_{0}}$. Let
$\mu_{\lambda_{0}}$ be the corresponding Liouville measure on $T^{1}S$.
We denote by $\phi_{t}^{(\lambda_{0})}:T^{1}S\to T^{1}S$ the geodesic
flow. Since $g_{\lambda}$, for $0\leq\lambda\leq1$, is a volume
preserving deformation we have 
\[
\int\dot{g}_{\lambda_{0}}(v,v)\mathrm{d}\mu_{\lambda_{0}}(v)=0,
\]
where $\dot{g}_{\lambda_{0}}$ is defined via the expansion 
\[
g_{\lambda}=g_{\lambda_{0}}+\dot{g}_{\lambda_{0}}(\lambda-\lambda_{0})+O((\lambda-\lambda_{0})^{2}).
\]
(cf. Lemma 7. (a) and (c) \cite{Pollicott:1994wq}.)}

\begin{defn}
The \textit{variance} for $\dot{g}_{\lambda_{0}}(v,v)$ is given by
\[
\mathrm{Var}(\dot{g}_{\lambda_{0}},\mu_{\lambda_{0}}):=\lim_{t\to\infty}\frac{1}{t}\int\left(\int_{0}^{t}\dot{g}_{\lambda_{0}}(\phi_{s}^{(\lambda_{0})}(v),\phi_{s}^{(\lambda_{0})}(v))\mbox{d}s\right)^{2}\mbox{d}\mu_{\lambda_{0}}
\]
 
\end{defn}

\begin{thm}
[McMullen, Theorem 1.12 \cite{McMullen:2008eh}]\label{thm: pressue =00003D wp}

The Weil-Petersson metric is proportional to the variance. More precisely,
\[
\left\Vert \dot{g}_{\lambda_{0}}\right\Vert _{Pressure}^{2}:=\frac{\mathrm{Var}(\dot{g}_{\lambda_{0}},\mu_{\lambda_{0}})}{\int_{T^{1}S}g_{\lambda_{0}}(v,v)\mathrm{d}\mu_{\lambda_{0}}}=\frac{4}{3}\cdot\frac{\left\Vert \dot{g}_{\lambda_{0}}\right\Vert _{WP}^{2}}{\mathrm{area}(S,g_{\lambda_{0}})}.
\]

\end{thm}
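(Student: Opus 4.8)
The plan is to follow McMullen's thermodynamic reformulation and reduce both sides to an explicit integral over the unit tangent bundle, with the universal constant $4/3$ emerging from a single one-dimensional integral of a matrix coefficient of $\psl(2,\real)$. First I would dispose of the denominators. Since $T^1S$ is the unit tangent bundle for $g_{\lambda_0}$, we have $g_{\lambda_0}(v,v)\equiv 1$, so $\int_{T^1S}g_{\lambda_0}(v,v)\,\mathrm d\mu_{\lambda_0}$ is just the total Liouville mass $\mu_{\lambda_0}(T^1S)=2\pi\cdot\mathrm{area}(S,g_{\lambda_0})$ (the Liouville measure being $\mathrm dA\,\mathrm d\theta$ with the fibre circle of length $2\pi$, and the area being the topological constant $-2\pi\chi(S)$ by Gauss--Bonnet). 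Thus the asserted identity is equivalent to the normalization-free statement
\[
\mathrm{Var}(\dot g_{\lambda_0},\mu_{\lambda_0})=\tfrac{8\pi}{3}\,\|\dot g_{\lambda_0}\|_{WP}^{2},
\]
and it is this that I would establish, after which dividing by $\mu_{\lambda_0}(T^1S)=2\pi\cdot\mathrm{area}(S,g_{\lambda_0})$ converts $8\pi/3$ into the advertised $4/3$.

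Next I would put the deformation in a good gauge. Because every $g_\lambda$ has constant curvature $-1$, its area is the fixed topological quantity $-2\pi\chi(S)$, so the family is automatically volume preserving and $\int\dot g_{\lambda_0}(v,v)\,\mathrm d\mu_{\lambda_0}=0$, exactly the hypothesis recorded in the excerpt that lets me treat $\mathrm{Var}$ as an honest $L^2$ quantity. Modulo the action of diffeomorphisms (which change neither the marked length spectrum nor the geodesic flow up to conjugacy, hence affect neither side), the linearized constant-curvature and trace conditions force the essential part of $\dot g_{\lambda_0}$ to be a transverse--traceless symmetric $2$-tensor, i.e. the real part of a holomorphic quadratic differential $\Phi=\phi(z)\,dz^2$, whose Weil--Petersson norm is $\|\dot g_{\lambda_0}\|_{WP}^{2}=\int_S|\phi|^2\rho^{-2}\,\mathrm dA$ with $\rho$ the hyperbolic density. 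Evaluating this trace-free tensor on a unit vector $v$ making angle $\theta$ with a reference frame produces a function $F(v)=\dot g_{\lambda_0}(v,v)=\mathrm{Re}\bigl(c(z)e^{-2i\theta}\bigr)$ on $T^1S=\Gamma\backslash\psl(2,\real)$ of pure $K$-weight $\pm4$.

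The heart of the argument, and the step I expect to be the main obstacle, is the computation of the time-correlations of $F$ along the geodesic flow $\phi_s$. I would write the variance as the integrated autocorrelation $\mathrm{Var}(\dot g_{\lambda_0},\mu_{\lambda_0})=\int_{-\infty}^{\infty}C(s)\,\mathrm ds$ with $C(s)=\int_{T^1S}F(v)\,F(\phi_s v)\,\mathrm d\mu_{\lambda_0}$, and then evaluate $C(s)$ through the harmonic analysis of $\psl(2,\real)$: since $\Phi$ is a weight-$4$ automorphic form, $F$ lives in the holomorphic/antiholomorphic discrete series $D_{\pm4}$, and the matrix coefficient of the extreme-weight vector under the diagonal flow $a_s$ is the explicit function $(\cosh(s/2))^{-4}$ (equivalently obtained by an explicit integration of $\mathrm{Re}(\Phi)$ along lifted geodesics in $\hy$). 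After separating the orthogonal $D_{+4}$ and $D_{-4}$ summands this yields the clean identity $C(s)=C(0)\,\mathrm{sech}^4(s/2)$, where the fibrewise integral $\int_0^{2\pi}\cos^2(2\theta)\,\mathrm d\theta=\pi$ gives $C(0)=\int_{T^1S}F^2\,\mathrm d\mu_{\lambda_0}=\pi\int_S|\phi|^2\rho^{-2}\,\mathrm dA=\pi\,\|\dot g_{\lambda_0}\|_{WP}^{2}$. The delicate points are the correct identification of the $K$-type, the justification that the cross-correlations between the two discrete-series summands vanish, and the careful tracking of every normalization constant (in particular the $\rho^{-1}$ produced by evaluating $\Phi$ on hyperbolic-unit vectors, which is what turns the fibre integral into the Weil--Petersson density).

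Finally I would collect the constants. Using
\[
\int_{-\infty}^{\infty}\mathrm{sech}^4(s/2)\,\mathrm ds=2\int_{-\infty}^{\infty}\mathrm{sech}^4(u)\,\mathrm du=2\Bigl[\tanh u-\tfrac13\tanh^3 u\Bigr]_{-\infty}^{\infty}=\tfrac{8}{3},
\]
I obtain $\mathrm{Var}(\dot g_{\lambda_0},\mu_{\lambda_0})=C(0)\cdot\tfrac{8}{3}=\tfrac{8\pi}{3}\,\|\dot g_{\lambda_0}\|_{WP}^{2}$, which is precisely the normalization-free statement of the first paragraph; dividing by $\mu_{\lambda_0}(T^1S)=2\pi\cdot\mathrm{area}(S,g_{\lambda_0})$ gives $\tfrac{4}{3}\cdot\|\dot g_{\lambda_0}\|_{WP}^{2}/\mathrm{area}(S,g_{\lambda_0})$ and completes the proof. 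It is worth emphasizing that the universal constant $4/3$ is nothing but the elementary integral $\int_{-\infty}^{\infty}\mathrm{sech}^4(u)\,\mathrm du$, so the entire content of the theorem is the reduction of the geodesic-flow variance to this single matrix-coefficient integral of the weight-$4$ discrete series.
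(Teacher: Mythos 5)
The first thing to say is that the paper itself contains no proof of this statement: it is quoted in the preliminaries as a background result (McMullen, Theorem 1.12 of \cite{McMullen:2008eh}), so the only argument to measure yours against is McMullen's original one. Against that benchmark, your proposal is in essence a correct reconstruction of the same proof: the variance is written as an integrated autocorrelation $\int_{\mathbb{R}}C(s)\,\mathrm{d}s$, the observable $F(v)=\dot g_{\lambda_0}(v,v)$ attached to a holomorphic quadratic differential is shown to satisfy the decay law $C(s)=C(0)\cosh^{-4}(s/2)$, and the constant comes from $\int_{\mathbb{R}}\cosh^{-4}(s/2)\,\mathrm{d}s=8/3$. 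Your bookkeeping is internally consistent: with $\mu_{\lambda_0}=\mathrm{d}A\,\mathrm{d}\theta$ one has total mass $2\pi\,\mathrm{area}(S)$ and $C(0)=\pi\int_S|\phi|^2\rho^{-2}\,\mathrm{d}x\,\mathrm{d}y$, which indeed produces the advertised $4/3$. The genuine methodological difference is how the decay law is obtained: you derive it from matrix coefficients of the discrete series $D_{\pm4}$ of $\mathrm{PSL}(2,\mathbb{R})$, whereas McMullen obtains the same identity by a direct computation resting on the mean-value property of holomorphic functions along the flow; the two are equivalent, and your phrasing has the advantage that orthogonality of the $D_{+4}$ and $D_{-4}$ summands (hence vanishing of cross-correlations) is automatic from the isotypic decomposition of $L^2(\Gamma\backslash\mathrm{PSL}(2,\mathbb{R}))$.

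Two steps that you flag but leave open would need to be closed for this to be a complete proof. First, discarding the diffeomorphism part $L_Vg$ of $\dot g_{\lambda_0}$ requires a dynamical justification, not just the remark that it does not change the marked length spectrum: the function $(L_Vg)(v,v)$ integrates to zero over every closed geodesic (first variation of length at a critical point), hence is a Liv\v{s}ic coboundary, hence lies in the null space of the quadratic form $\mathrm{Var}(\cdot,\mu_{\lambda_0})$ and can be dropped without affecting the left-hand side; the right-hand side ignores it by the definition of the Weil--Petersson metric on Teichm\"uller space. Second, the identification $\|\dot g_{\lambda_0}\|_{WP}^{2}=\int_S|\phi|^2\rho^{-2}\,\mathrm{d}x\,\mathrm{d}y$ is a normalization convention for the Weil--Petersson pairing under the correspondence $\dot g_{\lambda_0}=\mathrm{Re}(\phi\,dz^{2})$, and the constant $4/3$ is exactly as precise as that convention (and as the $K$-type identification you mention); it must be pinned down against McMullen's definitions rather than assumed, since this is precisely where a factor of $2$ or $4$ could silently slip.
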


\section{Pressure metrics on the space of metric graphs\label{sec:Pressure-metrics-on-graphs}}

\textsf{In what follows, $\mathcal{G}$ denotes a finite, connected,
nontrivial (i.e. which contain at least two distinct closed path)
and undirected graph with edge set $\mathcal{E}$. The length of each
edge is given by the edge weighting function} $l:\mathcal{E}\to\real_{>0}$
.

\begin{defn}
Let $\mathcal{M_{G}}$ denote the space of all edge weightings $l:\mathcal{E}\to\real_{>0}$
on $\gr$.
\end{defn}

\begin{defn}
The entropy $h(l)$ of the metric graph $(\mathcal{G},l)$ is defined
by 
\[
h(l)=\lim_{T\to\infty}\frac{1}{T}\log\#\{\gamma;l(\gamma)<T\},
\]
where $\gamma=(e_{0},e_{1},...,e_{n}=e_{0})$ is a closed cycle of
edges in $\mathcal{G}$ (without backtracking) and $l(\gamma)={\displaystyle \sum_{i=0}^{n-1}l(e_{i})}$.
\end{defn}

\begin{defn}
$\mathcal{M}_{\mathcal{G}}^{1}=\left\{ l:\mathcal{E}\to\real_{>0};h(l)=1\right\} $
is the space of all edge weightings with entropy $h(l)=1$. 
\end{defn}

\begin{rem}
\label{Rmk: why h=00003D1 is enough}One might consider taking other
normalizations on the space $\mathcal{M_{G}}$, for example normalize
$\mathcal{M_{G}}$ by the volume of the graph, i.e. consider the moduli
space $\mathcal{M}_{\mathcal{G}}^{v}:=\left\{ l:\mathcal{E}\to\real_{>0};\mathrm{Vol}(\mathcal{G})=\sum l(e_{i})=1\right\} $.
Notice that in the construction of pressure type metrics for subshifts
of finite type we only consider functions of pressure zero . In general,
for any positive Hölder functions $f$, there exists a positive number
$h_{f}$ such that $-h_{f}f$ is pressure zero (Bowen's formula cf.
Theorem\ref{thm: Bowen's formula}). Therefore, for each edge weighting
function $l,$ we can always scale $l$ to be the pressure zero function
$-h_{l}l$. So, for example the pressure type metrics on $\mathcal{M}_{\mathcal{G}}^{v}$
are the pullback the pressure type metrics on $\mathcal{M}_{\mathcal{G}}^{1}$
by the map $S:\mathcal{M}_{\mathcal{G}}^{v}\to\mathcal{M}_{\mathcal{G}}^{1}$
where $S(l)=-h_{l}l$. Hence, it is enough to study pressure type
metrics on $\mathcal{M}_{\mathcal{G}}^{1}$.
\end{rem}

\subsection{From undirected graphs to directed graphs\label{sub:From-undirected-graphs}}

\textsf{In this subsection, we closely follow the construction of
a symbolic model associating with metric graphs given in Pollicott
and Sharp's work }\cite{Pollicott:2014fs}. \textsf{It is well-known
that we can associate each directed graph with an adjacency matrix
which records the directed edges connecting vertices to vertices. }

\textsf{To put directions on the undirected graph, we do the following.
Given a undirected graph $\mathcal{G}$, for each edge $e\in\mathcal{E}$
we associate $e$ with two directed edges which, abusing notation,
we shall denote by $e$ and $\overline{e}$, i.e. two opposite directions
$\centerdot\stackrel[e]{\overline{e}}{\leftrightarrows}\centerdot$.
We denote by $\mathcal{E}^{0}$ the set of all directed edges. We
say $e'\in\mathcal{E}^{o}$ follows $e\in\mathcal{E}^{0}$ if $e'$
begins at the terminal endpoint of $e$, i.e. $\underset{e'}{\nwarrow}\centerdot\underset{e}{\swarrow}$.
We then define a $|\mathcal{E}^{o}|\times|\mathcal{E}^{o}|$ matrix
$A$, with rows and columns indexed by $\mathcal{E}^{0}$, by 
\[
A(e,e')=\begin{cases}
1 & \mbox{if }e'\mbox{ follows }e\mbox{ and }e'\neq\overline{e}\\
0 & \mbox{otherwise.}
\end{cases}
\]
}

\textsf{Then the shift space 
\[
\Sigma_{A}=\left\{ \underline{e}=(e_{n})_{n\in\mathbb{Z}}\in(\mathcal{E}^{o})^{\mathbb{Z}};A(e_{n},e_{n+1})=1\mbox{ }\forall n\in\mathbb{Z}\right\} 
\]
can be naturally identified with the space of all two-sided infinite
path (with a distinguished zeroth edge) in the graph $\mathcal{G}$.
We call $A$ the }\textit{adjacency matrix }\textsf{of the undirected
graph $\mathcal{G}$.}
\begin{rem}
It is not hard to see $A$ is irreducible. Recall that the graph $\mathcal{G}$
is connected, so after we associate two (opposite) directions to each
edge of $\mathcal{G}$, we know that for each pair of vertices of
$\mathcal{G}$ there exists a directed path connecting them. 
\end{rem}
\textsf{Besides, Bowen showed that each two-sided subshift of finite
type $(\Sigma_{A},\sigma)$ can be characterized by an one-sided subshift
of finite type $(\Sigma_{A}^{+},\sigma)$. More precisely, each Hölder
function on $\Sigma_{A}$ (two-sided sequences) is cohomologous to
a Hölder function only depending on the future coordinates (one-sided
sequences). Thus, it is enough to study the one-sided subshift of
finite type $(\Sigma_{A}^{+},\sigma)$, which is what we will do in
the following.}

\textsf{The edge weighting function $l:\mathcal{E}\to\real_{>0}$,
our main player, is clearly well-defined under the directed graph
setting. Moreover, each edge weighting could be identified as a locally
constant function on $\Sigma_{A}^{+}$ by, abusing the notation, $l(\underline{e}):=l(e_{0})$
where $\underline{e}=e_{0}e_{1}....\in\Sigma_{A}^{+}$. Therefore,
$\mathcal{M_{G}}$ can be naturally regarded as a set of positive
locally constant functions on $\Sigma_{A}^{+}$. By definition, we
can rewrite the entropy of the metric graph $(\mathcal{G},l)$ by
\begin{align*}
h(l):= & \lim_{T\to\infty}\frac{1}{T}\log\#\left\{ \gamma;l(\gamma)<T\right\} \\
= & \lim_{T\to\infty}\frac{1}{T}\log\#\left\{ \underline{e}\in\Sigma_{A}^{+};\sigma^{n}(\underline{e})=\underline{e},\mbox{ and}{\displaystyle \sum_{i=0}^{n-1}l(e_{i})}<T\mbox{ for some }n\in\mathbb{N}\right\} \\
= & h_{l}.
\end{align*}
Hence, we have $\mathcal{M}_{\mathcal{G}}^{1}=\left\{ l\in C(\Sigma_{A}^{+});\mbox{ }l>0\mbox{, }l(\underline{e})=l(e_{0})\mbox{ and \ensuremath{h_{l}=1}}\right\} .$}

\begin{lem}
[Lemma 3.1 \cite{Pollicott:2014fs}].\begin{enumerate}

\item The pressure function is analytic on the space of locally continuous
functions.

\item The entropy $h(l)$ of $(\mathcal{G},l)$ is characterized
by $P(-h(l)\cdot l)=0$.

\item The entropy function $\mathcal{M_{G}}\ni l\mapsto h(l)\in\real_{>0}$
varies analytically for $l>0.$ 

\end{enumerate}
\end{lem}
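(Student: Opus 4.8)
The plan is to handle the three assertions separately, since each rests on machinery already in place. Throughout I regard an edge weighting $l$ as a locally constant function on $\Sigma_{A}^{+}$ depending only on the zeroth coordinate, $l(\underline{e})=l(e_{0})$; in particular the admissible weightings range over the finite-dimensional open cone parametrized by the finitely many positive values $l(e)$, $e\in\mathcal{E}^{o}$. For (1), I would use the explicit eigenvalue description of the pressure for such functions. A function of the first coordinate is a fortiori a function of the first two coordinates, so Proposition \ref{prop: locally const fucn} applies and gives $P(f)=\log\beta_{f}$, where $\beta_{f}$ is the maximal (Perron) eigenvalue of the nonnegative matrix $A_{f}(i,j)=A(i,j)e^{f(i,j)}$. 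The entries of $A_{f}$ depend entire-analytically on the parameters $f(i,j)$, and by the Ruelle--Perron--Frobenius theorem (Theorem \ref{thm: RPF}) the eigenvalue $\beta_{f}$ is simple and isolated; hence analytic perturbation theory (equivalently, the analytic implicit function theorem applied to the characteristic polynomial at a simple root) shows $\beta_{f}$ is a real-analytic function of those parameters. Since $\beta_{f}>0$, composing with $\log$ preserves analyticity, giving (1). Alternatively one may simply cite the analyticity statement in Theorem \ref{Thm: analyticity of pressure}.

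Assertion (2) is essentially a restatement of Bowen's formula. The computation immediately preceding the lemma identifies the graph entropy $h(l)$ with the exponential growth rate $h_{l}$ of the weighted periodic orbits of $(\Sigma_{A}^{+},\sigma)$. Applying Bowen's formula (Theorem \ref{thm: Bowen's formula}) to the positive continuous function $f=l$, we obtain $P(-s\cdot l)=0$ if and only if $s=h_{l}=h(l)$, which is exactly the claimed characterization.

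For (3), I would invoke the analytic implicit function theorem. Set $F(l,s):=P(-s\cdot l)$, treating $l$ as the parameter (in the finite-dimensional cone of positive weightings) and $s\in\real_{>0}$ as the unknown. The bilinear map $(l,s)\mapsto -s\,l$ into the H\"older space is jointly analytic, and composing with the analytic pressure from (1) shows $F$ is jointly analytic. By (2) we have $F(l,h(l))=0$, so it remains to verify the non-degeneracy condition $\partial_{s}F\neq 0$ at $s=h(l)$. Taking $\psi_{s}=-s\,l$, so that $\dot{\psi}_{s}=-l$, the first-derivative formula for the pressure (Theorem \ref{thm:(Derivatives-of-the}) gives
\[
\left.\frac{\partial}{\partial s}P(-s\,l)\right|_{s=h(l)}=-\int_{\Sigma_{A}^{+}}l\,\mathrm{d}m_{-h(l)\cdot l}<0,
\]
where strict negativity is forced by $l>0$, which makes the integral strictly positive. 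Hence $\partial_{s}F$ is nonzero and the analytic implicit function theorem yields a unique analytic solution $s=h(l)$, establishing (3).

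The steps (1) and (2) are essentially bookkeeping on top of cited results; the crux is the non-degeneracy input in (3). The one point that genuinely needs care is confirming that $F$ is analytic as a function of the weighting $l$ — which is clean here precisely because $l$ ranges over a finite-dimensional space — and that $\partial_{s}F$ is strictly negative, for which the only essential ingredient is the strict positivity $l>0$ guaranteeing $\int l\,\mathrm{d}m_{-h(l)\cdot l}>0$.
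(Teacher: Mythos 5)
Your proposal is correct and takes essentially the same route as the paper, whose proof is a one-line citation of exactly the three ingredients you use: the analyticity of the pressure for (1), Bowen's formula for (2), and the implicit function theorem (for Banach spaces) for (3). Your verification of the non-degeneracy condition
$\left.\partial_{s}P(-s\,l)\right|_{s=h(l)}=-\int l\,\mathrm{d}m_{-h(l)\cdot l}<0$
is precisely the detail the paper leaves implicit, and your matrix-eigenvalue argument for (1) is a sound finite-dimensional alternative to citing the analyticity theorem directly.
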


\begin{proof}
The first two assertions are coming from Theorem\ref{Thm: analyticity of pressure}
and Theorem \ref{thm: Bowen's formula}, respectively. The last one
is a consequence of the implicit function theorem (for Banach spaces). 
\end{proof}

\subsection{Two pressure type metrics\label{sub: pressure type on gh and formula}}

\textsf{Following the discussion in Section \ref{sub:Pressure-metric},
we can similarly define the pressure type metrics on $\mathcal{M}_{\mathcal{G}}^{1}$.
Specifically, because} $l\in\mathcal{M}_{\mathcal{G}}^{1}$\textsf{,
we know $h(l)=1$ and $P(-h(l)l)=P(-l)=0$. Hence for each $l\in\mathcal{M}_{\mathcal{G}}^{1}$
there exists an equilibrium state $m_{-l}$ with respect to the function
$-l$.}

\begin{defn}
We define the tangent space to $\mathcal{M_{G}}^{1}$ at $l$ by 
\[
T_{l}\mathcal{M}_{\mathcal{G}}^{1}=\left\{ \phi:\Sigma_{A}^{+}\to\real;\mbox{ \ensuremath{\phi(\underline{e})=\phi(e_{0})}}\mbox{ for all \ensuremath{e\in\mathcal{E}^{0}}, and}\int\phi\mathrm{d}m_{-l}=0\right\} .
\]

\end{defn}

\begin{defn}
For $\phi\in T_{l}\mathcal{M}_{\mathcal{G}}^{1}$ then we define two
pressure type metrics: 
\[
\left\Vert \phi\right\Vert _{P}^{2}:=\mathrm{Var}(\phi,m_{-l})
\]
and 
\[
\left\Vert \phi\right\Vert _{WP}^{2}:=\frac{\mathrm{Var}(\phi,m_{-l})}{\int l\mbox{d}m_{-l}}.
\]
The former one is called the \textit{pressure metric} on $\mathcal{M}_{\mathcal{G}}^{1}$
and denoted by $||\cdot||_{P}$, and the latter one is called the
\textit{Weil-Petersson metric} on $\mathcal{M}_{\mathcal{G}}^{1}$
and denoted by $||\cdot||_{WP}$. We can then define the length of
every continuously differentiable curve $\gamma:[0,1]\to\mathcal{M_{G}}^{1}$
by
\[
L_{P}(\gamma)=\int_{0}^{1}||\dot{\gamma}||_{P}\mathrm{d}t\mbox{ and }L_{WP}(\gamma)=\int_{0}^{1}||\dot{\gamma}||_{WP}\mathrm{d}t
\]
 and thus define two path space metrics on $\mathcal{M}_{\mathcal{G}}^{1}$
by $d_{WP}(l_{1},l_{2})=\inf_{\gamma}\{L_{WP}(\gamma)\}$ and $d_{P}(l_{1},l_{2})=\inf_{\gamma}\{L_{P}(\gamma)\}$,
where the infimum is taken over all continuously differentiable curves
with $\gamma(0)=l_{1}$ and $\gamma(1)=l_{2}$. 
\end{defn}

\begin{rem}
\begin{enumerate}-

\item One should notice that our Weil-Petersson metric for graphs
is different from the Weil-Petersson metric for graphs mentioned in
Pollicott and Sharp's paper \cite{Pollicott:2014fs}. In fact, their
Weil-Petersson type metric is our pressure metric. Readers should
be careful about this difference. 

\item Another reason that we consider the new pressure type, $||\cdot||_{WP}$,
is because it also coincides with the definition of the pressure metric
on the Hitchin component introduced by Bridgeman, Canary, Labourie
and Sambarino in \cite{Bridgeman:2013to}.

\end{enumerate}
\end{rem}

\textsf{Because $l$ is a locally constant function, most of the above
formulas of pressure type metrics, variances, and equilibrium states
could be simplified quite a bit. The remaining of this subsection
dedicates to expressing those quantities in simpler forms.}

\textsf{By Proposition \ref{prop: locally const fucn}, we consider
the matrix $A_{-l}$ associating to $-l$ defined by $A_{-l}(i,j)=A(i,j)e^{-l(i)}$.
Notice that because $h(l)=1$, $A_{-l}$ has a simple maximum eigenvalue
$1$. Let $\mathbf{v}_{-l}$ be the left eigenvector of $A_{-l}$
w.r.t. $1$, i.e. $\mathbf{v}_{-l}\cdot A_{-l}=\mathbf{v}_{-l}$.
Then we construct the column stochastic matrix $P_{-l}$ associating
with $-l$: 
\begin{equation}
P_{-l}(i,j)=\frac{A(i,j)\mathbf{v}_{-l}(i)e^{f(i)}}{\mathbf{v}_{-l}(j)},\label{eq: normalized weight matrix}
\end{equation}
and let $\mathbf{p}_{-l}$ be the unit right eigenvector of $P_{-l}$
w.r.t. $1$, i.e. $P_{-l}\mathbf{p}_{-l}=\mathbf{p}_{-l}$ and $\sum_{i}\mathbf{p}_{-l}(i)=1$.
Thus we know that the equilibrium state $m_{-l}$ is given by 
\[
m_{-l}[i_{0},i_{1},...,i_{n}]=P_{-l}(i_{0},i_{1})\cdot...\cdot P_{-l}(i_{n-1},i_{n})\mathbf{p}_{-l}(i_{n}).
\]
When there is no ambiguity, for short, we will drop the subscript
$l$ from $\mathbf{v}_{-l}$,$P_{-l}$ and $\mathbf{p}_{-l}$ and
denote them by $\mathbf{v}$, $P$ and $\mathbf{p}$, respectively. }

\begin{prop}
\label{Prop: var formula graph}We have \begin{enumerate}

\item For each $l\in\mathcal{M}_{\mathcal{G}}^{1}$, the tangent
space to $\mathcal{M}_{\mathcal{G}}^{1}$ at $l$ is 
\[
T_{l}\mathcal{M_{G}}^{1}=\left\{ \phi\in C(\Sigma_{A}^{+});\phi(\underline{e})=\phi(e_{0})\mbox{ and }{\displaystyle \sum_{e\in\mathcal{E}^{o}}\phi(e)\mathbf{p}_{e}}=0\right\} ,
\]
where $\mathbf{p}_{e}=\mathbf{p}(e)$.

\item Let $l_{t}$ be a smooth path in $\mathcal{M_{G}}^{1}$, and
$g_{t}$ be the (up to cohomology) normalized function cohomologous
to $-l_{t}$, i.e. $\mathcal{L}_{g_{t}}1=1$ and $-l_{t}=g_{t}+h_{t}\circ\sigma-h_{t}$,
and let $P(i,j)=P_{-l}(i,j)=A(i,j)e^{g_{0}(i,j)}$, and $\mathbf{p}$
be the right eigenvector of $P$ w.r.t. $1$ (i.e. $P\mathbf{p}=\mathbf{p}$)
then

\begin{align}
\mathrm{Var}(-\dot{l}_{0},m_{-l_{0}}) & =\int(\ddot{l}_{0})\mathrm{d}m_{-l_{0}}=\sum_{i}\ddot{l}_{0}(i)\mathbf{p}_{i}\label{eq: var twice derivateve}\\
 & =\int(\dot{g}_{0})^{2}\mathrm{d}m_{g_{0}}=\sum_{i,j}\left(\dot{g}_{0}(i,j)\right)^{2}P_{ij}\mathbf{p}_{j}.\label{eq: var squre}
\end{align}

\end{enumerate}
\end{prop}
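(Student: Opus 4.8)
The plan is to prove the two items separately while recycling a single elementary computation---the value of the equilibrium measure on short cylinders---in both. The whole argument rests on the explicit description of $m_{-l}$ in terms of the stochastic matrix $P$ and its stationary vector $\mathbf{p}$ supplied by Proposition \ref{prop: locally const fucn}, together with the pressure-derivative machinery of Theorems \ref{thm:(Jordan-Pollicott-)} and \ref{thm:(Derivatives-of-the} and the metric formula of Proposition \ref{prop: pressure metric}.

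For item (1) I would begin from the definition of $T_l\mathcal{M}_{\mathcal{G}}^1$ given just before the statement, where the only nontrivial constraint on a first-coordinate function $\phi$ is $\int\phi\,\mathrm{d}m_{-l}=0$. The task is to make this integral explicit. Since $\phi(\underline{e})=\phi(e_0)$, the integral collapses to $\sum_{e}\phi(e)\,m_{-l}[e]$, so everything reduces to the mass of a one-symbol cylinder. Starting from the cylinder formula $m_{-l}[i_0,\dots,i_n]=P(i_0,i_1)\cdots P(i_{n-1},i_n)\mathbf{p}(i_n)$ of Proposition \ref{prop: locally const fucn} and summing over the second coordinate, the eigen-relation $P\mathbf{p}=\mathbf{p}$ collapses this to $m_{-l}[i]=\mathbf{p}_i$. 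Hence $\int\phi\,\mathrm{d}m_{-l}=\sum_{e}\phi(e)\mathbf{p}_e$, and the defining constraint becomes exactly $\sum_{e}\phi(e)\mathbf{p}_e=0$, which is item (1).

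For item (2) I would assemble the chain of equalities from three ingredients. First, the path $\phi_t:=-l_t$ lies in the pressure-zero space $\mathcal{P}(\Sigma_A^+)$ because $h(l_t)=1$, so Proposition \ref{prop: pressure metric} applies and, after the two sign flips cancel (note $\ddot{\phi}_0=-\ddot{l}_0$), yields $\mathrm{Var}(-\dot{l}_0,m_{-l_0})=\int\ddot{l}_0\,\mathrm{d}m_{-l_0}$; since $\ddot{l}_0$ is again a first-coordinate function, the cylinder computation of item (1) rewrites this as $\sum_i\ddot{l}_0(i)\mathbf{p}_i$, giving the first line of the display. The normalized representative $g_t$ enters in the second line. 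I would first differentiate $P(g_t)\equiv 0$ at $t=0$ and invoke the first-derivative formula (\ref{eq:1}) of Theorem \ref{thm:(Derivatives-of-the} to force $\int\dot{g}_0\,\mathrm{d}m_{g_0}=0$; because $\mathcal{L}_{g_0}1=1$ by construction, Theorem \ref{thm:(Jordan-Pollicott-)} then converts the variance into a bare second moment $\mathrm{Var}(\dot{g}_0,m_{g_0})=\int(\dot{g}_0)^2\,\mathrm{d}m_{g_0}$. To identify this with the variance already computed, differentiate the cohomology relation $-l_t=g_t+h_t\circ\sigma-h_t$ at $t=0$ to get $-\dot{l}_0=\dot{g}_0+\dot{h}_0\circ\sigma-\dot{h}_0$, so the two integrands differ by a coboundary; since the variance against a fixed $\sigma$-invariant measure is coboundary-invariant and $m_{-l_0}=m_{g_0}$ (cohomologous functions share an equilibrium state), we obtain $\mathrm{Var}(-\dot{l}_0,m_{-l_0})=\mathrm{Var}(\dot{g}_0,m_{g_0})$. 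Finally, as $g_0$ and hence $\dot{g}_0$ depend on the first two coordinates, the integral evaluates on two-symbol cylinders, $m_{g_0}[i,j]=P_{ij}\mathbf{p}_j$, as $\sum_{i,j}(\dot{g}_0(i,j))^2P_{ij}\mathbf{p}_j$, closing the display.

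The genuinely delicate points are the two ``soft'' facts I would state cleanly rather than the arithmetic. The first is that the normalized family $g_t$ (and the transfer function $h_t$) can be chosen to depend smoothly on $t$, so that $\dot{g}_0$ and $\dot{h}_0$ exist; this I would extract from the explicit expression $g(i,j)=-l(i)+\log\mathbf{v}_{-l}(i)-\log\mathbf{v}_{-l}(j)$ of Proposition \ref{prop: locally const fucn} (here $\beta=1$ since $h(l)=1$, and $h=\log\mathbf{v}_{-l}$) together with the analytic dependence of the Perron eigendata $\mathbf{v}_{-l}$ on $l$. The second, and the main obstacle to phrase carefully, is the coboundary-invariance of the variance: one must check $\mathrm{Var}(w+u\circ\sigma-u,\mu)=\mathrm{Var}(w,\mu)$, which holds because the Birkhoff sums of the two functions differ only by the telescoping term $u\circ\sigma^n-u$, a uniformly bounded quantity that contributes nothing to the $\tfrac1n$-normalized limit defining the variance. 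Everything else is bookkeeping with the cylinder formula.
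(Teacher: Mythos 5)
Your proposal is correct and follows essentially the same route as the paper: the cylinder-measure formulas of Proposition \ref{prop: locally const fucn} for both items, the pressure-derivative identities (Theorem \ref{thm:(Derivatives-of-the}, equivalently Proposition \ref{prop: pressure metric}) for the first line of (\ref{eq: var twice derivateve}), and the combination of cohomology invariance with Theorem \ref{thm:(Jordan-Pollicott-)} for (\ref{eq: var squre}). The only difference is that you spell out details the paper leaves implicit --- the computation $m_{-l}[i]=\mathbf{p}_i$ behind ``obvious'' item (1), the smooth $t$-dependence of $g_t$, and the coboundary-invariance of the variance --- which strengthens rather than changes the argument.
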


\begin{proof}
(1): It is obvious. 

(2): By Proposition \ref{prop: locally const fucn}, we know that
$m_{-l_{0}}[e_{i}]=\mathbf{p}_{e_{i}}$ for each $i$ and $m_{-l_{0}}[e_{i},e_{j}]=P(e_{i},e_{j})\mathbf{p}(e_{j})$
for all $e_{i},e_{j}\in\mathcal{E}^{0}$.

For equation $($\ref{eq: var twice derivateve}$)$, by Theorem \ref{thm:(Derivatives-of-the},
we know 
\[
\mathrm{Var}(-\dot{l}_{0},m_{-l_{0}})=\int(\ddot{l}_{0})\mathrm{d}m_{-l_{0}}=\sum_{i}\int_{[i]}(\ddot{l}_{0})\mbox{d}m_{-l_{0}}=\sum_{i}\ddot{l}_{0}(i)\mathbf{p}_{i}.
\]

For equation $($\ref{eq: var squre}$)$, since $P(-h(l)l)=0$ and
by Theorem \ref{thm:(Derivatives-of-the}, we have $\int\dot{l}_{0}\mbox{d}m_{-l_{0}}=0$.
Moreover, because for each Hölder continuous function the equilibrium
state is unique in each Livšic cohomology class we know that $m_{-l_{0}}=m_{g_{0}}$.
Since $-\dot{l}_{0}\sim\dot{g}_{0}$ and $-\ddot{l}_{0}\sim\ddot{g}_{0}$,
we have $\int\dot{l}_{0}\mbox{d}m_{-l_{0}}=\int\dot{g}_{0}\mbox{d}m_{g_{0}}=0$
and $\int-\ddot{l}_{0}\mathrm{d}m_{-l_{0}}=\int\ddot{g}_{0}\mathrm{d}m_{g_{0}}.$
Because $\mathcal{L}_{g_{0}}1=1$ and $\int\dot{g}_{0}\mbox{d}m_{g_{0}}=0$,
by Theorem \ref{thm:(Jordan-Pollicott-)} we have 
\[
\mathrm{Var}(-\dot{l}_{0},m_{-l_{0}})=\mathrm{Var}(\ddot{g}_{0},m_{g_{0}})=\int(\dot{g}_{0})^{2}\mathrm{d}m_{g_{0}}=\sum_{i.j}\int_{[i,j]}(\dot{g}_{0})^{2}\mathrm{d}m_{g_{0}}=\sum_{i,j}\left(\dot{g}_{0}(i,j)\right)^{2}P_{ij}\mathbf{p}_{j}.
\]

\end{proof}

\begin{rem}
\label{rem: mistake in PS}Equation $($\ref{eq: var squre}$)$ is
very close to the formula given in Lemma 3.3 \cite{Pollicott:2014fs}.
However, Lemma 3.3 in \cite{Pollicott:2014fs} is NOT true for all
functions in the tangent space $T_{l}\mathcal{M}_{\mathcal{G}}^{1}$.
It is only true for normalized functions, i.e. $f\in T_{l}\mathcal{M}_{\mathcal{G}}^{1}$
and $\mathcal{L}_{f}1=1$. 
\end{rem}

\textsf{Notice that it is convenient to interpret $\mathcal{M}_{\mathcal{G}}^{1}$
as a hypersurface in $\mathcal{M_{G}}$. Because each edge weighting
function $l$ is depending on the weighting (i.e. length) of each
edge, it can be regarded as a $k-$variable function $l=l(e_{1},e_{2},...,e_{k})$
where $e_{1}$, $e_{2}$,...,$e_{k}$ are edges of $\mathcal{G}$.
Thus, in this perspective $\mathcal{M_{G}}$ is $\real_{>0}^{k}$.
Moreover, the condition $h(l)=1$ gives us an equation of} $e_{1},e_{2}$,...,
\textsf{and} $e_{k}$, \textsf{so $\mathcal{M}_{\mathcal{G}}^{1}$
is a co-dimension one submanifold in $\mathcal{M_{G}}.$}

\section{Examples\label{sec:Examples}}

\textsf{For examples in this section, we follow the recipe below to
calculate the pressure type metrics. Let $\mathcal{G}$ be a undirected
finite graph and $l$ be an edge weighting function then}

\begin{itemize}

\item\textsf{ First, associate two opposite directions to each edge
as we defined in Section \ref{sub:From-undirected-graphs}, then write
down the adjacency matrix $A$ and the wighted adjacency matrix $A_{-h(l)l}$
associated with $l$.}

\item \textsf{Second, solve the equation $h(l)=1$. Explicitly, since
$h(l)=1$ means that $1$ is the an eigenvalue of the matrix $A_{-l}$,
the characteristic polynomial of $A_{-l}$ is a function of edges
$e_{1}$,...,$e_{k}$ of $\mathcal{G}$, i.e. $\det(A_{-l}-\mathrm{Id})=0$.
By solving the characteristic polynomial, we can write $l=(l(e_{1}),l(e_{2}),...,l(e_{k}))$
where $l(e_{k})=w(l(e_{1}),...,l(e_{k-1}))$ is an analytic function
depending $l(e_{1}),...,l(e_{k-1})$.}

\item \textsf{Third, compute the right eigenvector $\mathbf{v}_{-l}$
of $A_{-l}$ w.r.t the eigenvalue $1$.}

\item \textsf{Fourth, write down the normalized weighted matrix $P_{-l}$
in $($\ref{eq: normalized weight matrix}$)$, and compute the equilibrium
state $\mathbf{p}$ of $-l$, i.e. the unit left eigenvector $\mathbf{p}$
of $P_{-l}$ w.r.t. $1$.}

\item \textsf{Fifth, consider $l=(l(e_{1}),l(e_{2}),...,w(l(e_{1}),l(e_{2}),...,l(e_{k})))$
as a parametrization of $\mathcal{M}_{\mathcal{G}}^{1}$, and then
compute the tangent vectors $\frac{\partial}{\partial l(e_{i})}$
for $1\leq i\leq k.$}

\item \textsf{Last, compute the pressure metric and the Weil-Petersson
metric of $\frac{\partial}{\partial l(e_{i})}$ for $1\leq i\leq k.$}

\end{itemize}

\textsf{We notice that graphs of a belt buckle $\mathcal{G}_{2}$,
a dumbbell $\mathcal{G}_{3}$ and a three-petal rose $\mathcal{G}_{3}$
are graphs with 3 edges. Therefore the moduli spaces $\mathcal{M}_{\mathcal{G}_{2}}^{1}$,
$\mathcal{M}_{\mathcal{G}_{3}}^{1}$ and $\mathcal{M}_{\mathcal{G}_{3}}^{1}$
are two-dimensional manifolds sitting in $\mathbb{R}^{3}$. Moreover,
in the Riemannian geometry setting, it is natural to study the important
geometric invariants--curvatures. In dimension two, it is the Gaussian
curvature.}

\textsf{Before we start working on examples, we recall two useful
asymptotic notations that $A(x)\sim B(x)$ means ${\displaystyle \lim\frac{A(x)}{B(x)}=1}$
and $A(x)\asymp B(x)$ means there exist positive constants $c$ and
$c'$ such that $c<\frac{A(x)}{B(x)}<c'$ . }

\subsection{Figure 8 graphs}

\textsf{The first example is a figure 8 graph which we denote by $\mathcal{G}_{1}$.
The picture below is a figure 8. In the sake of brevity, we denote
$l(e_{1})$ and $l(e_{2})$ by $x$ and $y$, respectively. }

\begin{center}\includegraphics[scale=0.8]{figure_8} \end{center}

\textsf{Following the recipe, we have }

\begin{itemize}

\item$A=\left(\begin{array}{cccc}
1 & 1 & 0 & 1\\
1 & 1 & 1 & 0\\
0 & 1 & 1 & 1\\
1 & 0 & 1 & 1
\end{array}\right)$ \textsf{and} $A_{-l}=\left(\begin{array}{cccc}
e^{-x} & e^{-x} & 0 & e^{-x}\\
e^{-y} & e^{-y} & e^{-y} & 0\\
0 & e^{-x} & e^{-x} & e^{-x}\\
e^{-y} & 0 & e^{-y} & e^{-y}
\end{array}\right).$

\item$h(l)=1$ $\implies$$\det(\mathrm{Id}-A_{-l})=0$$\implies e^{-y}=\frac{1-e^{-x}}{1+3e^{-x}}.$ 

\item\textsf{ }$\mathbf{v}=\left(\frac{2}{1+3e^{-x}},1,\frac{2}{1+3e^{-x}},1\right).$

\item $P=\left(\begin{array}{cccc}
e^{-x} & \frac{2e^{-x}}{1+3e^{-x}} & 0 & \frac{2e^{-x}}{1+3e^{-x}}\\
\frac{1-e^{-x}}{2} & \frac{1-e^{-x}}{1+3e^{-x}} & \frac{1-e^{-x}}{2} & 0\\
0 & \frac{2e^{-x}}{1+3e^{-x}} & e^{-x} & \frac{2e^{-x}}{1+3e^{-x}}\\
\frac{1-e^{-x}}{2} & 0 & \frac{1-e^{-x}}{2} & \frac{1-e^{-x}}{1+3e^{-x}}
\end{array}\right)$ \textsf{and }$\mathbf{p}=\left(\begin{array}{c}
\mathbf{p}_{1}(x)\\
\mathbf{p}_{2}(x)\\
\mathbf{p}_{3}(x)\\
\mathbf{p}_{4}(x)
\end{array}\right)=\left(\begin{array}{c}
\frac{2e^{x}}{6e^{x}+e^{2x}-3}\\
\frac{2e^{x}+e^{2x}-3}{2\left(6e^{x}+e^{2x}-3\right)}\\
\mathbf{p}_{1}(x)\\
\mathbf{p}_{2}(x)
\end{array}\right)$.

\item \textsf{Consider the path} $\mathcal{M}_{\mbox{\ensuremath{\mathcal{G}}}_{1}}^{1}\ni l=(l_{1},l_{2})=(x,-\log\frac{1-e^{-x}}{1+3e^{-x}})=:c(x)$. 

\item $\dot{c}(x)=(1,-\frac{4e^{x}}{\left(e^{x}-1\right)\left(e^{x}+3\right)})$\textsf{
and} $\ddot{c}(x)=(0,\frac{4e^{x}\left(e^{2x}+3\right)}{\left(e^{x}-1\right)^{2}\left(e^{x}+3\right)^{2}}).$

\end{itemize}

\begin{prop}
\label{prop: fig 8 press incomplete}The moduli space of figure $8$
graphs is \textbf{incomplete} under the pressure metric $||\cdot||_{P}$.
i.e., $(\mathcal{M}_{\mathcal{G}_{1}}^{1},||\cdot||_{P})$ is incomplete.
\end{prop}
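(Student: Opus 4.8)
The plan is to exploit the fact that $\mathcal{M}_{\mathcal{G}_1}^1$ is one-dimensional: the entropy-one constraint has already been solved to give the analytic parametrization $c(x)=\bigl(x,\,-\log\frac{1-e^{-x}}{1+3e^{-x}}\bigr)$, with $x$ ranging over $(0,\infty)$, since positivity of both edge lengths forces exactly $x>0$ (and then $y>0$ is automatic). To prove incompleteness it therefore suffices to produce a Cauchy sequence in $(\mathcal{M}_{\mathcal{G}_1}^1,\|\cdot\|_P)$ with no limit, and the natural candidate is $c(x_n)$ with $x_n\to\infty$, whose limiting configuration $e_1\to\infty$, $e_2\to0$ leaves the space. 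Concretely, I would show that the $\|\cdot\|_P$-length of the ray $x\mapsto c(x)$ is finite near $x=\infty$, i.e. $\int_{x_0}^{\infty}\|\dot c(x)\|_P\,\mathrm{d}x<\infty$; then $d_P\bigl(c(x_m),c(x_n)\bigr)\le\int_{x_m}^{x_n}\|\dot c\|_P\,\mathrm{d}x\to0$ as $x_m,x_n\to\infty$, exhibiting a non-convergent Cauchy sequence.

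The key computation is a closed form for $\|\dot c(x)\|_P$. By Proposition \ref{Prop: var formula graph} (which itself rests on Proposition \ref{prop: pressure metric}), $\|\dot c(x)\|_P^2=\mathrm{Var}(\dot c,m_{-l})=\sum_i\ddot l_0(i)\,\mathbf{p}_i$. Reading off the directed-edge data, the weighting $l_0$ equals $x$ on the two directions $\{1,3\}$ of $e_1$ and $y(x)$ on the two directions $\{2,4\}$ of $e_2$, so $\ddot l_0$ vanishes on $\{1,3\}$ and equals $\ddot y(x)=\frac{4e^x(e^{2x}+3)}{(e^x-1)^2(e^x+3)^2}$ on $\{2,4\}$. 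Since $\mathbf{p}_4=\mathbf{p}_2$, the sum collapses to $\|\dot c(x)\|_P^2=2\,\ddot y(x)\,\mathbf{p}_2(x)$. I would then substitute $\mathbf{p}_2(x)=\frac{2e^x+e^{2x}-3}{2(6e^x+e^{2x}-3)}$ and simplify via the factorization $2e^x+e^{2x}-3=(e^x-1)(e^x+3)$, which cancels against the denominator of $\ddot y$ and leaves the single clean expression $\|\dot c(x)\|_P^2=\frac{4e^x(e^{2x}+3)}{(e^x-1)(e^x+3)(6e^x+e^{2x}-3)}$.

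Finally I would run the asymptotics. As $x\to\infty$ the numerator is $\sim4e^{3x}$ and the denominator is $\sim e^{4x}$, so $\|\dot c(x)\|_P^2\sim4e^{-x}$, hence $\|\dot c(x)\|_P\asymp e^{-x/2}$, which is integrable on $(x_0,\infty)$; this yields the finite-length ray and the desired non-convergent Cauchy sequence, proving incompleteness. (The same formula gives $\|\dot c(x)\|_P^2\sim1/x$ as $x\to0^+$, so the opposite end $e_1\to0$, $e_2\to\infty$ is likewise at finite distance, but either end alone suffices.) The two honest steps here — the algebraic cancellation through $(e^x-1)(e^x+3)$ and the limit estimate — are routine, so I expect the main obstacle to be conceptual rather than technical: ensuring that $\|\cdot\|_P$ is evaluated on the correct tangent direction (the genuinely one-parameter family $c$, whose $\ddot l_0$ is supported only on the $e_2$-directions), and that incompleteness is deduced from a finite-length escape to a boundary configuration rather than from any intrinsic degeneration of the variance itself.
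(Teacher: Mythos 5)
Your proposal is correct and follows essentially the same route as the paper: you invoke the same variance formula (Proposition \ref{Prop: var formula graph}) to get $\|\dot c(x)\|_P^2 = 2\,\ddot y(x)\,\mathbf{p}_2(x) = \frac{4e^x(e^{2x}+3)}{(e^x-1)(e^x+3)(6e^x+e^{2x}-3)}$, which agrees with the paper's expression, and then deduce incompleteness from a finite-length escape to the boundary of $\mathcal{M}_{\mathcal{G}_1}^1$. The only (immaterial) difference is that you run the ray toward $x\to\infty$, where $\|\dot c\|_P \asymp e^{-x/2}$, whereas the paper runs it toward $x\to 0$, where $\|\dot c\|_P \asymp x^{-1/2}$; both tails have finite length, as your parenthetical already notes.
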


\begin{proof}
By Proposition \ref{Prop: var formula graph}, we have 
\[
||\dot{c}(x)||_{P}^{2}=\mathrm{Var}(\dot{c}(x),m_{c(x)})=2\cdot\frac{4e^{x}\left(e^{2x}+3\right)}{\left(e^{x}-1\right)^{2}\left(e^{x}+3\right)^{2}}\cdot\mathbf{p_{2}}(x)=\frac{4e^{x}\left(e^{2x}+3\right)}{-24e^{x}+6e^{2x}+8e^{3x}+e^{4x}+9}.
\]
When $x$ is close to zero, we have the expansion 
\[
||\dot{c}(x)||_{P}^{2}=\frac{1}{x}-\frac{5}{4}+\frac{85x}{48}+o(x^{2}).
\]

This estimate shows that when $x\to0$ 
\[
\sqrt{\int||\dot{c}(x)||_{P}^{2}\mbox{d}m_{c(x)}}\asymp\frac{1}{\sqrt{x}}.
\]
Finally, since $\int_{0}^{1}\frac{1}{\sqrt{x}}dx$ is convergent we
see that the metric is incomplete. i.e. the curve arrives at $l_{1}=x=0$
in finite time with respect to this metric. 
\end{proof}

\begin{rem}
There is a hidden natural condition $l_{2}>0$ that we have to take
into account. This condition was missing in Section 6 \cite{Pollicott:2014fs}.
For $\mathcal{G}_{1}$ the condition is $1-e^{-x}>0$ and $\frac{1-e^{-x}}{1+3e^{-x}}<1$,
which is equivalent to $x>0$.
\end{rem}

\begin{prop}
\label{prop: fig 8 WP complete }The moduli space of figure $8$ graphs
is \textbf{complete} under the Weil-Petersson metric $||\cdot||_{WP}$.
i.e.,$(\mathcal{M}_{\mathcal{G}_{1}}^{1},||\cdot||_{WP})$ is complete.
\end{prop}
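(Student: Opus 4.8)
The plan is to exploit that $\mathcal{M}_{\mathcal{G}_1}^1$ is one-dimensional, parametrized by $x\in(0,\infty)$ through the curve $c(x)=(x,-\log\frac{1-e^{-x}}{1+3e^{-x}})$, so that completeness of $(\mathcal{M}_{\mathcal{G}_1}^1,||\cdot||_{WP})$ reduces to showing that the $WP$-arclength of $c$ diverges as $x$ approaches each endpoint of $(0,\infty)$. Since the density is smooth and strictly positive on the open interval, there is no interior incompleteness, and by the one-dimensional form of the Hopf--Rinow criterion the metric is complete precisely when both ends are at infinite distance. Because $||\dot{c}(x)||_{WP}^2=||\dot{c}(x)||_{P}^2/\int l\,\mathrm{d}m_{-l}$ and $||\dot{c}(x)||_{P}^2$ was already computed in Proposition \ref{prop: fig 8 press incomplete}, the only new ingredient is the normalizing factor. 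First I would compute it: since $l$ is locally constant and $m_{-l}[i]=\mathbf{p}_i$, we have $\int l\,\mathrm{d}m_{-l}=\sum_i l(i)\mathbf{p}_i=2x\,\mathbf{p}_1(x)+2y\,\mathbf{p}_2(x)$, where $y=-\log\frac{1-e^{-x}}{1+3e^{-x}}$ and $\mathbf{p}_1,\mathbf{p}_2$ are the entries listed above.

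Next I would extract the asymptotics of $||\dot{c}(x)||_{WP}$ at each end. As $x\to\infty$ one finds $||\dot{c}(x)||_{P}^2\sim 4e^{-x}$ and, using $\mathbf{p}_1\sim 2e^{-x}$, $\mathbf{p}_2\to\tfrac12$, $y\sim 4e^{-x}$, the normalizing factor satisfies $\int l\,\mathrm{d}m_{-l}\sim 4xe^{-x}$; hence $||\dot{c}(x)||_{WP}\asymp x^{-1/2}$, and since $\int^{\infty}x^{-1/2}\,\mathrm{d}x$ diverges the end $x\to\infty$ is at infinite distance. The delicate end is $x\to 0^{+}$: there $||\dot{c}(x)||_{P}^2\sim 1/x$, while $\mathbf{p}_1\to\tfrac12$, $\mathbf{p}_2\sim\tfrac{x}{2}$ and $y\sim-\log x$, so that $\int l\,\mathrm{d}m_{-l}\sim x(-\log x)$. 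Combining these yields $||\dot{c}(x)||_{WP}\asymp \frac{1}{x\sqrt{-\log x}}$.

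Finally I would verify the divergence of the arclength as $x\to 0^{+}$. The substitution $u=-\log x$ turns $\int_{0}\frac{\mathrm{d}x}{x\sqrt{-\log x}}$ into $\int^{\infty}u^{-1/2}\,\mathrm{d}u$, which diverges; thus $x\to 0^{+}$ is also at infinite $WP$-distance. With both boundary points infinitely far and the density positive and smooth in between, I conclude that $(\mathcal{M}_{\mathcal{G}_1}^1,||\cdot||_{WP})$ is complete.

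The main obstacle is the endpoint $x\to 0^{+}$. The normalization by $\int l\,\mathrm{d}m_{-l}\sim x(-\log x)$ changes the $WP$-density from the $x^{-1/2}$ behaviour of the pressure metric---whose integral near $0$ \emph{converges}, giving the incompleteness of Proposition \ref{prop: fig 8 press incomplete}---to $\frac{1}{x\sqrt{-\log x}}$, whose integral diverges only barely. The crux is therefore to track the logarithmic factor correctly: one must confirm that $y\sim-\log x$ and $\mathbf{p}_2\sim x/2$ combine to give exactly $x(-\log x)$, since a normalizing factor behaving like a genuine power $x^{1+\varepsilon}$ would instead force incompleteness at this end.
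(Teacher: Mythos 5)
Your proposal is correct and follows essentially the same route as the paper: parametrize the one-dimensional space $\mathcal{M}_{\mathcal{G}_1}^1$ by the curve $c(x)$, normalize $||\dot{c}(x)||_{P}^{2}$ by $\int l\,\mathrm{d}m_{-l}=2x\mathbf{p}_{1}(x)+2y\,\mathbf{p}_{2}(x)$, and extract the asymptotics $||\dot{c}(x)||_{WP}^{2}\asymp\frac{-1}{x^{2}\log x}$ as $x\to0^{+}$ and $\asymp\frac{1}{x}$ as $x\to\infty$, which are exactly the estimates in the paper's proof, yielding divergent arclength at both ends. The only cosmetic difference is that the paper writes down the closed-form expression for $||\dot{c}(x)||_{WP}^{2}$ before reading off these asymptotics, whereas you assemble them from the asymptotics of $\mathbf{p}_{1}$, $\mathbf{p}_{2}$, $y$, and $||\dot{c}||_{P}^{2}$ directly.
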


\begin{proof}
Continue with the computation in the previous proposition, we know
that 
\[
||\dot{c}(x)||_{WP}^{2}=\frac{\mathrm{Var}(\dot{c}(x),m_{c(x)})}{{\displaystyle 2x\mathbf{p_{1}}(x)+2c(x)\mathbf{p_{2}}(x)}}=-\frac{4e^{x}\left(e^{2x}+3\right)}{\left(e^{x}-1\right)\left(e^{x}+3\right)\left(\left(2e^{x}+e^{2x}-3\right)\log\left(\frac{e^{x}-1}{e^{x}+3}\right)-4e^{x}x\right)}.
\]
When $x$ is close to zero, we know that 
\[
||\dot{c}(x)||_{WP}^{2}\asymp\frac{-1}{x^{2}\log x},
\]
and when $x$ tends to infinity we have 
\[
||\dot{c}(x)||_{WP}^{2}\asymp\frac{1}{x}.
\]
 This shows that the curve arrives the boundary of $\mathcal{M}_{\mathcal{G}_{1}}^{1}$,
i.e. $x=0$ and $x=\infty$, in infinite time. Because $\mathcal{M}_{\mathcal{G}_{1}}^{1}=\{(x,c(x));x\in(0,\infty)\}$
is a one dimensional smooth manifold, we can conclude it is complete. 
\end{proof}

\subsection{Belt buckles}

\textsf{The second example is a graph with two vertices, connected
to each other by three edges which we denote by $\mathcal{G}_{2}$
and call it a belt buckle. The picture below is a picture of a belt
buckle. For brevity, we denote $l(e_{1})$, $l(e_{2})$ and $l(e_{3})$
by $x$, $y$ and $z$, respectively. }

\begin{center}\includegraphics[scale=0.8]{beltbuckle} \end{center}

\textsf{Following the recipe, we have }

\begin{itemize}

\item$A=\left(\begin{array}{cccccc}
0 & 0 & 0 & 0 & 1 & 1\\
0 & 0 & 0 & 1 & 0 & 1\\
0 & 0 & 0 & 1 & 1 & 0\\
0 & 1 & 1 & 0 & 0 & 0\\
1 & 0 & 1 & 0 & 0 & 0\\
1 & 1 & 0 & 0 & 0 & 0
\end{array}\right)$ $A_{-l}=\left(\begin{array}{cccccc}
0 & 0 & 0 & 0 & e^{-x} & e^{-x}\\
0 & 0 & 0 & e^{-y} & 0 & e^{-y}\\
0 & 0 & 0 & e^{-z} & e^{-z} & 0\\
0 & e^{-x} & e^{-x} & 0 & 0 & 0\\
e^{-y} & 0 & e^{-y} & 0 & 0 & 0\\
e^{-z} & e^{-z} & 0 & 0 & 0 & 0
\end{array}\right).$

\item$h(l)=1$$\implies$ $\det(A_{-l}-\mathrm{Id})=0$$\implies$$e^{-z}=\frac{1-e^{-x-y}}{2e^{-x-y}+e^{-x}+e^{-y}}$\textsf{. }

\item$\mathbf{v}=\left(\frac{e^{-y}+1}{2e^{-x-y}+e^{-x}+e^{-y}},\frac{e^{-x}+1}{2e^{-x-y}+e^{-x}+e^{-y}},1,\frac{e^{-y}+1}{2e^{-x-y}+e^{-x}+e^{-y}},\frac{e^{-x}+1}{2e^{-x-y}+e^{-x}+e^{-y}},1\right).$

\item $P=\left(\begin{array}{cccccc}
0 & 0 & 0 & 0 & \frac{e^{-x}(e^{-y}+1)}{e^{-x}+1} & \frac{e^{-x}(e^{-y}+1)}{2e^{-x-y}+e^{-x}+e^{-y}}\\
0 & 0 & 0 & \frac{(e^{-x}+1)e^{-y}}{e^{-y}+1} & 0 & \frac{(e^{-x}+1)b}{2e^{-x-y}+e^{-x}+e^{-y}}\\
0 & 0 & 0 & \frac{1-e^{-x-y}}{e^{-y}+1} & \frac{1-e^{-x-y}}{e^{-x}+1} & 0\\
0 & \frac{e^{-x}(e^{-y}+1)}{e^{-x}+1} & \frac{e^{-x}(e^{-y}+1)}{2e^{-x-y}+e^{-x}+e^{-y}} & 0 & 0 & 0\\
\frac{(e^{-x}+1)e^{-y}}{e^{-y}+1} & 0 & \frac{(e^{-x}+1)b}{2e^{-x-y}+e^{-x}+e^{-y}} & 0 & 0 & 0\\
\frac{1-e^{-x-y}}{e^{-y}+1} & \frac{1-e^{-x-y}}{e^{-x}+1} & 0 & 0 & 0 & 0
\end{array}\right)$.

\item $\mathbf{p}=\left(\begin{array}{c}
\mathbf{p}_{1}(x,y)\\
\mathbf{p}_{2}(x,y)\\
\mathbf{p}_{3}(x,y)\\
\mathbf{p}_{4}(x,y)\\
\mathbf{p}_{5}(x,y)\\
\mathbf{p}_{6}(x,y)
\end{array}\right)=\left(\begin{array}{c}
\frac{e^{x}\left(e^{y}+1\right)^{2}}{4\left(3e^{x+y}+e^{2x+y}+e^{x+2y}-1\right)}\\
\frac{\left(e^{x}+1\right)^{2}e^{y}}{4\left(3e^{x+y}+e^{2x+y}+e^{x+2y}-1\right)}\\
\frac{\left(e^{x}+e^{y}+2\right)\left(e^{x+y}-1\right)}{4\left(3e^{x+y}+e^{2x+y}+e^{x+2y}-1\right)}\\
\mathbf{p}_{1}(x,y)\\
\mathbf{p}_{2}(x,y)\\
\mathbf{p}_{3}(x,y)
\end{array}\right)$ .

\item\textsf{ Now we consider the surface }$\mathcal{M}_{\mbox{\ensuremath{\mathcal{G}}}_{2}}^{1}\ni l=(l_{1},l_{2},l_{3})=(x,y,-\log\left(\frac{e^{x+y}-1}{e^{x}+e^{y}+2}\right))=(x,y,S(x,y))=l(x,y)$. 

\item $\frac{\partial}{\partial x}l=(1,0,-\frac{e^{x}\left(e^{y}+1\right)^{2}}{\left(e^{x}+e^{y}+2\right)\left(e^{x+y}-1\right)})$,
$\frac{\partial l}{\partial y}=(0,1,-\frac{\left(e^{x}+1\right)^{2}e^{y}}{\left(e^{x}+e^{y}+2\right)\left(e^{x+y}-1\right)}),$

$\frac{\partial^{2}}{\partial x^{2}}l=(0,0,\frac{e^{x}\left(e^{y}+1\right)^{2}\left(e^{2x+y}+e^{y}+2\right)}{\left(e^{x}+e^{y}+2\right)^{2}\left(e^{x+y}-1\right)^{2}}),$$\frac{\partial^{2}}{\partial x\partial y}l=(0,0,-\frac{\left(e^{x}+1\right)\left(e^{y}+1\right)e^{x+y}\left(e^{x+y}-e^{x}-e^{y}-3\right)}{\left(e^{x}+e^{y}+2\right)^{2}\left(e^{x+y}-1\right)^{2}})$, 

\textsf{and} $\frac{\partial^{2}}{\partial y^{2}}l=(0,0,\frac{\left(e^{x}+1\right)^{2}e^{y}\left(e^{x+2y}+e^{x}+2\right)}{\left(e^{x}+e^{y}+2\right)^{2}\left(e^{x+y}-1\right)^{2}}).$

\end{itemize}

\begin{rem}
For this graph, the hidden natural condition $l_{3}>0$ is equivalent
to 
\[
e^{x+y}<3+e^{x}+e^{y}.
\]

\end{rem}

\textsf{In this example, we are interested in the curvature of $\mathcal{M}_{\mathcal{G}_{2}}^{1}$
with respect to different metrics $||\cdot||_{P}$ and $||\cdot||_{WP}$.
Since $\mathcal{M}_{\mathcal{G}_{2}}^{1}$ is a two-dimensional manifold
in $\mathbb{R}^{3}$, in order to calculate the Gaussian curvature,
we first need to derive the corresponding first fundamental forms
\[
ds^{2}=E(x,y)dx^{2}+F(x,y)dxdy+G(x,y)dy^{2}
\]
 where $x=l_{1}$ and $y=l_{2}$. }

\begin{lem}
[Brioschi formula] If a metric has local coordinates 
\[
ds^{2}=E(x,y)dx^{2}+F(x,y)dxdy+G(x,y)dy^{2},
\]
then the curvature is given by 
\[
K(x,y)=\frac{\det\left|\begin{array}{ccc}
-\frac{E_{yy}}{2}+F_{xy}-\frac{Gxx}{2} & \frac{E_{x}}{2} & F_{x}-\frac{E_{y}}{2}\\
F_{y}-\frac{G_{x}}{2} & E & F\\
\frac{G_{y}}{2} & F & G
\end{array}\right|-\det\left|\begin{array}{ccc}
0 & \frac{E_{y}}{2} & \frac{G_{x}}{2}\\
\frac{E_{v}}{2} & E & F\\
\frac{G_{x}}{2} & F & G
\end{array}\right|}{\left(EG-F^{2}\right)^{2}}.
\]

\end{lem}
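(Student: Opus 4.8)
The plan is to prove this as the classical Brioschi identity, by realizing the metric locally as the first fundamental form of an embedded surface and then converting the extrinsic curvature formula into an intrinsic one. Since each metric arising in our examples is real-analytic in $(x,y)$, the Janet--Cartan theorem supplies a local isometric embedding $\mathbf{r}=\mathbf{r}(x,y)\subset\real^{3}$ with $E=\mathbf{r}_{x}\cdot\mathbf{r}_{x}$, $F=\mathbf{r}_{x}\cdot\mathbf{r}_{y}$, $G=\mathbf{r}_{y}\cdot\mathbf{r}_{y}$ (throughout I use the standard convention $ds^{2}=E\,dx^{2}+2F\,dx\,dy+G\,dy^{2}$, under which the stated formula is the correct one). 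For such a surface the Gaussian curvature is the ratio of the determinants of the second and first fundamental forms, $K=(LN-M^{2})/(EG-F^{2})$, and each coefficient of the second fundamental form is a normalized scalar triple product, namely $L=\mathbf{r}_{xx}\cdot\mathbf{n}=[\mathbf{r}_{xx},\mathbf{r}_{x},\mathbf{r}_{y}]/\sqrt{EG-F^{2}}$ with $\mathbf{n}=(\mathbf{r}_{x}\times\mathbf{r}_{y})/\sqrt{EG-F^{2}}$, and likewise for $M$ and $N$. The whole point is that, although $L,M,N$ individually depend on the embedding, the combination $LN-M^{2}$ will turn out to depend only on $E,F,G$ and their derivatives.

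Next I would eliminate the unit normal. Writing $W=EG-F^{2}$, the expression $W\cdot(LN-M^{2})=[\mathbf{r}_{xx},\mathbf{r}_{x},\mathbf{r}_{y}]\,[\mathbf{r}_{yy},\mathbf{r}_{x},\mathbf{r}_{y}]-[\mathbf{r}_{xy},\mathbf{r}_{x},\mathbf{r}_{y}]^{2}$ is a difference of products of triple products, so I apply the Gram (Cauchy--Binet) identity $[\mathbf{a},\mathbf{c},\mathbf{d}]\,[\mathbf{b},\mathbf{c},\mathbf{d}]=\det(A^{\mathsf T}B)$, where the $3\times3$ matrix $A^{\mathsf T}B$ records the pairwise inner products among $\{\mathbf{a},\mathbf{c},\mathbf{d}\}$ and $\{\mathbf{b},\mathbf{c},\mathbf{d}\}$. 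With $(\mathbf{a},\mathbf{b},\mathbf{c},\mathbf{d})=(\mathbf{r}_{xx},\mathbf{r}_{yy},\mathbf{r}_{x},\mathbf{r}_{y})$ this rewrites $W\cdot(LN-M^{2})$ as the difference of two $3\times3$ determinants whose entries are inner products among $\mathbf{r}_{x},\mathbf{r}_{y}$ and the second derivatives $\mathbf{r}_{xx},\mathbf{r}_{xy},\mathbf{r}_{yy}$; in both determinants the lower-right $2\times2$ block is exactly $\left(\begin{smallmatrix}E & F\\ F & G\end{smallmatrix}\right)$.

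Then I would evaluate all the inner products. Differentiating $E=\mathbf{r}_{x}\cdot\mathbf{r}_{x}$, $G=\mathbf{r}_{y}\cdot\mathbf{r}_{y}$ and $F=\mathbf{r}_{x}\cdot\mathbf{r}_{y}$ once yields every mixed first-order/second-order product, e.g. $\mathbf{r}_{xx}\cdot\mathbf{r}_{x}=\tfrac{1}{2}E_{x}$, $\mathbf{r}_{xx}\cdot\mathbf{r}_{y}=F_{x}-\tfrac{1}{2}E_{y}$, $\mathbf{r}_{xy}\cdot\mathbf{r}_{x}=\tfrac{1}{2}E_{y}$, $\mathbf{r}_{xy}\cdot\mathbf{r}_{y}=\tfrac{1}{2}G_{x}$, $\mathbf{r}_{yy}\cdot\mathbf{r}_{x}=F_{y}-\tfrac{1}{2}G_{x}$, $\mathbf{r}_{yy}\cdot\mathbf{r}_{y}=\tfrac{1}{2}G_{y}$; these populate the first rows and columns of the two determinants and reproduce exactly the off-diagonal entries in the statement. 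The only genuinely second-order inner products are the two $(1,1)$ entries $\mathbf{r}_{xx}\cdot\mathbf{r}_{yy}$ and $|\mathbf{r}_{xy}|^{2}$, neither of which is intrinsic by itself; the crux is that only their \emph{difference} survives, and that difference is intrinsic:
\[
\mathbf{r}_{xx}\cdot\mathbf{r}_{yy}-|\mathbf{r}_{xy}|^{2}=\partial_{y}\big(\mathbf{r}_{xx}\cdot\mathbf{r}_{y}\big)-\partial_{x}\big(\mathbf{r}_{xy}\cdot\mathbf{r}_{y}\big)=-\tfrac{1}{2}E_{yy}+F_{xy}-\tfrac{1}{2}G_{xx},
\]
where the first equality cancels the common third-order term $\mathbf{r}_{xxy}\cdot\mathbf{r}_{y}$.

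Finally I would fold the two $(1,1)$ contributions together. Because the determinant is linear in its $(1,1)$ entry and the cofactor of that slot is $EG-F^{2}$ in \emph{both} determinants, subtracting $|\mathbf{r}_{xy}|^{2}$ from the first $(1,1)$ entry while simultaneously replacing the second $(1,1)$ entry by $0$ leaves $W\cdot(LN-M^{2})$ unchanged; this produces precisely the first determinant of the statement (with top-left entry $-\tfrac{1}{2}E_{yy}+F_{xy}-\tfrac{1}{2}G_{xx}$) minus the second (with a $0$ in its top-left slot). Dividing once more by $W=EG-F^{2}$, which comes from $K=(LN-M^{2})/W$ together with the extra factor of $W$ generated by the Gram identity, yields the denominator $(EG-F^{2})^{2}$ and completes the proof. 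I expect the main obstacle to be exactly this last bookkeeping step: tracking that only the difference of the two non-intrinsic second-order products survives and recasting the pair of symmetric Gram determinants into the asymmetric stated form; everything else is routine differentiation. A fully intrinsic variant avoids the embedding altogether by computing the curvature component $R_{1212}$ directly from the Christoffel symbols and recognizing the identical determinantal pattern, but the embedded computation above is the most transparent.
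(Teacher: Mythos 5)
The paper never proves this lemma: Brioschi's formula is quoted as a classical fact and immediately applied to the examples, so there is no argument of the paper's to compare yours against; the only question is whether your derivation stands on its own, and it does. Your route is the standard classical one, and every step checks out: writing $L=[\mathbf{r}_{xx},\mathbf{r}_{x},\mathbf{r}_{y}]/\sqrt{W}$ (and similarly $M,N$) with $W=EG-F^{2}$, applying the Gram/Cauchy--Binet identity to $W(LN-M^{2})$, reading off the six mixed inner products from first derivatives of $E,F,G$ (these do populate exactly the off-diagonal slots of the two stated determinants, with lower-right block $\left(\begin{smallmatrix}E & F\\ F & G\end{smallmatrix}\right)$ in both), using $\partial_{y}(\mathbf{r}_{xx}\cdot\mathbf{r}_{y})-\partial_{x}(\mathbf{r}_{xy}\cdot\mathbf{r}_{y})$ to cancel the third-order term $\mathbf{r}_{xxy}\cdot\mathbf{r}_{y}$ and identify $\mathbf{r}_{xx}\cdot\mathbf{r}_{yy}-|\mathbf{r}_{xy}|^{2}=-\tfrac{1}{2}E_{yy}+F_{xy}-\tfrac{1}{2}G_{xx}$, and finally exploiting linearity of the determinant in the $(1,1)$ entry, whose cofactor is $EG-F^{2}$ in both matrices, to shift the non-intrinsic parts out. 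Dividing $LN-M^{2}$ by $W$ after the Gram identity has produced one factor of $W$ correctly yields the denominator $(EG-F^{2})^{2}$. You are also right that the lemma as printed, with $F\,dx\,dy$ rather than $2F\,dx\,dy$, is a conventional slip: the displayed formula is the one for $ds^{2}=E\,dx^{2}+2F\,dx\,dy+G\,dy^{2}$, and that is the convention under which the paper actually uses it, since its $F_{P}$ and $F_{WP}$ are defined by polarization, i.e., as the inner product of $\partial/\partial x$ and $\partial/\partial y$.

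Two caveats you should make explicit. First, the appeal to Janet--Cartan restricts your proof to real-analytic metrics; a local smooth isometric embedding into $\mathbb{R}^{3}$ is not available for general smooth $E,F,G$, so as a proof of the lemma in full generality the embedded argument is incomplete, and the intrinsic computation of $R_{1212}$ from the Christoffel symbols that you mention only in a closing remark is what a general proof requires. For this paper the restriction is harmless, since all the first fundamental forms in the examples are explicit analytic expressions. Second, when you assert that the curvature of the abstract metric equals $(LN-M^{2})/(EG-F^{2})$ of the embedded copy, you are invoking the Gauss equation (Theorema Egregium) to identify intrinsic with extrinsic curvature; that is standard, but it is a genuine ingredient of the argument and should be named rather than left implicit.
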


By direction computations, we have the following propositions.

\begin{prop}
The first fundamental form of $(\mathcal{M}_{\mathcal{G}_{2}}^{1},||\cdot||_{P})$
is 
\begin{align*}
E_{P}(x,y) & =\mathrm{Var}(\frac{\partial l}{\partial x},m_{-l})=2\cdot\frac{\partial^{2}S}{\partial x^{2}}\cdot\mathbf{p}_{3}(x,y)\\
 & =\frac{e^{x}\left(e^{y}+1\right)^{2}\left(e^{2x+y}+e^{y}+2\right)}{2\left(e^{x}+e^{y}+2\right)\left(e^{x+y}-1\right)\left(3e^{x+y}+e^{2x+y}+e^{x+2y}-1\right)},\\
F_{P}(x,y) & =\frac{1}{2}\left(\mathrm{Var}(\frac{\partial l}{\partial x}+\frac{\partial l}{\partial y},m_{-l})-\mathrm{Var}(\frac{\partial l}{\partial x},m_{-l})-\mathrm{Var}(\frac{\partial l}{\partial y},m_{-l})\right)\\
 & =2\cdot\frac{\partial^{2}S}{\partial x\partial y}\cdot\mathbf{p}_{3}(x,y).\\
 & =\frac{\left(e^{x}+1\right)\left(e^{y}+1\right)e^{x+y}\left(-e^{x+y}+e^{x}+e^{y}+3\right)}{2\left(e^{x}+e^{y}+2\right)\left(e^{x+y}-1\right)\left(3e^{x+y}+e^{2x+y}+e^{x+2y}-1\right)},\\
G_{P}(x,y) & =\mathrm{Var}(\frac{\partial l}{\partial y},m_{-l})=2\cdot\frac{\partial S}{\partial y}\cdot\mathbf{p}_{3}(x,y)\\
 & =\frac{\left(e^{x}+1\right)^{2}e^{y}\left(e^{x+2y}+e^{x}+2\right)}{2\left(e^{x}+e^{y}+2\right)\left(e^{x+y}-1\right)\left(3e^{x+y}+e^{2x+y}+e^{x+2y}-1\right)}.
\end{align*}
 where $S(x,y)=-\mathrm{log}\left(\frac{e^{x+y}-1}{e^{x}+e^{y}+2}\right)$
and $l(x,y)=(x,y,S(x,y))\in\mathcal{M}_{\mbox{\ensuremath{\mathcal{G}}}_{2}}^{1}$.
\end{prop}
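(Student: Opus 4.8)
The plan is to recognize the three coefficients as the values of the single quadratic form $\mathrm{Var}(\cdot,m_{-l})$ on the coordinate frame $\{\partial_x l,\partial_y l\}$ of the parametrized surface $l(x,y)=(x,y,S(x,y))$, and then to evaluate each variance through the second-derivative formula (\ref{eq: var twice derivateve}) of Proposition \ref{Prop: var formula graph}. By definition $E_P$ and $G_P$ are $\mathrm{Var}(\partial_x l,m_{-l})$ and $\mathrm{Var}(\partial_y l,m_{-l})$, whereas $F_P$ is the associated symmetric bilinear form evaluated on $(\partial_x l,\partial_y l)$. Since $\mathrm{Var}(\cdot,m_{-l})$ is a quadratic form on the tangent space (and in particular even in its first argument, so the sign in (\ref{eq: var twice derivateve}) is immaterial), $F_P$ is recovered by the polarization identity exactly as written in the statement.

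First I would fix the path $t\mapsto l(x+t,y)$, whose velocity at $t=0$ is $\partial_x l=(1,0,S_x)$ and whose acceleration is $\partial_{xx} l=(0,0,S_{xx})$. Because this path remains inside the entropy-one level set $\mathcal{M}_{\mathcal{G}_2}^1$, Proposition \ref{Prop: var formula graph} applies and gives $\mathrm{Var}(\partial_x l,m_{-l})=\sum_i(\partial_{xx} l)(i)\,\mathbf{p}_i$. The decisive simplification is that the acceleration is supported only on the edge of length $S$, which enters through both of its orientations; in the indexing of the recipe these are the coordinates $i=3$ and $i=6$, for which the eigenvector entries coincide, $\mathbf{p}_6=\mathbf{p}_3$. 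Hence only these two equal weights survive and $E_P=S_{xx}(\mathbf{p}_3+\mathbf{p}_6)=2\,S_{xx}\mathbf{p}_3$. Running the companion path $t\mapsto l(x,y+t)$ in the same way yields $G_P=2\,\tfrac{\partial^2 S}{\partial y^2}\mathbf{p}_3$.

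For the off-diagonal coefficient I would use the diagonal path $t\mapsto l(x+t,y+t)$, whose acceleration is $\partial_{xx} l+2\partial_{xy} l+\partial_{yy} l$. Inserting this into (\ref{eq: var twice derivateve}) and then subtracting the two diagonal variances, the $\partial_{xx} l$ and $\partial_{yy} l$ contributions cancel and the polarization formula collapses to $F_P=\int\partial_{xy} l\,\mathrm{d}m_{-l}=2\,S_{xy}\mathbf{p}_3$, once more using $\mathbf{p}_6=\mathbf{p}_3$. At this stage the proof is structurally complete, and it remains only to substitute the explicit second partials $S_{xx},S_{xy},S_{yy}$ and the explicit entry $\mathbf{p}_3(x,y)$ already recorded in the recipe preceding this proposition. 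The one genuinely laborious step—and the only place where error can slip in—is this final reduction of $2S_{xx}\mathbf{p}_3$, $2S_{xy}\mathbf{p}_3$, and $2S_{yy}\mathbf{p}_3$ to the stated closed forms; it is pure symbolic manipulation of exponentials in $x$ and $y$ over the common denominator $\left(e^{x}+e^{y}+2\right)\left(e^{x+y}-1\right)\left(3e^{x+y}+e^{2x+y}+e^{x+2y}-1\right)$, most safely verified by clearing denominators and matching numerators.
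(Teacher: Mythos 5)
Your proposal is correct and follows essentially the same route as the paper: the diagonal coefficients $E_P, G_P$ come directly from formula (\ref{eq: var twice derivateve}) of Proposition \ref{Prop: var formula graph} applied to the coordinate paths (with the factor $2$ arising exactly as you say, from the two orientations of the third edge and $\mathbf{p}_6=\mathbf{p}_3$), while $F_P$ is obtained by the polarization/parallelogram identity applied to the diagonal path, which is precisely the only step the paper elaborates. Your write-up is just a more detailed version of the paper's argument, including the harmless sign remark about $\mathrm{Var}(-\dot l_0)=\mathrm{Var}(\dot l_0)$.
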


\begin{proof}
Only $F_{P}(x,y)$ needs some elaboration. By the parallelogram formula
and Proposition \ref{Prop: var formula graph} , we have 
\begin{align*}
F_{P}(x,y) & =\frac{1}{2}\left(||\frac{\partial l}{\partial x}+\frac{\partial l}{\partial y}||_{P}^{2}-||\frac{\partial l}{\partial x}||_{P}^{2}-||\frac{\partial l}{\partial y}||_{P}^{2}\right)\\
 & =(\frac{\partial}{\partial x}+\frac{\partial}{\partial y})^{2}S(x,y)\cdot\mathbf{p}_{3}(x,y)-(\frac{\partial}{\partial x})^{2}S(x,y)\cdot\mathbf{p}_{3}(x,y)-(\frac{\partial}{\partial y})^{2}S(x,y)\cdot\mathbf{p}_{3}(x,y)\\
 & =2\frac{\partial^{2}}{\partial x\partial y}S(x,y)\cdot\mathbf{p}_{3}(x,y).
\end{align*}

\end{proof}

\textsf{We notice that the same computation holds for Weil-Petersson
metric ($||\cdot||_{WP}$). Therefore, we have the following result. }

\begin{prop}
The first fundamental form of $(\mathcal{M}_{\mathcal{G}_{2}}^{1},||\cdot||_{WP})$
is 
\begin{align*}
E_{WP}(x,y) & =\frac{\mathrm{Var}(\frac{\partial l}{\partial x},m_{-l})}{V(l)}\\
 & =\frac{e^{x}\left(e^{y}+1\right)^{2}\left(e^{2x+y}+e^{y}+2\right)}{\left(e^{x}+e^{y}+2\right)\left(e^{x+y}-1\right)f(x,y)},\\
F_{WP}(x,y) & =\frac{\frac{1}{2}\left(\mathrm{Var}(\frac{\partial l}{\partial x}+\frac{\partial l}{\partial y},m_{-l})-\mathrm{Var}(\frac{\partial l}{\partial x},m_{-l})-\mathrm{Var}(\frac{\partial l}{\partial y},m_{-l})\right)}{V(l)}\\
 & =-\frac{\left(e^{x}+1\right)\left(e^{y}+1\right)e^{x+y}\left(e^{x+y}-e^{x}-e^{y}-3\right)}{\left(e^{x}+e^{y}+2\right)\left(e^{x+y}-1\right)f(x,y)},\\
G_{WP}(x,y) & =\frac{\mathrm{Var}(\frac{\partial l}{\partial y},m_{-l})}{V(l)}\\
 & =\frac{\left(e^{x}+1\right)^{2}e^{y}\left(e^{x+2y}+e^{x}+2\right)}{\left(e^{x}+e^{y}+2\right)\left(e^{x+y}-1\right)f(x,y)},
\end{align*}
where $V(l)=\int l\mathrm{d}m_{-l}=2(l_{1}\cdot\mathbf{p}_{1}+l_{2}\cdot\mathbf{p}_{2}+l_{3}\cdot\mathbf{p_{3}})$
and 
\[
f(x,y)=\left(xe^{x+2y}+ye^{2x+y}+2e^{x+y}(x+y)+\left(-2e^{x+y}-e^{2x+y}-e^{x+2y}+e^{x}+e^{y}+2\right)\log\left(\frac{e^{x+y}-1}{e^{x}+e^{y}+2}\right)+e^{x}x+e^{y}y\right)
\]

\end{prop}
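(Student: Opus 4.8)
The plan is to observe that the Weil--Petersson metric is, by definition, the pressure metric divided by the scalar factor $V(l)=\int l\,\mathrm{d}m_{-l}$, so that the statement follows from the preceding proposition once $V(l)$ is made explicit. Concretely, for any $\phi\in T_l\mathcal{M}_{\mathcal{G}_2}^1$ one has $\|\phi\|_{WP}^2=\|\phi\|_P^2/V(l)$, and since the polarization (parallelogram) identity that produced $F_P$ from squared norms is linear in those squared norms, dividing each of $E_P$, $F_P$, $G_P$ by the single function $V(l)$ yields $E_{WP}$, $F_{WP}$, $G_{WP}$. Hence the entire task reduces to evaluating $V(l)$ and checking that it packages into the function $f(x,y)$.

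First I would compute $V(l)=\int l\,\mathrm{d}m_{-l}=\sum_{e\in\mathcal{E}^0}l(e)\,\mathbf{p}_e$, using that by Proposition \ref{prop: locally const fucn} the one-coordinate marginal of $m_{-l}$ on the cylinder $[e]$ equals $\mathbf{p}_e$. The decisive simplification comes from the edge-doubling symmetry of the directed model: each undirected edge $e_i$ contributes the two directed edges on which $l$ takes the \emph{same} value, and the explicit vector $\mathbf{p}$ computed above satisfies $\mathbf{p}_4=\mathbf{p}_1$, $\mathbf{p}_5=\mathbf{p}_2$, $\mathbf{p}_6=\mathbf{p}_3$. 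Therefore $V(l)=2\bigl(l_1\mathbf{p}_1+l_2\mathbf{p}_2+l_3\mathbf{p}_3\bigr)$ with $l_1=x$, $l_2=y$ and $l_3=S(x,y)$; this is the exact analogue of the factor $2$ already appearing in $E_P,F_P,G_P$ via Proposition \ref{Prop: var formula graph}, and the two factors will cancel in the quotient.

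Next I would insert the explicit entries $\mathbf{p}_1,\mathbf{p}_2,\mathbf{p}_3$, which all carry the common denominator $4\bigl(3e^{x+y}+e^{2x+y}+e^{x+2y}-1\bigr)$, together with $S(x,y)=-\log\bigl(\tfrac{e^{x+y}-1}{e^x+e^y+2}\bigr)$. Clearing this denominator shows that $2\bigl(3e^{x+y}+e^{2x+y}+e^{x+2y}-1\bigr)V(l)=f(x,y)$: the terms $x\,e^x(e^y+1)^2$ and $y\,(e^x+1)^2e^y$ expand to the polynomial part of $f$, while $l_3\,(e^x+e^y+2)(e^{x+y}-1)$ supplies the logarithmic part once one uses the identity $(e^x+e^y+2)(e^{x+y}-1)=e^{2x+y}+e^{x+2y}+2e^{x+y}-e^x-e^y-2$, which fixes the coefficient $-(e^x+e^y+2)(e^{x+y}-1)$ of the logarithm in $f$.

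Finally, dividing each entry of the preceding proposition by $V(l)$ and cancelling the common factor $2\bigl(3e^{x+y}+e^{2x+y}+e^{x+2y}-1\bigr)$ against $f(x,y)$ leaves exactly the denominator $(e^x+e^y+2)(e^{x+y}-1)f(x,y)$ in each of $E_{WP}$, $F_{WP}$, $G_{WP}$, as claimed. The main obstacle is purely computational bookkeeping: faithfully carrying the logarithm of $l_3$ through $V(l)$ and matching signs so that the coefficient of the logarithm in $f$ emerges correctly; beyond the conformal rescaling by $V(l)$ and the edge-doubling symmetry there is no conceptual difficulty.
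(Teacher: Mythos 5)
Your proposal is correct and takes essentially the same route as the paper: the Weil--Petersson entries are just the pressure-metric entries of the preceding proposition divided by $V(l)=2\left(l_{1}\mathbf{p}_{1}+l_{2}\mathbf{p}_{2}+l_{3}\mathbf{p}_{3}\right)$, and your verification that $2\left(3e^{x+y}+e^{2x+y}+e^{x+2y}-1\right)V(l)=f(x,y)$ (so the common factors cancel to leave the denominator $\left(e^{x}+e^{y}+2\right)\left(e^{x+y}-1\right)f(x,y)$) is precisely the ``direct computation'' the paper leaves implicit when it says the same computation as in the $||\cdot||_{P}$ case holds after normalization.
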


\begin{prop}
\label{prop: Belt buckle Pressure}The moduli space of belt buckles
is bounded positively curved under the pressure metric $||\cdot||_{P}$.
i.e., $(\mathcal{M}_{\mathcal{G}_{2}}^{1},||\cdot||_{P})$ is positively
curved and the Gaussian curvature is bounded. Moreover, $(\mathcal{M}_{\mathcal{G}_{2}}^{1},||\cdot||_{P})$
is incomplete.
\end{prop}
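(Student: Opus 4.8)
The plan is to exploit the remarkably simple shape of the first fundamental form recorded in the previous proposition: writing $H=\mathrm{Hess}(S)=\left(\begin{smallmatrix} S_{xx} & S_{xy}\\ S_{xy} & S_{yy}\end{smallmatrix}\right)$, the three coefficients are exactly $E_P=2\mathbf{p}_3 S_{xx}$, $F_P=2\mathbf{p}_3 S_{xy}$, $G_P=2\mathbf{p}_3 S_{yy}$, so that the pressure metric is $g_P=2\mathbf{p}_3(x,y)\,H(x,y)$, a positive multiple of the Hessian of $S$. In particular $E_P G_P-F_P^2=4\mathbf{p}_3^2\det H$, and since $\mathbf{p}_3>0$ on the admissible region and the metric is genuinely Riemannian we have $\det H>0$ there; consequently the denominator $(E_P G_P-F_P^2)^2=16\mathbf{p}_3^4(\det H)^2$ appearing in the Brioschi formula is strictly positive, and the sign of the Gaussian curvature $K$ equals the sign of the difference of the two $3\times 3$ determinants in its numerator.

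I would substitute $u=e^x$, $v=e^y$ throughout, which turns $E_P,F_P,G_P$ and all their partial derivatives into rational functions of $(u,v)$; the admissible region becomes $R=\{u>1,\ v>1,\ (u-1)(v-1)<4\}$, the last inequality being the constraint $l_3=S>0$. Feeding these into the Brioschi formula and clearing denominators with a computer algebra system, I expect $K$ to reduce to a single rational function $N(u,v)/D(u,v)$ with $D>0$ manifestly. The first task is then to prove $N(u,v)>0$ on $R$. Here I would use the symmetry $K(x,y)=K(y,x)$ inherited from $S(x,y)=S(y,x)$ to halve the work, and try to display $N$ as a polynomial with nonnegative coefficients in $u,v$ (or, failing that, after the shift $u=1+s$, $v=1+t$ with $s,t>0$, $st<4$, as a manifestly positive combination), which would settle the claim that $(\mathcal{M}_{\mathcal{G}_2}^1,||\cdot||_P)$ is positively curved.

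The main obstacle is the boundedness $0<C_2\le K\le C_1$, precisely because $R$ is \emph{noncompact}: one can send $u\to\infty$ while $v\to 1$ along the arc $(u-1)(v-1)\to 4$. I would therefore analyze the limiting behaviour of $K$ on each boundary stratum of $R$ --- the faces $u=1$ and $v=1$, the arc $(u-1)(v-1)=4$, and the escape to infinity along it --- by computing the relevant one-sided limits of $N/D$. The claim is that each such limit is a finite strictly positive number; granting this, continuity of $K$ together with strict positivity on the interior and finiteness of all boundary limits forces $K$ to attain a positive minimum and a finite maximum, yielding constants $0<C_2\le K\le C_1$. Controlling the limit at infinity, by matching the leading monomials of $N$ and $D$ along $(u-1)(v-1)=4$, is the delicate point and the step I expect to require the most care.

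Finally, incompleteness is the easiest part and again follows from the Hessian description. Fix $y_0>0$ and consider the horizontal path $c(x)=(x,y_0,S(x,y_0))$, so that $||\dot c(x)||_P^2=E_P(x,y_0)=2\mathbf{p}_3(x,y_0)S_{xx}(x,y_0)$. As $x\to 0^+$ both $\mathbf{p}_3(x,y_0)$ and $S_{xx}(x,y_0)$ tend to finite limits, since the explicit formulas have no singularity at $x=0$ when $y_0>0$; hence $E_P$ is bounded on $[0,x_0]$ and $\int_0^{x_0}\sqrt{E_P(x,y_0)}\,dx<\infty$. Thus $c$ reaches in finite $||\cdot||_P$-distance the point $(0,y_0,S(0,y_0))$, which has a vanishing edge length and so lies outside $\mathcal{M}_{\mathcal{G}_2}^1$; therefore $(\mathcal{M}_{\mathcal{G}_2}^1,||\cdot||_P)$ is incomplete.
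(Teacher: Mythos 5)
Your proposal is correct, and on the curvature claims it follows essentially the same route as the paper: the paper applies the Brioschi formula to $E_P=2\mathbf{p}_3S_{xx}$, $F_P=2\mathbf{p}_3S_{xy}$, $G_P=2\mathbf{p}_3S_{yy}$ and lands on the explicit expression
\[
K_P(x,y)=\frac{N(e^x,e^y)}{4(e^x+1)^2(e^y+1)^2\left(3e^{x+y}+e^{2x+y}+e^{x+2y}-1\right)},
\]
where $N$ is a polynomial in $e^x,e^y$ with only positive coefficients --- exactly the positive-coefficient display in $u=e^x$, $v=e^y$ that you predicted --- so positivity of $K_P$ is immediate. For boundedness the paper does what you outline for the escape to infinity: it passes to polar coordinates $e^x=r\cos\theta$, $e^y=r\sin\theta$, matches the top-degree terms $e^{3x+4y}$, $e^{4x+3y}$ of numerator and denominator to get $K_P\to 3/4$ as $r\to\infty$, and combines this with compactness of $\{x,y\geq 0,\ e^{2x}+e^{2y}\leq r^2\}$. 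The ``delicate point'' you flag is real: the paper's limit is not obviously uniform in $\theta$ near $\theta=0,\pi/2$ (i.e.\ near the faces $x=0$, $y=0$), so your stratum-by-stratum boundary analysis is, if anything, the more careful plan --- though, like the paper, you defer the decisive computations to a computer algebra system, and the paper's explicit formula is what supplies them. Where you genuinely diverge is incompleteness: the paper travels along the diagonal $c(x)=l(x,x)$, where $\|\dot c(x)\|_P^2=e^x/(2e^{2x}-3e^x+1)\sim 1/x$ blows up yet has integrable square root, whereas you fix $y_0>0$ and run horizontally into the face $x=0$, noting that $E_P(x,y_0)=2\mathbf{p}_3S_{xx}$ extends continuously (hence boundedly) to $x=0$ because no factor $e^{x+y_0}-1$ degenerates there. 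Your path is simpler, avoids any integrability estimate, and the endpoint indeed has a vanishing edge length, so both arguments are valid. One small repair: you infer $\det H>0$ from ``the metric is genuinely Riemannian,'' which is circular, since positive-definiteness is exactly what the formulas must certify; it is harmless, however, because a direct computation gives
\[
S_{xx}S_{yy}-S_{xy}^2=\frac{2e^{x+y}(e^x+1)^2(e^y+1)^2}{(e^x+e^y+2)^3(e^{x+y}-1)^2}>0
\]
on the admissible region, so the Hessian description you start from is sound.
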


\begin{proof}
By the Brioschi formula, we can write down the curvature explicitly
as the following 
\begin{align*}
K_{P}(x,y) & =\frac{1}{4\left(e^{x}+1\right)^{2}\left(e^{y}+1\right)^{2}\left(3e^{x+y}+e^{2x+y}+e^{x+2y}-1\right)}\cdot\left(5+6e^{x}+3e^{2x}+6e^{y}+3e^{2y}+3e^{x+y}+\right.\\
 & \;\;\;\;45e^{2(x+y)}+19e^{3(x+y)}+11e^{2x+y}+9e^{3x+y}+3e^{4x+y}+11e^{x+2y}+\\
 & \;\;\;\;\left.33e^{3x+2y}+8e^{4x+2y}+9e^{x+3y}+33e^{2x+3y}+3e^{4x+3y}+3e^{x+4y}+8e^{2x+4y}+3e^{3x+4y}\right).
\end{align*}
We observe that the numerator and the denominator of $K(x,y)$ have
the same highest exponents $e^{3x+4y}$ and $e^{4x+3y}$. Consider
the polar coordinate $e^{x}=r\cos\theta$ and $e^{y}=r\sin\theta$
where $1<r<\infty$ and $\theta\in(0,\frac{\pi}{2})$ 
\[
K_{P}(x,y)=\frac{6r^{7}(\cos^{3}\theta\sin^{4}\theta+\cos^{4}\theta\sin^{3}\theta)+HOT(r^{6})}{8r^{7}(\cos^{3}\theta\sin^{4}\theta+\cos^{4}\theta\sin^{3}\theta)+HOT(r^{6})}.
\]
 Therefore, we know $K_{P}(x,y)$ is close to $\frac{3}{4}$ when
$r$ is large, and it is easy see that $K_{p}(x,y)>0$ for $x,y\geq0$.
Because $B:=\{(x,y);x\geq0,y\geq0,e^{2x}+e^{2y}\leq r^{2}\}$ is a
compact set, we know $c_{1}\geq K_{p}(x,y)\geq c_{0}>0$.

To prove the second assertion, we consider the path $c(x)=l(x,x)=(x,x,-\log\left(\frac{1-e^{-2x}}{2e^{-2x}+2e^{-x}}\right)).$
We repeat the argument that we used in Proposition \ref{prop: fig 8 press incomplete}.
Because when $x$ goes to $0$, $c(x)$ goes to the boundary of $\mathcal{M}_{\mathcal{G}_{2}}^{1}$,
to prove the incompleteness we only need to show that $c(x)$ goes
to boundary in finite time. Since 
\begin{align*}
||\dot{c}(x)||_{P}^{2}= & P_{3}(x,x)\cdot\frac{d^{2}}{dx^{2}}\left(-\log\left(\frac{1-e^{-2x}}{2e^{-2x}+2e^{-x}}\right)\right)+P_{6}(x,x)\cdot\frac{d^{2}}{dx^{2}}\left(-\log\left(\frac{1-e^{-2x}}{2e^{-2x}+2e^{-x}}\right)\right)\\
= & \frac{e^{x}}{-3e^{x}+2e^{2x}+1},
\end{align*}
it is clear that $\int_{0}^{1}||\dot{c}(x)||_{P}\mathrm{d}x$ is convergent.
Hence the geodesic distance from $c(1)$ to the boundary point $c(0)$
is finite. 
\end{proof}

\begin{obs}

\label{obs: Belt buckle WP}The sectional curvature of $(\mathcal{M}_{\mathcal{G}_{2}}^{1},||\cdot||_{WP})$
is negative and bounded.

\end{obs}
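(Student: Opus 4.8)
The plan is to reduce to the Gaussian curvature, since $\mathcal{M}_{\mathcal{G}_2}^1$ is two dimensional, and then to compute it directly from the first fundamental form $E_{WP},F_{WP},G_{WP}$ via the Brioschi formula, exactly as in the proof of Proposition \ref{prop: Belt buckle Pressure}. A cleaner alternative, which I would run in parallel as a consistency check, exploits the fact that $||\cdot||_{WP}$ is conformal to $||\cdot||_{P}$ with conformal factor $1/V(l)$, where $V(l)=\int l\,\mathrm{d}m_{-l}$. Writing the pressure metric's Gaussian curvature as $K_{P}$ (known to be positive and bounded by Proposition \ref{prop: Belt buckle Pressure}) and setting $u=-\tfrac12\log V(l)$, the two-dimensional conformal change formula gives
\[
K_{WP}=V(l)\bigl(K_{P}-\Delta_{P}u\bigr),
\]
where $\Delta_{P}$ is the Laplace--Beltrami operator of the pressure metric. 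In this form the sought-for negativity is precisely the statement that the correction $\Delta_{P}u$ overtakes the positive quantity $K_{P}$ everywhere on the admissible region.

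For the negativity I would work on the admissible region $\Omega=\{(x,y): x>0,\ y>0,\ e^{x+y}<3+e^{x}+e^{y}\}$ cut out by the hidden constraint $l_{3}>0$. Whichever route is used, the denominator of $K_{WP}$ is the manifestly positive factor $(E_{WP}G_{WP}-F_{WP}^{2})^{2}$, so the sign of $K_{WP}$ is carried by the Brioschi numerator alone. The genuinely new difficulty compared with the $K_{P}$ computation is that $f(x,y)$ contains the logarithm $\log\frac{e^{x+y}-1}{e^{x}+e^{y}+2}$; hence $E_{WP},F_{WP},G_{WP}$ and all their derivatives are mixtures of rational functions of $e^{x},e^{y}$ and this logarithm, so the numerator is not rational as it was for $K_{P}$. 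I would substitute $u=e^{x}$, $v=e^{y}$ and abbreviate $L=\log\frac{uv-1}{u+v+2}$, collect the numerator as a polynomial in $L$ whose coefficients are polynomials in $u,v$, and then argue that each such coefficient has a definite sign on $\Omega$ (using $u,v>1$ together with $uv<3+u+v$), so that the whole expression is negative there.

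For the boundedness I would analyze $K_{WP}$ on the closure of $\Omega$, boundary component by boundary component. By the $x\leftrightarrow y$ symmetry it suffices to treat the axis boundary $x\to0^{+}$ and the short-edge boundary $l_{3}\to0^{+}$, i.e. $e^{x+y}\to 3+e^{x}+e^{y}$; for the remaining large-scale behavior I would reuse the polar substitution $e^{x}=r\cos\theta,\ e^{y}=r\sin\theta$ from Proposition \ref{prop: Belt buckle Pressure} to read off the leading order as $r$ grows within $\Omega$. The main obstacle is exactly this logarithmic term: near the $l_{3}\to0$ boundary both the Brioschi numerator and the factor $(E_{WP}G_{WP}-F_{WP}^{2})^{2}$ degenerate while $L\to-\infty$, so the bound requires showing that these degeneracies cancel and that $K_{WP}$ extends to a continuous function taking values in a compact subinterval of $(-\infty,0)$. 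Establishing this cancellation rigorously, rather than reading it off a plot, is the technical heart of the statement, and is precisely why it is recorded here as a computer-assisted Observation rather than as a Proposition.
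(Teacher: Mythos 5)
Your starting point coincides with the paper's: since $\mathcal{M}_{\mathcal{G}_2}^1$ is a surface in $\mathbb{R}^3$, the paper computes the first fundamental form $E_{WP},F_{WP},G_{WP}$ on the region $\{x,y>0,\ e^{x+y}<3+e^{x}+e^{y}\}$, applies the Brioschi formula to get an explicit (appendix-length) expression for $K_{WP}(x,y)$, and then \emph{stops}: the stated bounds are obtained purely numerically with Mathematica, which reports the maximum $\approx -0.485$ near $(x_0,y_0)\approx(\log 3,\log 3)$ and a lower bound $\approx -0.565$. That is the entire content of the paper's ``Evidence,'' and it is exactly why the statement is an Observation rather than a Proposition --- a distinction you correctly identify. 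Your conformal-change reformulation is a genuine addition not in the paper: since $\|\cdot\|_{WP}=\|\cdot\|_{P}/V(l)$, the identity $K_{WP}=V(l)\bigl(K_{P}-\Delta_{P}u\bigr)$ with $u=-\tfrac12\log V(l)$ is correct and recasts negativity as the inequality $\Delta_{P}u>K_{P}$, which is an attractive way to leverage the already-proved positivity and boundedness of $K_{P}$ from Proposition \ref{prop: Belt buckle Pressure}.

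The gap is in the part of your plan that would upgrade the Observation to a theorem, and you should be aware it is not a routine finish. Your proposed sign argument treats the Brioschi numerator as a polynomial in $L=\log\frac{uv-1}{u+v+2}$ and asks that each coefficient have a definite sign on $\Omega$. But $L$ is not a free variable --- it is a specific function of $(u,v)$ ranging over $(-\infty,0)$ on $\Omega$ --- so coefficient-wise sign-definiteness is only a sufficient condition, and inspection of the appendix formula makes it doubtful that it holds: the coefficients of $L^2$, $L$, and the rational part there are themselves large mixed-sign polynomials (e.g.\ terms like $x(x-y-4)$ and $e^{2x+3y}(8x^2+x(17y-32)-24y)$ change sign on $\Omega$), so the negativity almost certainly arises from cancellation between the $L$-terms and the rational terms rather than term-by-term. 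The same issue recurs in your boundedness step: near the boundary $e^{x+y}\to 3+e^{x}+e^{y}$ one has $L\to 0$ while $V(l)$ and the metric coefficients stay finite but the numerator and $(E_{WP}G_{WP}-F_{WP}^2)^2$ both degenerate, and you name the required cancellation without exhibiting it. So what you have actually executed is the same evidence-level computation as the paper (plus a correct and useful conformal identity); the rigorous negativity and boundedness remain open in your write-up just as they do in the paper.
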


\begin{proof}[Evidence]

By the Brioschi formula, we can explicitly write down the curvature.
However, it is too long and unnecessary to state it in the proof.
We put the explicit expression of $K_{WP}(x,y)$ in the appendix for
whom is interested in that. 

The following is the graph of $K_{WP}(x,y)$ provided $0<x,y<5$ and
the hidden condition $e^{x+y}<3+e^{x}+e^{y}$. With the explicit formula
of $K_{WP}$, we can easily plot the picture and estimate the maximum
and the minimal of $K_{WP}$ through softwares, and here we use Mathematica.
By Mathematica, we know the maximum of $K_{WP}(x,y)$ for $0<x,y$
and $e^{x+y}<3+e^{x}+e^{y}$ happens around $(x_{0},y_{0})\approx(1.09861,1.09861)$
where $K_{WP}(x_{0},y_{0})\approx-0.485025$, and the lower bound
of $K_{WP}$ is around $-0.564958$.

\begin{center}\includegraphics[scale=0.8]{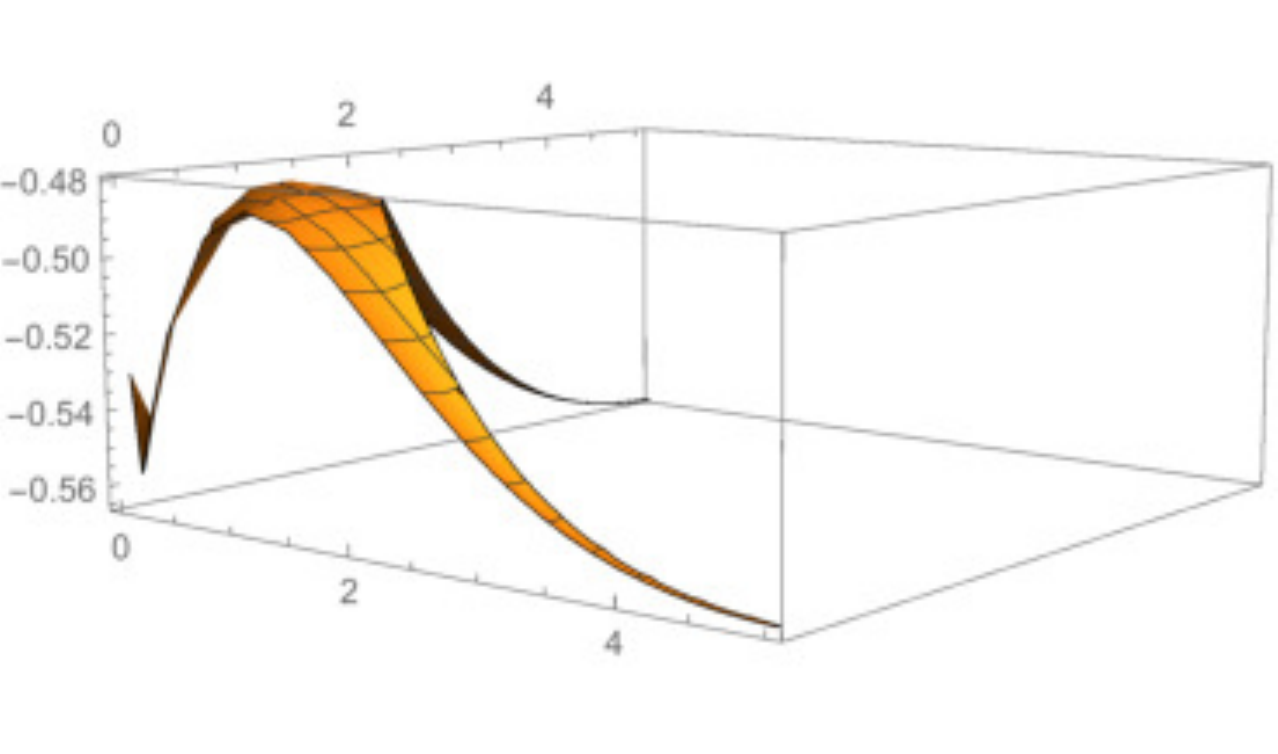}\end{center}

\end{proof}

\subsection{Dumbbells}

\textsf{In this subsection, we discuss the graph of a dumbbell. We
denote this graph by $\mathcal{G}_{3}$ and the picture below is what
it looks like. Likewise, for short, we denote $l(e_{1})$, $l(e_{2})$
and $l(e_{3})$ by $x$, $y$ and $z$, respectively. }

\begin{center}\includegraphics{dumbbell} \end{center}

\textsf{Following the same recipe, we have }

\begin{itemize}

\item$A=\left(\begin{array}{cccccc}
1 & 0 & 1 & 0 & 0 & 0\\
0 & 1 & 0 & 0 & 0 & 1\\
0 & 1 & 0 & 0 & 1 & 0\\
0 & 0 & 1 & 1 & 0 & 0\\
0 & 0 & 0 & 0 & 1 & 1\\
1 & 0 & 0 & 1 & 0 & 0
\end{array}\right)$ $A_{-l}=\left(\begin{array}{cccccc}
e^{-x} & 0 & e^{-x} & 0 & 0 & 0\\
0 & e^{-y} & 0 & 0 & 0 & e^{-y}\\
0 & e^{-z} & 0 & 0 & e^{-z} & 0\\
0 & 0 & e^{-x} & e^{-x} & 0 & 0\\
0 & 0 & 0 & 0 & e^{-y} & e^{-y}\\
e^{-z} & 0 & 0 & e^{-z} & 0 & 0
\end{array}\right).$

\item$h(l)=1$ $\implies$$\det(A_{-l}-\mathrm{Id})=0\implies$$e^{-z}=\sqrt{\frac{e^{-x-y}-e^{-x}-e^{-y}+1}{4e^{-x-y}}}$\textsf{
.}

\item $\mathbf{v}=\left(\frac{\sqrt{\frac{(e^{-x}-1)(e^{-y}-1)}{e^{-x-y}}}}{2-2e^{-x}},\frac{1}{2e^{-y}},-\frac{e^{-x}\sqrt{\frac{(e^{-x}-1)(e^{-y}-1)}{e^{-x-y}}}}{e^{-x}-1},\frac{\sqrt{\frac{(e^{-x}-1)(e^{-y}-1)}{e^{-x-y}}}}{2-2e^{-x}},\frac{1}{2e^{-y}},1\right).$

\item $P=\left(\begin{array}{cccccc}
e^{-x} & 0 & \frac{1}{2} & 0 & 0 & 0\\
0 & e^{-y} & 0 & 0 & 0 & \frac{1}{2}\\
0 & 1-e^{-y} & 0 & 0 & 1-e^{-y} & 0\\
0 & 0 & \frac{1}{2} & a & 0 & 0\\
0 & 0 & 0 & 0 & e^{-y} & \frac{1}{2}\\
1-e^{-x} & 0 & 0 & 1-e^{-x} & 0 & 0
\end{array}\right)$.

\item $\mathbf{p}=\left(\begin{array}{c}
\mathbf{p}_{1}(x,y)\\
\mathbf{p}_{2}(x,y)\\
\mathbf{p}_{3}(x,y)\\
\mathbf{p}_{4}(x,y)\\
\mathbf{p}_{5}(x,y)\\
\mathbf{p}_{6}(x,y)
\end{array}\right)=\left(\begin{array}{c}
\frac{e^{x}\left(-1+e^{y}\right)}{4-6e^{x}-6e^{y}+8e^{x+y}}\\
\frac{e^{y}\left(-1+e^{x}\right)}{4-6e^{x}-6e^{y}+8e^{x+y}}\\
\frac{\left(-1+e^{x}\right)\left(-1+e^{y}\right)}{2-3e^{x}-3e^{y}+4e^{x+y}}\\
\mathbf{p}_{1}(x,y)\\
\mathbf{p}_{2}(x,y)\\
\mathbf{p}_{3}(x,y)
\end{array}\right)$ .

\item\textsf{ Now we consider the surface }$\mathcal{M}_{\mbox{\ensuremath{\mathcal{G}}}_{3}}^{1}\ni l=(l_{1},l_{2},l_{3})=(x,y,\log(2)-\frac{1}{2}\log\left(\left(e^{x}-1\right)\left(e^{y}-1\right)\right))=l(x,y)$. 

\item $\frac{\partial}{\partial x}l=(1,0,\frac{e^{x}}{2-2e^{x}})$,
$\frac{\partial l}{\partial y}=(0,1,\frac{e^{y}}{2-2e^{y}}),$ $\frac{\partial^{2}}{\partial x^{2}}l=(0,0,\frac{e^{x}}{2\left(e^{x}-1\right)^{2}}),$$\frac{\partial^{2}}{\partial x\partial y}l=(0,0,0)$,
\textsf{and} $\frac{\partial^{2}}{\partial y^{2}}l=(0,0,\frac{e^{y}}{2\left(e^{y}-1\right)^{2}}).$

\end{itemize}

\begin{rem}
There is a hidden condition $l_{3}>0$. In this case, the condition
is equivalent to 
\[
4>(e^{x}-1)(e^{y}-1).
\]
 
\end{rem}

\textsf{Set  $x=l_{1}$ and $y=l_{2}$. Since $\mathcal{M}_{\mathcal{G}_{3}}^{1}$
is also a two-dimensional manifold in $\real^{3}$, so we repeat the
same argument as for $\mathcal{M}_{\mathcal{G}_{2}}^{1}$. Following
propositions are coming from direct computations.}

\begin{prop}
The first fundamental form of $(\mathcal{M}_{\mathcal{G}_{3}}^{1},||\cdot||_{P})$
is 
\begin{align*}
E_{P}(x,y) & =\mathrm{Var}(\frac{\partial l}{\partial x},m_{-l})=2\cdot\frac{\partial l}{\partial x}\cdot\mathbf{p}_{3}(x,y)\\
 & =\frac{e^{x}\left(e^{y}-1\right)}{\left(e^{x}-1\right)\left(4e^{x+y}-3e^{x}-3e^{y}+2\right)},\\
F_{P}(x,y) & =\frac{1}{2}\left(\mathrm{Var}(\frac{\partial l}{\partial x}+\frac{\partial l}{\partial y},m_{-l})-\mathrm{Var}(\frac{\partial l}{\partial x},m_{-l})-\mathrm{Var}(\frac{\partial l}{\partial y},m_{-l})\right)\\
 & =0,\\
G_{P}(x,y) & =\mathrm{Var}(\frac{\partial l}{\partial y},m_{-l})=2\cdot\frac{\partial l}{\partial y}\cdot\mathbf{p}_{3}(x,y)\\
 & =\frac{\left(e^{x}-1\right)e^{y}}{\left(e^{y}-1\right)\left(4e^{x+y}-3e^{x}-3e^{y}+2\right)}.
\end{align*}
 
\end{prop}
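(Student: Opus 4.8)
The plan is to follow verbatim the strategy used for the belt buckle $\mathcal{G}_2$, reducing each coefficient of the first fundamental form to the variance formula of Proposition \ref{Prop: var formula graph}. I parametrize $\mathcal{M}_{\mathcal{G}_3}^1$ by $l(x,y) = (x, y, S(x,y))$ with $S(x,y) = \log 2 - \tfrac{1}{2}\log((e^x-1)(e^y-1))$; this surface lies in $\mathcal{M}_{\mathcal{G}_3}^1$ by construction, so $\partial l/\partial x$ and $\partial l/\partial y$ are genuine tangent vectors and the first-variation term $\int \partial_x l\, dm_{-l}$ vanishes by equation \eqref{eq:1} of Theorem \ref{thm:(Derivatives-of-the} (differentiating the identically-zero pressure $P(-l_t)=0$ once). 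Consequently equation \eqref{eq: var twice derivateve} applies and gives $\mathrm{Var}(\partial_x l, m_{-l}) = \int \partial_x^2 l\, dm_{-l} = \sum_i (\partial_x^2 l)(i)\,\mathbf{p}_i$, and likewise for $\partial_y l$.

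For $E_P$ I would observe that the edge weighting, viewed as a locally constant function of $e_0$, takes value $x$ on the directed edges $e_1, e_4$, value $y$ on $e_2, e_5$, and value $z = S(x,y)$ on $e_3, e_6$; only the last pair depends on $x$. Hence $\partial_x^2 l$ is supported on $\{e_3, e_6\}$, where it equals $\partial_x^2 S = e^x/(2(e^x-1)^2)$. Since the listed invariant vector satisfies $\mathbf{p}_6 = \mathbf{p}_3$, the sum collapses to $E_P = 2\,\partial_x^2 S\cdot \mathbf{p}_3$; substituting $\mathbf{p}_3 = (e^x-1)(e^y-1)/(4e^{x+y}-3e^x-3e^y+2)$ and simplifying yields the stated expression. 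The coefficient $G_P$ is obtained identically with the roles of $x$ and $y$ interchanged, using $\partial_y^2 S = e^y/(2(e^y-1)^2)$.

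For $F_P$ I would use the polarization (parallelogram) identity exactly as in the belt buckle proposition, writing $F_P = \tfrac{1}{2}(\mathrm{Var}(\partial_x l + \partial_y l, m_{-l}) - \mathrm{Var}(\partial_x l, m_{-l}) - \mathrm{Var}(\partial_y l, m_{-l}))$; since each variance equals the corresponding second-derivative integral, the cross terms assemble into $2\,\partial_x\partial_y S\cdot \mathbf{p}_3$. The decisive observation is that for the dumbbell the mixed partial vanishes, $\partial_x\partial_y S = 0$, because the constraint $S$ separates as a function of $x$ plus a function of $y$; this forces $F_P \equiv 0$ and is the only structural input beyond the belt buckle case. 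The genuine obstacle here is not conceptual but bookkeeping: one must correctly track on which of the six directed edges each derivative of $l$ is supported and invoke the symmetry $\mathbf{p}_3 = \mathbf{p}_6$, after which the remaining work is a routine simplification of rational expressions in $e^x$ and $e^y$.
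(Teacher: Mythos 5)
Your proposal is correct and takes essentially the same route as the paper: the paper obtains these coefficients by ``direct computation'' via Proposition \ref{Prop: var formula graph} together with the polarization identity, repeating verbatim the argument given for the belt buckle $\mathcal{M}_{\mathcal{G}_2}^1$, which is exactly what you do (including the support bookkeeping on the directed edges $e_3,e_6$ and the symmetry $\mathbf{p}_6=\mathbf{p}_3$). Note also that you correctly interpret the factor written as $2\cdot\frac{\partial l}{\partial x}\cdot\mathbf{p}_3$ in the statement as the second derivative $2\,\frac{\partial^2 S}{\partial x^2}\cdot\mathbf{p}_3$ (a typo in the paper), which is what the displayed closed-form expressions actually require.
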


\begin{prop}
The first fundamental form of $(\mathcal{M}_{\mathcal{G}_{3}}^{1},||\cdot||_{WP})$
is 
\begin{align*}
E_{WP}(x,y) & =\frac{\mathrm{Var}(\frac{\partial l}{\partial x},m_{-l})}{V(l)}\\
 & =\frac{e^{x}\left(e^{y}-1\right)}{\left(e^{x}-1\right)f(x,y)},\\
F_{WP}(x,y) & =\frac{\frac{1}{2}\left(\mathrm{Var}(\frac{\partial l}{\partial x}+\frac{\partial l}{\partial y},m_{-l})-\mathrm{Var}(\frac{\partial l}{\partial x},m_{-l})-\mathrm{Var}(\frac{\partial l}{\partial y},m_{-l})\right)}{V(l)}\\
 & =0,\\
G_{WP}(x,y) & =\frac{\mathrm{Var}(\frac{\partial l}{\partial y},m_{-l})}{V(l)}\\
 & =\frac{\left(e^{x}-1\right)e^{y}}{\left(e^{y}-1\right)f(x,y)},
\end{align*}
where $V(l)=\int l\mathrm{d}m_{-l}=2(l_{1}\cdot\mathbf{p}_{1}+l_{2}\cdot\mathbf{p}_{2}+l_{3}\cdot\mathbf{p_{3}})$
and 
\[
f(x,y)=\left(xe^{x+y}+ye^{x+y}-\log\left(\left(e^{x}-1\right)\left(e^{y}-1\right)\right)+2\left(-e^{x+y}+e^{x}+e^{y}\right)\log\left(\frac{1}{2}\sqrt{\left(e^{x}-1\right)\left(e^{y}-1\right)}\right)-e^{x}x-e^{y}y+\log(4)\right)
\]

\end{prop}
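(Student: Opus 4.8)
The plan is to exploit the fact that the Weil--Petersson metric is conformal to the pressure metric with conformal factor $1/V(l)$, so that the whole computation reduces to the already--treated pressure metric together with a single evaluation of $V(l)$. By the definitions of the two metrics, $\|\phi\|_{WP}^2=\|\phi\|_P^2/V(l)$ for every $\phi\in T_l\mathcal{M}_{\mathcal{G}_3}^1$, where $V(l)=\int l\,\mathrm{d}m_{-l}$. Applying this to $\phi=\partial l/\partial x$, to $\phi=\partial l/\partial y$, and to $\phi=\partial l/\partial x+\partial l/\partial y$, the polarization (parallelogram) identity that produced $F_P$ in the previous proposition goes through verbatim after dividing by the common factor $V(l)$; hence $F_{WP}=F_P/V(l)$, and since the previous proposition already gives $F_P=0$ we obtain $F_{WP}=0$ with no further work. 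Thus only $E_{WP}=E_P/V(l)$ and $G_{WP}=G_P/V(l)$ remain, and for these it suffices to identify $V(l)$.

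First I would record the explicit inputs from the recipe: the invariant vector $\mathbf{p}=(\mathbf{p}_1,\dots,\mathbf{p}_6)$ with $\mathbf{p}_{i+3}=\mathbf{p}_i$, and the parametrization $l=(x,y,S(x,y))$ with $S(x,y)=\log 2-\tfrac12\log\big((e^x-1)(e^y-1)\big)$. Writing $D(x,y):=4e^{x+y}-3e^x-3e^y+2$ for the common denominator that already appears in $E_P$ and $G_P$, the useful observation is the factorization $4-6e^x-6e^y+8e^{x+y}=2D(x,y)$, which lets me put $\mathbf{p}_1=\tfrac{e^x(e^y-1)}{2D}$, $\mathbf{p}_2=\tfrac{e^y(e^x-1)}{2D}$ and $\mathbf{p}_3=\tfrac{(e^x-1)(e^y-1)}{D}$ over a single denominator. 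Then, using $V(l)=2\big(x\,\mathbf{p}_1+y\,\mathbf{p}_2+S(x,y)\,\mathbf{p}_3\big)$ from Proposition \ref{Prop: var formula graph},
\[
V(l)=\frac{x e^x(e^y-1)+y e^y(e^x-1)+2S(x,y)(e^x-1)(e^y-1)}{D(x,y)}.
\]

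The remaining step, and the only place requiring care, is to check that the numerator above is exactly the quantity $f(x,y)$ in the statement, i.e. that $V(l)\,D(x,y)=f(x,y)$. Here I would expand $(e^x-1)(e^y-1)=e^{x+y}-e^x-e^y+1$ in the third summand and substitute $S=-\log\!\big(\tfrac12\sqrt{(e^x-1)(e^y-1)}\big)$. The term $2S(x,y)(e^{x+y}-e^x-e^y+1)$ then splits as $2(-e^{x+y}+e^x+e^y)\log\!\big(\tfrac12\sqrt{(e^x-1)(e^y-1)}\big)$ plus the stray contribution $-2\log\!\big(\tfrac12\sqrt{(e^x-1)(e^y-1)}\big)=\log 4-\log\big((e^x-1)(e^y-1)\big)$; collecting these together with $x e^x(e^y-1)+y e^y(e^x-1)=xe^{x+y}+ye^{x+y}-e^x x-e^y y$ reproduces $f(x,y)$ term by term. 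Substituting $V(l)=f(x,y)/D(x,y)$ into $E_{WP}=E_P/V(l)$ and $G_{WP}=G_P/V(l)$ cancels the factor $D(x,y)$ against the denominators of $E_P$ and $G_P$ and yields the stated formulas. The main obstacle is precisely this bookkeeping of the logarithmic terms, namely tracking the $-1$ in $(e^x-1)(e^y-1)$ correctly into the $\log 4$ and $-\log\big((e^x-1)(e^y-1)\big)$ pieces of $f$; everything else is routine substitution.
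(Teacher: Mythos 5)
Your proposal is correct and follows essentially the same route as the paper: the paper also obtains the Weil--Petersson form by dividing the pressure-metric quantities $E_P$, $F_P=0$, $G_P$ by the volume term $V(l)=\int l\,\mathrm{d}m_{-l}=2(x\,\mathbf{p}_1+y\,\mathbf{p}_2+S\,\mathbf{p}_3)$, the only real work being the identification of $V(l)$ with $f(x,y)/(4e^{x+y}-3e^x-3e^y+2)$, which you carry out correctly, including the bookkeeping of the $\log 4$ and $-\log\bigl((e^x-1)(e^y-1)\bigr)$ terms coming from the constant $+1$ in $(e^x-1)(e^y-1)$.
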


\begin{prop}
\label{prop: Dumbbell Pressure}The moduli space of dumbbells is positively
curved under the pressure metric $||\cdot||_{P}$. i.e., $(\mathcal{M}_{\mathcal{G}_{3}}^{1},||\cdot||_{P})$
is positively curved. More precisely, the Gaussian curvature is strictly
bigger than zero, but has no upper bound. Moreover, $(\mathcal{M}_{\mathcal{G}_{3}}^{1},||\cdot||_{P})$
is incomplete.
\end{prop}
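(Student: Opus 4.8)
The plan is to exploit the crucial structural fact that the first fundamental form of $(\mathcal{M}_{\mathcal{G}_3}^1,\|\cdot\|_P)$ is \emph{orthogonal}: since $F_P(x,y)=0$, the Brioschi formula collapses to the classical Gaussian-curvature expression in orthogonal coordinates,
\[
K_P=-\frac{1}{2\sqrt{E_P G_P}}\left[\frac{\partial}{\partial x}\!\left(\frac{\partial_x G_P}{\sqrt{E_P G_P}}\right)+\frac{\partial}{\partial y}\!\left(\frac{\partial_y E_P}{\sqrt{E_P G_P}}\right)\right],
\]
which is far more tractable than the full determinant. I would first substitute $u=e^x>1$, $v=e^y>1$, turning $E_P$ and $G_P$ into rational functions of $(u,v)$ sharing the common denominator factor $D:=4uv-3u-3v+2$, and record that the admissible region is $\mathcal{D}=\{(x,y):x,y>0,\ (e^x-1)(e^y-1)<4\}$. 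A short check shows that on $\mathcal{D}$ one has $E_P,G_P>0$ and $D>0$, with $D\to 0$ \emph{only} at the corner $(u,v)\to(1,1)$, i.e. $(x,y)\to(0,0)$; on the boundary arc $(e^x-1)(e^y-1)=4$ one computes $D=u+v+14$, so $D$ is bounded away from $0$ there.

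Carrying out the differentiations and clearing denominators presents $K_P$ as a single rational function whose denominator is a manifest square, so the sign of $K_P$ equals the sign of its numerator. The positivity assertion then reduces to a polynomial inequality: after the further substitution $u=1+s$, $v=1+t$ with $s,t>0$ (which clears the factors $e^x-1$, $e^y-1$ that vanish on the boundary), I would show the resulting numerator has all nonnegative coefficients, or, failing a fully positive presentation, bound it below by regrouping against the squared denominator. This polynomial positivity is the main obstacle: it is the only genuinely global inequality, as opposed to a local asymptotic estimate, and the size of the expression makes a clean factorization non-obvious.

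For the unboundedness I would analyze the behaviour as $(x,y)\to(0,0)$, the unique zero of $D$ in $\overline{\mathcal{D}}$. Taylor expansion gives $D=(x+y)+O(|(x,y)|^2)$, whence $E_P\sim \frac{y}{x(x+y)}$ and $G_P\sim \frac{x}{y(x+y)}$, so the leading metric is homogeneous of degree $-1$, i.e. $\Phi_\lambda^*g\sim\lambda g$ under the scaling $\Phi_\lambda(x,y)=(\lambda x,\lambda y)$. Since Gaussian curvature transforms as $K_{\lambda g}=\lambda^{-1}K_g$, the leading term of $K_P$ is homogeneous of degree $-1$, and the explicit formula then confirms $K_P(\lambda x_0,\lambda y_0)\asymp\lambda^{-1}\to+\infty$ along any ray into the corner; hence $K_P$ has no upper bound. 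Reassuringly, this blow-up sits exactly at the incompleteness point found next.

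Finally, for incompleteness I would follow Proposition \ref{prop: fig 8 press incomplete} and the belt-buckle argument, testing the diagonal path $c(x)=l(x,x)$. By the $x\leftrightarrow y$ symmetry one has $E_P(x,x)=G_P(x,x)$, and since $F_P=0$ the tangent vector $(1,1)$ satisfies
\[
\|\dot c(x)\|_P^2=E_P(x,x)+G_P(x,x)=\frac{e^x}{(2e^x-1)(e^x-1)}.
\]
As $x\to0^+$ this is $\asymp 1/x$, so $\|\dot c(x)\|_P\asymp x^{-1/2}$; since $\int_0^1 x^{-1/2}\,dx<\infty$, the path $c$ reaches the boundary corner $(0,0)$ in finite $d_P$-distance, and therefore $(\mathcal{M}_{\mathcal{G}_3}^1,\|\cdot\|_P)$ is incomplete.
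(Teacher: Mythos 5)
Your incompleteness argument is correct and is essentially the paper's: the same diagonal path $c(x)=l(x,x)$, the same computation
\[
\|\dot c(x)\|_P^2=E_P(x,x)+G_P(x,x)=\frac{e^x}{(2e^x-1)(e^x-1)}\asymp\frac{1}{x}\quad(x\to0^+),
\]
and the same integrability conclusion. The genuine gap is in the curvature claims, which are the heart of the proposition. You reduce positivity of $K_P$ to a polynomial inequality that you explicitly leave unresolved (``the main obstacle \dots\ a clean factorization non-obvious''), and your unboundedness argument defers to ``the explicit formula'' that you never actually produce; as a standalone scaling heuristic it is also incomplete, since the Gaussian curvature is not determined by the leading-order term of the metric coefficients without $C^2$ control of the error terms. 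So, as written, neither $K_P>0$ nor the blow-up at the corner is proved.

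What you are missing is that the computation you set up does not end in an intractable polynomial at all: substituting $E_P$, $G_P$, $F_P=0$ into your orthogonal-coordinates formula (equivalently, the Brioschi formula, which is what the paper uses) collapses to the closed form
\[
K_P(x,y)=\frac{2e^{x+y}-1}{4e^{x+y}-3e^x-3e^y+2}.
\]
With $u=e^x>1$, $v=e^y>1$ the denominator satisfies the identity $4uv-3u-3v+2=3(u-1)(v-1)+(uv-1)>0$ and the numerator is $2uv-1>1$, so strict positivity is immediate; and as $(x,y)\to(0,0)$ the denominator tends to $0^+$ (it equals $x+y+4xy$ in the variables $u=1+x$, $v=1+y$) while the numerator tends to $1$, so $K_P\to+\infty$, confirming the $\lambda^{-1}$ rate you predicted. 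The paper pushes the same formula further to get a uniform lower bound: $\partial_u K_P=\partial_v K_P=0$ has no solution in the region $\{u,v>1,\ (u-1)(v-1)<4\}$, so the infimum is attained on its boundary, where one checks $K_P\geq c_0>0$ (numerically $c_0\approx 0.85$). Your plan would have succeeded had you carried the differentiation through; the error was presuming the output would be too large to handle rather than computing it.
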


\begin{proof}
Applying the Brioschi formula, we can write down the curvature explicitly:
\[
K_{P}(x,y)=\frac{2e^{x+y}-1}{4e^{x+y}-3e^{x}-3e^{y}+2}
\]
where $x=l_{1}$ and $y=l_{2}$. It is clear that $K_{P}(x,y)>0$
for all $x,y>0$, and ${\displaystyle \lim_{(x,y)\to(0,0)}K_{p}(x,y)=\infty}$.
Moreover, set $a=e^{x}$ and $b=e^{y}$, 
\[
K_{P}(a,b)=\frac{2ab-1}{4ab-3a-3b+2}
\]
provided $a>1$, $b>1$ and $(a-1)(b-1)<4$. Since $\partial_{a}K_{P}(a,b)=0$
and $\partial_{b}K_{P}(a,b)=0$ has no real solution, we know the
extreme values of $K_{P}(a,b)$ could only happen on the boundaries
$a=1$, $b=1$ or $(a-1)(b-1)<4$. Easy computation shows that $K_{P}(a,b)$
is strictly bigger than 0 on these boundaries. Therefore, we can conclude
that $K_{P}(x,y)>c_{0}>0$ for some $c_{0}$. And Mathematica computation
indicates that $c_{0}$ is about 0.85.

To prove the last assertion, we consider the path $c(x)=l(x,x)=(x,x,\log(2)-\frac{1}{2}\log\left(\left(e^{x}-1\right)\left(e^{y}-1\right)\right)).$
We repeat the argument that we used in Proposition \ref{prop: fig 8 press incomplete}.
Because when $x$ goes to $0$, $c(x)$ goes to the boundary of $\mathcal{M}_{\mathcal{G}_{2}}^{1}$,
to prove the incompleteness we only need to show that $c(x)$ goes
to boundary in finite time. Since 
\begin{align*}
||\dot{c}(x)||_{P}^{2}= & P_{3}(x,x)\cdot\frac{d^{2}}{dx^{2}}\left(\log(2)-\log\left(e^{x}-1\right)\right)+P_{6}(x,x)\cdot\frac{d^{2}}{dx^{2}}\left(\log(2)-\log\left(e^{x}-1\right)\right)\\
= & \frac{e^{x}}{-3e^{x}+2e^{2x}+1},
\end{align*}
it is clear that $\int_{0}^{1}||\dot{c}(x)||_{P}\mathrm{d}x$ is convergent.
Hence the geodesic distance from $c(1)$ to the boundary point $c(0)$
is finite. 
\end{proof}

\begin{obs}\label{obs: Dumbbell WP}The sectional curvature of $(\mathcal{M}_{\mathcal{G}_{3}}^{1},||\cdot||_{WP})$
takes positive and negative values.

\end{obs}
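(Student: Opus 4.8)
The plan is to compute the Gaussian curvature $K_{WP}(x,y)$ of $(\mathcal{M}_{\mathcal{G}_{3}}^{1},||\cdot||_{WP})$ and then exhibit one point of the admissible region $\{x>0,\,y>0,\,(e^{x}-1)(e^{y}-1)<4\}$ where it is positive and one where it is negative. I would organize the computation around the conformality already noted for these two metrics: since $\left\Vert \phi\right\Vert_{WP}^{2}=\left\Vert \phi\right\Vert_{P}^{2}/V(l)$ with $V(l)=\int l\,\mathrm{d}m_{-l}>0$, the two first fundamental forms differ by the positive factor $1/V(l)$, and I would use this both to guide the computation and to explain the sign change.

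First I would record that $F_{WP}(x,y)=0$, so $(x,y)$ are orthogonal coordinates for both metrics. For an orthogonal form $ds^{2}=E\,dx^{2}+G\,dy^{2}$ the Brioschi formula collapses to
$$K=-\frac{1}{2\sqrt{EG}}\left(\frac{\partial}{\partial x}\frac{G_{x}}{\sqrt{EG}}+\frac{\partial}{\partial y}\frac{E_{y}}{\sqrt{EG}}\right),$$
into which I would substitute $E_{WP}=\dfrac{e^{x}(e^{y}-1)}{(e^{x}-1)f(x,y)}$ and $G_{WP}=\dfrac{(e^{x}-1)e^{y}}{(e^{y}-1)f(x,y)}$ to obtain a closed expression for $K_{WP}$ on the admissible region. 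The symmetry $G_{WP}(x,y)=E_{WP}(y,x)$ (both $f$ and the construction are symmetric under $x\leftrightarrow y$) cuts the bookkeeping roughly in half.

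The conformal relation then explains why a sign change is to be expected. Applying the two-dimensional conformal transformation law with $1/V(l)=e^{2u}$, i.e. $u=-\tfrac12\log V(l)$, gives
$$K_{WP}=V(l)\left(K_{P}+\tfrac12\,\Delta_{P}\log V(l)\right),$$
where $\Delta_{P}$ is the Laplace--Beltrami operator of $||\cdot||_{P}$. By Proposition~\ref{prop: Dumbbell Pressure} we have $K_{P}>0$ throughout the region, so the first term is everywhere positive and any negativity of $K_{WP}$ must come from the term $\tfrac12\Delta_{P}\log V(l)$ being sufficiently negative to overwhelm $K_{P}$ in part of the region. Thus the sign of $K_{WP}$ is a competition between the positive curvature inherited from the pressure metric and the concavity of the volume normalization $\log V(l)$ in the pressure-metric geometry, which tells me exactly where to search numerically for each sign.

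The main obstacle is that $f(x,y)$ mixes polynomials in $e^{x},e^{y}$ with the logarithm $\log\!\bigl(\tfrac12\sqrt{(e^{x}-1)(e^{y}-1)}\bigr)$, so $K_{WP}$ is transcendental and admits no tractable factorization of the kind that made $K_{P}$ easy to sign in Proposition~\ref{prop: Dumbbell Pressure}. Consistent with the status of this statement as an Observation, I would therefore finish the verification numerically: evaluate the closed-form $K_{WP}$ on a grid over the admissible region in Mathematica, plot its graph, and report explicit sample points $(x_{+},y_{+})$ with $K_{WP}>0$ and $(x_{-},y_{-})$ with $K_{WP}<0$. Continuity of $K_{WP}$ on the connected admissible region, together with these two sample values, then yields the claim.
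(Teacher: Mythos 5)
Your proposal is correct and takes essentially the same route as the paper: the paper likewise computes the explicit $K_{WP}(x,y)$ via the Brioschi formula (the expression is relegated to the appendix) and then finishes with Mathematica evidence, reporting $K_{WP}>0$ near $(x,y)\approx(0.05,3)$ and $K_{WP}<0$ near $(0.05,0.05)$. Your two refinements---exploiting $F_{WP}=0$ to use the simpler orthogonal-coordinates curvature formula, and the conformal decomposition $K_{WP}=V(l)\bigl(K_{P}+\tfrac{1}{2}\Delta_{P}\log V(l)\bigr)$ as an explanation of why a sign change can occur despite $K_{P}>0$---are both sound additions, though note that exhibiting two sample points of opposite sign already proves the statement without invoking continuity.
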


\begin{proof}[Evidence]

It is the same as in the belt buckle case, using the Brioschi formula,
we can have the explicit expression of $K_{WP}(x,y)$ which, in the
sake of brevity, is stated the appendix. The following figure is the
graph of $K_{WP}(x,y)$ produced by Mathematica for $0.05<x,y<3$
and. Moreover, Mathematica computation shows that $K_{WP}$ takes
positive (e.g. when $x$ is close to 0.05 and $y$ is close to 3)
and negative values (e.g. when $x$ and $y$ are close to 0.05).

\begin{center}\includegraphics[scale=0.8]{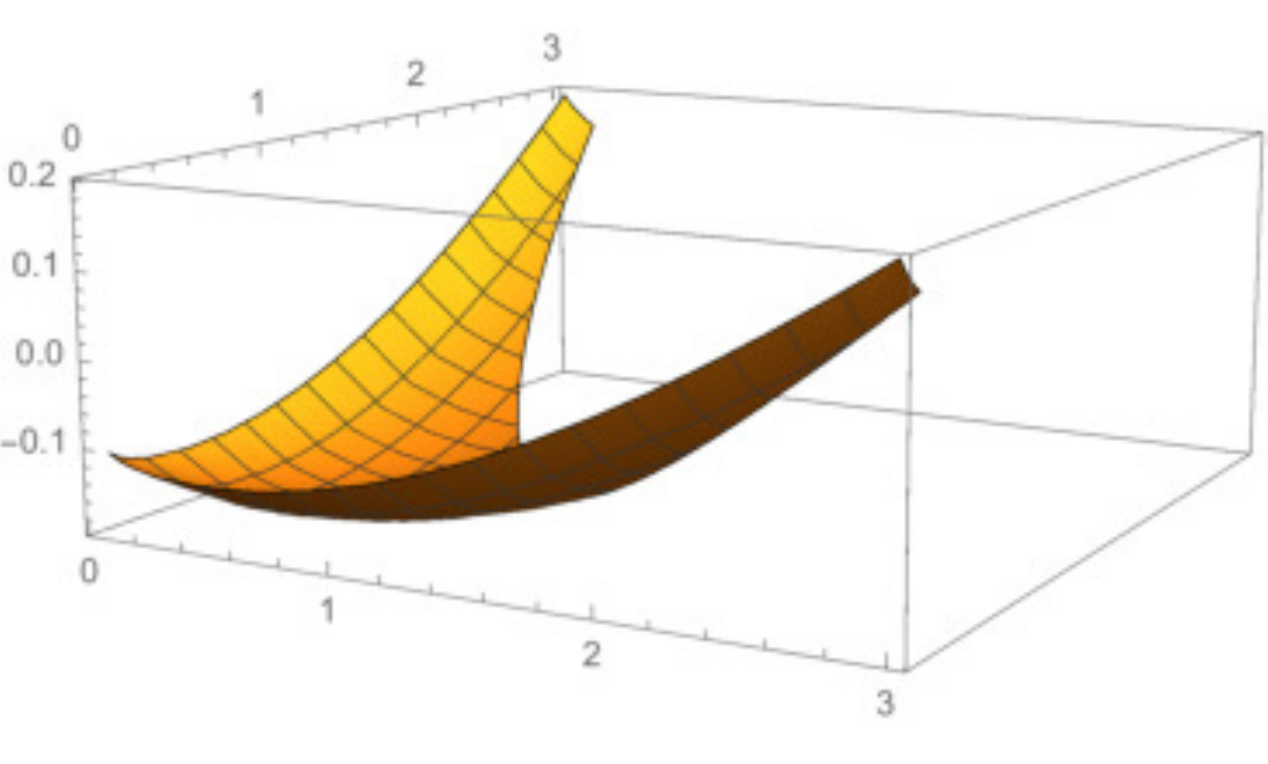}\end{center}

\end{proof}

\subsection{Three-petal roses}

\textsf{The fourth example is a three-petal rose $\mathcal{G}_{4}$,
which could be thought of as a generalization of the figure 8 . The
picture below is a picture of a three-petal rose. For brevity, we
denote $l(e_{1})$, $l(e_{2})$ and $l(e_{3})$ by $x$, $y$ and
$z$, respectively. }

\begin{center}

\includegraphics[scale=0.4]{3-petal_rose}\end{center}

\textsf{Follow the recipe, we have:}

\begin{itemize}

\item$A=\left(\begin{array}{cccccc}
1 & 1 & 1 & 0 & 1 & 1\\
1 & 1 & 1 & 1 & 0 & 1\\
1 & 1 & 1 & 1 & 1 & 0\\
0 & 1 & 1 & 1 & 1 & 1\\
1 & 0 & 1 & 1 & 1 & 1\\
1 & 1 & 0 & 1 & 1 & 1
\end{array}\right)$, $A_{-l}=\left(\begin{array}{cccccc}
e^{-x} & e^{-x} & e^{-x} & 0 & e^{-x} & e^{-x}\\
e^{-y} & e^{-y} & e^{-y} & e^{-y} & 0 & e^{-y}\\
e^{-z} & e^{-z} & e^{-z} & e^{-z} & e^{-z} & 0\\
0 & e^{-x} & e^{-x} & e^{-x} & e^{-x} & e^{-x}\\
e^{-y} & 0 & e^{-y} & e^{-y} & e^{-y} & e^{-y}\\
e^{-z} & e^{-z} & 0 & e^{-z} & e^{-z} & e^{-z}
\end{array}\right).$

\item$h(l)=1$ $\implies$$\det(A_{-l}-Id)=0\implies$$e^{-z}=-\frac{-e^{x+y}+e^{x}+e^{y}+3}{e^{x+y}+3e^{x}+3e^{y}+5}$\textsf{
.}

\item $\mathbf{v}=\left(\frac{2\left(e^{-y}+1\right)}{e^{-x}\left(5e^{-y}+3\right)+3e^{-y}+1},\frac{2\left(e^{-x}+1\right)}{e^{-x}\left(5e^{-y}+3\right)+3e^{-y}+1},1,\frac{2\left(e^{-y}+1\right)}{e^{-x}\left(5e^{-y}+3\right)+3e^{-y}+1},\frac{2\left(e^{-x}+1\right)}{e^{-x}\left(5e^{-y}+3\right)+3e^{-y}+1},1\right).$

\item $P=\left(\begin{array}{cccccc}
\frac{2\left(1+e^{-y}\right)}{e^{-x}\left(3+5e^{-y}\right)+3e^{-y}+1} & 0 & 0 & 0 & 0 & 0\\
0 & \frac{2\left(1+e^{-x}\right)}{e^{-x}\left(3+5e^{-y}\right)+3e^{-y}+1} & 0 & 0 & 0 & 0\\
0 & 0 & 1 & 0 & 0 & 0\\
0 & 0 & 0 & \frac{2\left(1+e^{-y}\right)}{e^{-x}\left(3+5e^{-y}\right)+3e^{-y}+1} & 0 & 0\\
0 & 0 & 0 & 0 & \frac{2\left(1+e^{-x}\right)}{e^{-x}\left(3+5e^{-y}\right)+3e^{-y}+1} & 0\\
0 & 0 & 0 & 0 & 0 & 1
\end{array}\right)$.

\item $\mathbf{p}=\left(\begin{array}{c}
\mathbf{p}_{1}(x,y)\\
\mathbf{p}_{2}(x,y)\\
\mathbf{p}_{3}(x,y)\\
\mathbf{p}_{4}(x,y)\\
\mathbf{p}_{5}(x,y)\\
\mathbf{p}_{6}(x,y)
\end{array}\right)=\left(\begin{array}{c}
\frac{2e^{x}\left(e^{y}+1\right)^{2}}{12e^{x+y}+e^{2(x+y)}+6e^{2x+y}+6e^{x+2y}-10e^{x}-3e^{2x}-10e^{y}-3e^{2y}-15}\\
\frac{2\left(e^{x}+1\right)^{2}e^{y}}{12e^{x+y}+e^{2(x+y)}+6e^{2x+y}+6e^{x+2y}-10e^{x}-3e^{2x}-10e^{y}-3e^{2y}-15}\\
\frac{\left(e^{x+y}-e^{x}-e^{y}-3\right)\left(e^{x+y}+3e^{x}+3e^{y}+5\right)}{2\left(12e^{x+y}+e^{2(x+y)}+6e^{2x+y}+6e^{x+2y}-10e^{x}-3e^{2x}-10e^{y}-3e^{2y}-15\right)}\\
\mathbf{p}_{1}(x,y)\\
\mathbf{p}_{2}(x,y)\\
\mathbf{p}_{3}(x,y)
\end{array}\right)$ .

\item\textsf{ Now we consider the surface }$\mathcal{M}_{\mbox{\ensuremath{\mathcal{G}}}_{4}}^{1}\ni l=(l_{1},l_{2},l_{3})=(x,y,-\text{\ensuremath{\log}}(-\frac{-e^{x+y}+e^{x}+e^{y}+3}{e^{x+y}+3e^{x}+3e^{y}+5}))=l(x,y)$. 

\item $\frac{\partial}{\partial x}l=(1,0,\frac{4e^{x}\left(e^{y}+1\right)^{2}}{\left(-e^{x+y}+e^{x}+e^{y}+3\right)\left(e^{x+y}+3e^{x}+3e^{y}+5\right)})$,
$\frac{\partial l}{\partial y}=(0,1,\frac{4\left(e^{x}+1\right)^{2}e^{y}}{\left(-e^{x+y}+e^{x}+e^{y}+3\right)\left(e^{x+y}+3e^{x}+3e^{y}+5\right)}),$ 

$\frac{\partial^{2}}{\partial x^{2}}l=(0,0,\frac{4e^{x}\left(e^{y}+1\right)^{2}\left(e^{y}+3\right)\left(e^{2x+y}-e^{2x}+3e^{y}+5\right)}{\left(-e^{x+y}+e^{x}+e^{y}+3\right)^{2}\left(e^{x+y}+3e^{x}+3e^{y}+5\right)^{2}}),$$\frac{\partial^{2}}{\partial x\partial y}l=(0,0,\frac{32\left(e^{x}+1\right)\left(e^{y}+1\right)e^{x+y}\left(e^{x}+e^{y}+2\right)}{\left(-e^{x+y}+e^{x}+e^{y}+3\right)^{2}\left(e^{x+y}+3e^{x}+3e^{y}+5\right)^{2}})$,\textsf{ }

\textsf{and} $\frac{\partial^{2}}{\partial y^{2}}l=(0,0,\frac{4\left(e^{x}+1\right)^{2}\left(e^{x}+3\right)e^{y}\left(e^{x+2y}+3e^{x}-e^{2y}+5\right)}{\left(-e^{x+y}+e^{x}+e^{y}+3\right)^{2}\left(e^{x+y}+3e^{x}+3e^{y}+5\right)^{2}}).$

\end{itemize}

\begin{rem}
There is a hidden condition $l_{3}>0$. In this case, the condition
is equivalent to 
\[
-e^{x+y}+e^{x}+e^{y}+3<0.
\]

\end{rem}

\textsf{Since $\mathcal{M}_{\mathcal{G}_{4}}^{1}$ is still a two-dimensional
manifold in $\real^{3}$, so we repeat the same argument as for $\mathcal{M}_{\mathcal{G}_{2}}^{1}$.
Set $x=l_{1}$, $y=l_{2}$ we have the following results of the first
fundamental form with respect to $||\cdot||_{P}$ and $||\cdot||_{WP}$.}

\begin{prop}
The first fundamental form of $(\mathcal{M}_{\mathcal{G}_{4}}^{1},||\cdot||_{P})$
is 
\begin{align*}
E_{P}(x,y) & =\mathrm{Var}(\frac{\partial l}{\partial x},m_{-l})=2\cdot\frac{\partial l}{\partial x}\cdot\mathbf{p}_{3}(x,y)\\
 & =-\frac{4e^{x}\left(e^{y}+1\right)^{2}\left(e^{y}+3\right)\left(e^{2x+y}-e^{2x}+3e^{y}+5\right)}{\left(-e^{x+y}+e^{x}+e^{y}+3\right)\left(e^{x+y}+3e^{x}+3e^{y}+5\right)w(x,y)},\\
F_{P}(x,y) & =\frac{1}{2}\left(\mathrm{Var}(\frac{\partial l}{\partial x}+\frac{\partial l}{\partial y},m_{-l})-\mathrm{Var}(\frac{\partial l}{\partial x},m_{-l})-\mathrm{Var}(\frac{\partial l}{\partial y},m_{-l})\right)\\
 & =-\frac{32\left(e^{x}+1\right)\left(e^{y}+1\right)e^{x+y}\left(e^{x}+e^{y}+2\right)}{\left(-e^{x+y}+e^{x}+e^{y}+3\right)\left(e^{x+y}+3e^{x}+3e^{y}+5\right)w(x,y)},\\
G_{P}(x,y) & =\mathrm{Var}(\frac{\partial l}{\partial y},m_{-l})=2\cdot\frac{\partial l}{\partial y}\cdot\mathbf{p}_{3}(x,y)\\
 & =-\frac{4\left(e^{x}+1\right)^{2}\left(e^{x}+3\right)e^{y}\left(e^{x+2y}+3e^{x}-e^{2y}+5\right)}{\left(-e^{x+y}+e^{x}+e^{y}+3\right)\left(e^{x+y}+3e^{x}+3e^{y}+5\right)w(x,y)}.
\end{align*}
 where $w(x,y)=\left(12e^{x+y}+e^{2(x+y)}+6e^{2x+y}+6e^{x+2y}-10e^{x}-3e^{2x}-10e^{y}-3e^{2y}-15\right)$
\end{prop}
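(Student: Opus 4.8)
The plan is to derive all three coefficients directly from Proposition~\ref{Prop: var formula graph}, exactly as in the belt buckle case, rather than computing any variance from its definition. First I would record the structural fact that makes this work: since the parametrization is $l(x,y)=(x,y,S(x,y))$ with $x=l(e_1)$ and $y=l(e_2)$ independent and $z=l(e_3)=S(x,y)$, the second derivatives $\partial^2 l/\partial x^2$, $\partial^2 l/\partial x\partial y$ and $\partial^2 l/\partial y^2$ are locally constant functions on $\Sigma_A^+$ that vanish on the directed edges carrying lengths $x$ and $y$ and take the values $\partial^2 S/\partial x^2$, $\partial^2 S/\partial x\partial y$, $\partial^2 S/\partial y^2$ on the two directed edges $e_3,\overline{e_3}$ (indices $3$ and $6$) carrying length $z$. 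Because $P(-l_t)=0$ along each coordinate path, Theorem~\ref{thm:(Derivatives-of-the} gives $\int\dot l_0\,\mathrm{d}m_{-l}=0$, so the tangent vectors $\partial l/\partial x$ and $\partial l/\partial y$ lie in $T_l\mathcal{M}_{\mathcal{G}_4}^1$ and the first-derivative hypothesis of \eqref{eq: var twice derivateve} is met.

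With this in place, \eqref{eq: var twice derivateve} reduces each variance to a finite weighted sum $\sum_i\ddot l_0(i)\,\mathbf{p}_i$. For the diagonal entries I would invoke the symmetry of the equilibrium vector displayed above, namely $\mathbf{p}_4=\mathbf{p}_1$, $\mathbf{p}_5=\mathbf{p}_2$ and $\mathbf{p}_6=\mathbf{p}_3$, which collapses the sum to the single surviving pair of indices and yields
\[
E_P=\mathrm{Var}\Bigl(\tfrac{\partial l}{\partial x},m_{-l}\Bigr)=\bigl(\mathbf{p}_3+\mathbf{p}_6\bigr)\frac{\partial^2 S}{\partial x^2}=2\,\mathbf{p}_3\,\frac{\partial^2 S}{\partial x^2},
\]
together with the analogous identity $G_P=2\,\mathbf{p}_3\,\partial^2 S/\partial y^2$.

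For the off-diagonal coefficient I would use the polarization (parallelogram) identity exactly as in the belt buckle proposition. Since $\partial^2 l/\partial x^2+2\,\partial^2 l/\partial x\partial y+\partial^2 l/\partial y^2$ is the second derivative of $l$ along the diagonal direction $\partial/\partial x+\partial/\partial y$, formula \eqref{eq: var twice derivateve} gives $\mathrm{Var}(\partial l/\partial x+\partial l/\partial y,m_{-l})=2\,\mathbf{p}_3(\partial^2 S/\partial x^2+2\,\partial^2 S/\partial x\partial y+\partial^2 S/\partial y^2)$; subtracting $E_P$ and $G_P$ and halving leaves
\[
F_P=2\,\mathbf{p}_3\,\frac{\partial^2 S}{\partial x\partial y}.
\]

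The only remaining task is substitution and simplification: plug in the explicit $\mathbf{p}_3(x,y)$ from the displayed equilibrium vector together with the already-computed $\partial^2 S/\partial x^2$, $\partial^2 S/\partial x\partial y$, $\partial^2 S/\partial y^2$ listed above, and clear the common factors $(-e^{x+y}+e^{x}+e^{y}+3)$ and $(e^{x+y}+3e^{x}+3e^{y}+5)$ to reach the stated closed forms (here $w(x,y)$ is the exponential polynomial displayed in the statement). I expect this algebra to be the main obstacle---not conceptually, but because the exponential polynomials are bulky and the cancellations that produce the compact numerators are easy to mishandle by hand; this is precisely the step I would verify with a computer algebra system.
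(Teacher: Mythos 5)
Your proposal is correct and follows essentially the same route as the paper: the paper proves this proposition by ``repeating the argument for $\mathcal{M}_{\mathcal{G}_2}^1$,'' i.e.\ applying equation \eqref{eq: var twice derivateve} of Proposition \ref{Prop: var formula graph} (with the symmetry $\mathbf{p}_6=\mathbf{p}_3$ giving the factor $2$) for the diagonal terms and the parallelogram identity for $F_P$, then substituting the displayed $\mathbf{p}_3$ and second derivatives of $S$. You even correctly read the expression $2\cdot\frac{\partial l}{\partial x}\cdot\mathbf{p}_3$ in the statement as the typo it is (it should be $2\cdot\frac{\partial^2 S}{\partial x^2}\cdot\mathbf{p}_3$), and your note that the hypothesis $\int\dot{l}_0\,\mathrm{d}m_{-l}=0$ holds because $P(-l_t)\equiv 0$ makes explicit a point the paper leaves implicit.
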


\begin{prop}
The first fundamental form of $(\mathcal{M}_{\mathcal{G}_{4}}^{1},||\cdot||_{WP})$
is 
\begin{align*}
E_{WP}(x,y) & =\frac{\mathrm{Var}(\frac{\partial l}{\partial x},m_{-l})}{V(l)}\\
 & =\frac{4e^{x}\left(e^{y}+1\right)^{2}\left(e^{y}+3\right)\left(e^{2x+y}-e^{2x}+3e^{y}+5\right)}{f_{1}(x,y)f_{2}(x,y)f_{3}(x,y)},\\
F_{WP}(x,y) & =\frac{\frac{1}{2}\left(\mathrm{Var}(\frac{\partial l}{\partial x}+\frac{\partial l}{\partial y},m_{-l})-\mathrm{Var}(\frac{\partial l}{\partial x},m_{-l})-\mathrm{Var}(\frac{\partial l}{\partial y},m_{-l})\right)}{V(l)}\\
 & =\frac{32\left(e^{x}+1\right)\left(e^{y}+1\right)e^{x+y}\left(e^{x}+e^{y}+2\right)}{f_{1}(x,y)f_{2}(x,y)f_{3}(x,y)},\\
G_{WP}(x,y) & =\frac{\mathrm{Var}(\frac{\partial l}{\partial y},m_{-l})}{V(l)}\\
 & =\frac{4\left(e^{x}+1\right)^{2}\left(e^{x}+3\right)e^{y}\left(e^{x+2y}+3e^{x}-e^{2y}+5\right)}{f_{1}(x,y)f_{2}(x,y)f_{3}(x,y)},
\end{align*}
where $V(l)=\int l\mathrm{d}m_{-l}=2(l_{1}\cdot\mathbf{p}_{1}+l_{2}\cdot\mathbf{p}_{2}+l_{3}\cdot\mathbf{p_{3}})$
and 
\begin{alignat*}{1}
f_{1}(x,y) & =-e^{x+y}+e^{x}+e^{y}+3,\\
f_{2}(x,y) & =e^{x+y}+3e^{x}+3e^{y}+5\\
f_{3}(x,y) & =\left[-4\left(xe^{x+2y}+ye^{2x+y}+2e^{x+y}(x+y)+e^{x}x+e^{y}y\right)+\right.\\
 & (-4e^{x+y}+e^{2(x+y)}+2e^{2x+y}+2e^{x+2y}-14e^{x}-3e^{2x}-14e^{y}-3e^{2y}-15)\cdot\\
 & \left.\log\left(-\frac{-e^{x+y}+e^{x}+e^{y}+3}{e^{x+y}+3e^{x}+3e^{y}+5}\right)\right]
\end{alignat*}

\end{prop}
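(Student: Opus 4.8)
The plan is to exploit the fact that the Weil-Petersson first fundamental form is simply the pressure-metric first fundamental form of the preceding proposition divided everywhere by the single positive scalar $V(l)=\int l\,\mathrm{d}m_{-l}$. Since the variances $E_P,F_P,G_P$ have already been recorded, the only genuinely new computation is the explicit evaluation of $V(l)$; once that is in hand, I divide the three variances by it and simplify.

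First I would reproduce the variance computation exactly as in the pressure-metric case. Because $l$ is locally constant with $l(e_1)=l(e_4)=x$, $l(e_2)=l(e_5)=y$, $l(e_3)=l(e_6)=S(x,y)$, Proposition \ref{Prop: var formula graph} gives $\mathrm{Var}(\tfrac{\partial l}{\partial x},m_{-l})=\sum_i \tfrac{\partial^2 l}{\partial x^2}(i)\,\mathbf{p}_i=2\,\tfrac{\partial^2 S}{\partial x^2}\cdot\mathbf{p}_3$, using the symmetry $\mathbf{p}_{i+3}=\mathbf{p}_i$, and similarly for the $y$-direction. For the off-diagonal term I would use polarization, $F=\tfrac12\bigl(\|\tfrac{\partial l}{\partial x}+\tfrac{\partial l}{\partial y}\|^2-\|\tfrac{\partial l}{\partial x}\|^2-\|\tfrac{\partial l}{\partial y}\|^2\bigr)=2\,\tfrac{\partial^2 S}{\partial x\partial y}\cdot\mathbf{p}_3$, which is precisely the identity established in the belt-buckle case. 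These are the numerators; dividing each by $V(l)$ produces $E_{WP},F_{WP},G_{WP}$.

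Next I would compute the volume factor. Using $m_{-l}[i]=\mathbf{p}_i$ and $\mathbf{p}_{i+3}=\mathbf{p}_i$, one has $V(l)=2\bigl(x\,\mathbf{p}_1+y\,\mathbf{p}_2+S(x,y)\,\mathbf{p}_3\bigr)$. Substituting the explicit $\mathbf{p}_1,\mathbf{p}_2,\mathbf{p}_3$ (all sharing the denominator $w(x,y)$) together with $S(x,y)=-\log\!\bigl(-f_1/f_2\bigr)$, where $f_1=-e^{x+y}+e^x+e^y+3$ and $f_2=e^{x+y}+3e^x+3e^y+5$, I would note $\mathbf{p}_3=-f_1 f_2/(2w)$, so that the logarithmic piece $2S\,\mathbf{p}_3$ carries the factor $-f_1 f_2$ while the $x\,\mathbf{p}_1$ and $y\,\mathbf{p}_2$ terms carry $2e^x(e^y+1)^2$ and $2e^y(e^x+1)^2$ over $w$. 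Clearing the common $w$ then yields exactly $V(l)=-f_3(x,y)/w(x,y)$ with $f_3$ as stated. Since the variance denominators already contain $f_1 f_2\,w$, dividing by $V(l)=-f_3/w$ cancels the $w$, replaces it by $f_3$, and flips the sign, producing the claimed expressions with denominator $f_1 f_2 f_3$.

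The main obstacle is purely the algebraic bookkeeping hidden in the identity $f_3=-w\,V(l)$: one must verify that the logarithmic coefficient of $f_3$ equals $-f_1 f_2$ and that its non-logarithmic part equals $-4\bigl(xe^{x+2y}+ye^{2x+y}+2e^{x+y}(x+y)+e^x x+e^y y\bigr)$, and then confirm the cancellation of $w$ against the variance denominators. These are routine but heavy exponential-polynomial identities, so I would discharge them with computer algebra, keeping the hidden condition $f_1<0$ in force throughout to guarantee that $S(x,y)$, $\mathbf{p}_3$, and $V(l)$ are positive and the metric is well defined.
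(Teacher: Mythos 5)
Your proposal is correct and follows essentially the same route as the paper: variances via Proposition \ref{Prop: var formula graph} with the polarization identity for the cross term (exactly as in the belt-buckle case), then division by $V(l)=2(l_{1}\mathbf{p}_{1}+l_{2}\mathbf{p}_{2}+l_{3}\mathbf{p}_{3})$, with the heavy exponential-polynomial simplification (in particular your key identity $V(l)=-f_{3}/w$, equivalently that the coefficient of the logarithm in $f_{3}$ is $-f_{1}f_{2}$) discharged by computer algebra just as the paper does.
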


\begin{obs}\label{obs: 3-rose Pressure}The moduli space of three-petal
roses is positively curved under the pressure metric $||\cdot||_{P}$.
i.e., $(\mathcal{M}_{\mathcal{G}_{4}}^{1},||\cdot||_{P})$ is positively
curved and the Gaussian curvature is bounded. Moreover, $(\mathcal{M}_{\mathcal{G}_{4}}^{1},||\cdot||_{P})$
is incomplete.

\end{obs}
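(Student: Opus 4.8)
The plan is to mirror the structure of Propositions~\ref{prop: Belt buckle Pressure} and~\ref{prop: Dumbbell Pressure}: first establish the two-sided positive curvature bound, then establish incompleteness, using the first fundamental form $(E_P,F_P,G_P)$ already recorded for $\mathcal{M}_{\mathcal{G}_4}^1$ together with the variance formulas of Proposition~\ref{Prop: var formula graph}.

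For the curvature, I would substitute $E_P,F_P,G_P$ into the Brioschi formula to obtain $K_P(x,y)$ as an explicit rational function of $e^x$ and $e^y$. Since $F_P\neq 0$ here, the resulting expression is comparable in size to the belt-buckle case rather than the clean dumbbell formula, which is exactly why the statement is recorded as an Observation and why the closed form is relegated to the appendix. To read off the bound $0<C_2<K_P<C_1$, the idea is to change variables to $a=e^x,\ b=e^y$, in which the hidden admissibility condition $-e^{x+y}+e^x+e^y+3<0$ becomes the exterior of a hyperbola, $(a-1)(b-1)>4$ with $a,b>1$. On this region I would (i) compare the leading homogeneous parts of the numerator and denominator of $K_P$ to show that $K_P$ approaches a strictly positive constant as $a,b\to\infty$, in direct analogy with the belt-buckle computation that yielded the limit $3/4$; and (ii) control $K_P$ on the remaining part of the region, in particular along and near the bounding hyperbola, by direct estimation with the assistance of Mathematica. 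Steps (i) and (ii) together yield positivity and the finite upper and lower bounds.

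For incompleteness, I would run the diagonal-path argument as in the previous two propositions, but adapted to the correct boundary (note that $x\to 0$ is \emph{not} admissible for this graph, unlike the belt buckle and dumbbell). Take $c(x)=l(x,x)=(x,x,S(x,x))$, where $S(x,x)=\log(e^x+5)-\log(e^x-3)$ and the admissible range is $x\in(\log 3,\infty)$. Because only the third coordinate of $c$ is nonlinear, Proposition~\ref{Prop: var formula graph} reduces the pressure norm to $\|\dot c(x)\|_P^2=2\,\mathbf{p}_3(x,x)\,\frac{d^2}{dx^2}S(x,x)$. I would then expand near the endpoint $x\to(\log 3)^+$, where the edge length $l_3=S$ diverges: setting $u=x-\log 3$, one gets $\frac{d^2}{dx^2}S\asymp u^{-2}$ and $\mathbf{p}_3(x,x)\asymp u$, hence $\|\dot c(x)\|_P^2\asymp u^{-1}$ and $\|\dot c(x)\|_P\asymp (x-\log 3)^{-1/2}$. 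Since $\int (x-\log 3)^{-1/2}\,dx$ converges, the curve reaches the boundary of $\mathcal{M}_{\mathcal{G}_4}^1$ in finite $\|\cdot\|_P$-distance, which establishes incompleteness.

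The hard part is step (ii) of the curvature argument: certifying strictly positive upper and lower bounds over the entire unbounded admissible region. The asymptotic comparison only pins down $K_P$ near infinity, so one must still rule out $K_P$ collapsing to $0$ or blowing up on the interior or along the hyperbola $(a-1)(b-1)=4$, and the sheer size of the explicit curvature makes a self-contained hand estimate impractical. This is precisely why the result is stated as an Observation, supported by Mathematica, rather than proved in closed form as a Proposition.
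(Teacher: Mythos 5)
Your proposal is correct, and on the curvature half it is essentially the paper's own approach: the paper also feeds $E_P,F_P,G_P$ into the Brioschi formula and then relies on Mathematica, reporting the numerical bounds $0.2<K_P(x,y)<1$ (your leading-order analysis in $a=e^x$, $b=e^y$ is the belt-buckle trick and is a reasonable supplement; note only that the paper does not in fact place $K_P$ for the three-petal rose in the appendix --- the appendix contains just the Weil--Petersson curvatures of $\mathcal{G}_2$ and $\mathcal{G}_3$). Where you genuinely diverge is the incompleteness claim. The paper disposes of it in one line: every three-petal rose contains a figure 8, and $(\mathcal{M}_{\mathcal{G}_1}^1,\|\cdot\|_P)$ is already incomplete by Proposition \ref{prop: fig 8 press incomplete}; that is, it appeals to degeneration onto a subgraph rather than to any computation. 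You instead exhibit an explicit finite-length escape path, the diagonal
\[
c(x)=\bigl(x,x,\log(e^{x}+5)-\log(e^{x}-3)\bigr),\qquad x\in(\log 3,\infty),
\]
and your asymptotics check out: writing $a=e^{x}$ and $u=x-\log 3$, one has $\mathbf{p}_3(x,x)=\frac{(a-3)(a+1)^{2}(a+5)}{2\left(a^{4}+12a^{3}+6a^{2}-20a-15\right)}\asymp u$ (the denominator equals $768$ at $a=3$), while $\frac{d^{2}}{dx^{2}}S(x,x)=\frac{5e^{x}}{(e^{x}+5)^{2}}+\frac{3e^{x}}{(e^{x}-3)^{2}}\asymp u^{-2}$, so $\|\dot{c}(x)\|_{P}\asymp u^{-1/2}$ is integrable and the boundary is reached at finite distance. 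This mirrors the proofs of Propositions \ref{prop: Belt buckle Pressure} and \ref{prop: Dumbbell Pressure} and is more self-contained than the paper's one-liner; in fact it substantiates the paper's intuition, since your path tends to the locus $z\to\infty$, i.e. $(e^{x}-1)(e^{y}-1)=4$, which is exactly the entropy-one equation of the figure 8, so the finite-length exit you construct is precisely the degeneration onto the figure-8 subgraph that the paper invokes without verification. The trade-off: the paper's argument is conceptual and instantaneous but sketchy (it never shows the degeneration occurs at finite distance), whereas yours costs a computation but actually proves the assertion; you also correctly flag that $x\to 0$ is inadmissible for this graph, a point where blind imitation of the earlier diagonal arguments would have failed.
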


\begin{proof}[Evidence]

Applying the Brioschi formula, we can write down the curvature explicitly;
however, in this note we only give the figure of the curvature and
avoid stating lengthy results.

\begin{center}

\includegraphics[scale=0.5]{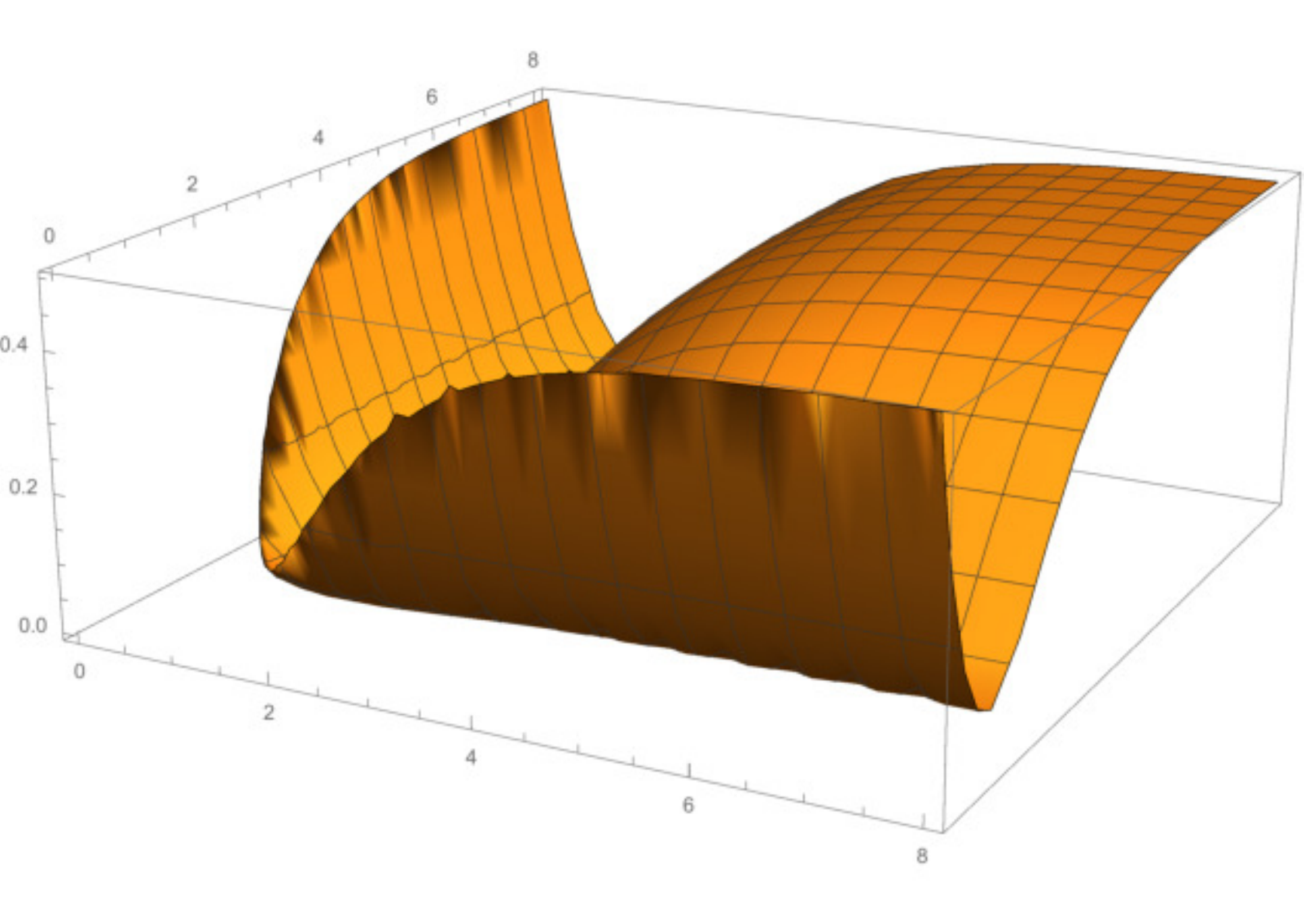}

\end{center}

Moreover, numerical results indicate that $0.2<K_{P}(x,y)<1$. 

The second assertion is because each three-petal rose contains a figure
8, and we know figure 8 is incomplete with respect to the pressure
metric. 

\end{proof}

\begin{obs}

\label{obs: 3-rose-WP}The sectional curvature of $(\mathcal{M}_{\mathcal{G}_{4}}^{1},||\cdot||_{WP})$
takes positive and negative values.

\end{obs}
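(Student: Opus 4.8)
The plan is to follow verbatim the computational strategy already used for the belt-buckle and dumbbell Weil--Petersson observations. First I would feed the three coefficients $E_{WP}$, $F_{WP}$, $G_{WP}$ from the preceding proposition into the Brioschi formula. Since each of these is a rational expression in $e^{x}$ and $e^{y}$ divided by the common factor $f_{1}f_{2}f_{3}$ --- and $f_{3}$ itself carries a logarithmic term $\log\!\left(-\frac{-e^{x+y}+e^{x}+e^{y}+3}{e^{x+y}+3e^{x}+3e^{y}+5}\right)$ --- the required partial derivatives $E_{xx}$, $F_{xy}$, $G_{yy}$, and their companions will mix rational functions of the exponentials with this logarithm. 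Assembling them through the two $3\times 3$ determinants and dividing by $(E_{WP}G_{WP}-F_{WP}^{2})^{2}$ produces a closed-form expression for $K_{WP}(x,y)$ on the admissible region cut out by the hidden condition $-e^{x+y}+e^{x}+e^{y}+3<0$.

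The main obstacle is one of algebraic complexity rather than of idea: the symbolic curvature is far too unwieldy to sign by hand or to control by a clean limiting argument, precisely because of the interaction between the logarithmic $f_{3}$ factor and the polynomial-in-exponentials numerators. This is exactly the situation that motivates the paper's convention of reserving the label \emph{Observation} for conclusions justified by machine computation rather than by a self-contained analytic argument. Accordingly, I would hand the resulting formula to Mathematica, plot $K_{WP}$ over a convenient window such as $0.05<x,y<3$ intersected with the admissible region, and read off the sign behaviour directly from the surface, in the same manner as for $\mathcal{G}_{2}$ and $\mathcal{G}_{3}$.

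To finish I would exhibit explicit sample points witnessing each sign. The natural expectation --- and the feature that makes this rose pattern with the dumbbell (Observation \ref{obs: Dumbbell WP}) rather than with the belt buckle (Observation \ref{obs: Belt buckle WP}) --- is that $K_{WP}$ is negative deep in the interior, where both coordinates are small and near the diagonal, and turns positive in the asymmetric regime where one coordinate is small and the other large. Evaluating $K_{WP}$ numerically at one representative of each type supplies the two required witnesses; recording these values together with the plotted surface then constitutes the evidence for the statement.
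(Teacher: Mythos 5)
Your overall strategy---feed $E_{WP},F_{WP},G_{WP}$ into the Brioschi formula, let Mathematica produce and plot $K_{WP}$, and exhibit one sample point of each sign---is exactly what the paper does for this observation. However, the concrete execution you describe would fail at the final step, because you have transplanted the dumbbell's geometry onto the rose without checking the hidden condition. For $\mathcal{G}_{4}$ the admissibility constraint is $-e^{x+y}+e^{x}+e^{y}+3<0$, i.e.\ $e^{x+y}>e^{x}+e^{y}+3$; on the diagonal $x=y=t$ this reads $(e^{t}-3)(e^{t}+1)>0$, so the moduli space contains no diagonal points with $t\leq\log 3\approx 1.0986$. Thus your proposed negative-curvature witness, ``both coordinates small and near the diagonal,'' describes points that simply do not lie in $\mathcal{M}_{\mathcal{G}_{4}}^{1}$ (unlike the dumbbell, whose region $(e^{x}-1)(e^{y}-1)<4$ does contain the small-diagonal corner). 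For the same reason the window $0.05<x,y<3$ is a poor choice: a large part of it is inadmissible, and it is not clear both signs are even attained there.

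The paper's own evidence uses the much larger window $0.5<x,y<20$ and finds a sign pattern essentially opposite to your prediction: $K_{WP}$ is \emph{positive} in the asymmetric regime, at $(x,y)=(5,15)$, and \emph{negative} far out along the diagonal, at $(x,y)=(19,19)$. So while your computational pipeline is the right one, to make it work you must (i) restrict attention to the correct admissible region determined by $e^{x+y}>e^{x}+e^{y}+3$, and (ii) plot over a window large enough to reach the deep-diagonal regime where the negative curvature actually appears; the witnesses you intended to record would otherwise either be meaningless (outside the space) or exhibit only one sign.
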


\begin{proof}[Evidence]

The explicit formula of $K_{WP}$ is complex and make no sense to
state in here. The following figure of $K_{WP}$ for $0.5<x,y<20$
is produced by Mathematica , which indicates that $K_{WP}$ is positive
when $(x,y)=(5,15)$ and negative when $(x,y)=(19,19)$. 

\begin{center}

\includegraphics[scale=0.5]{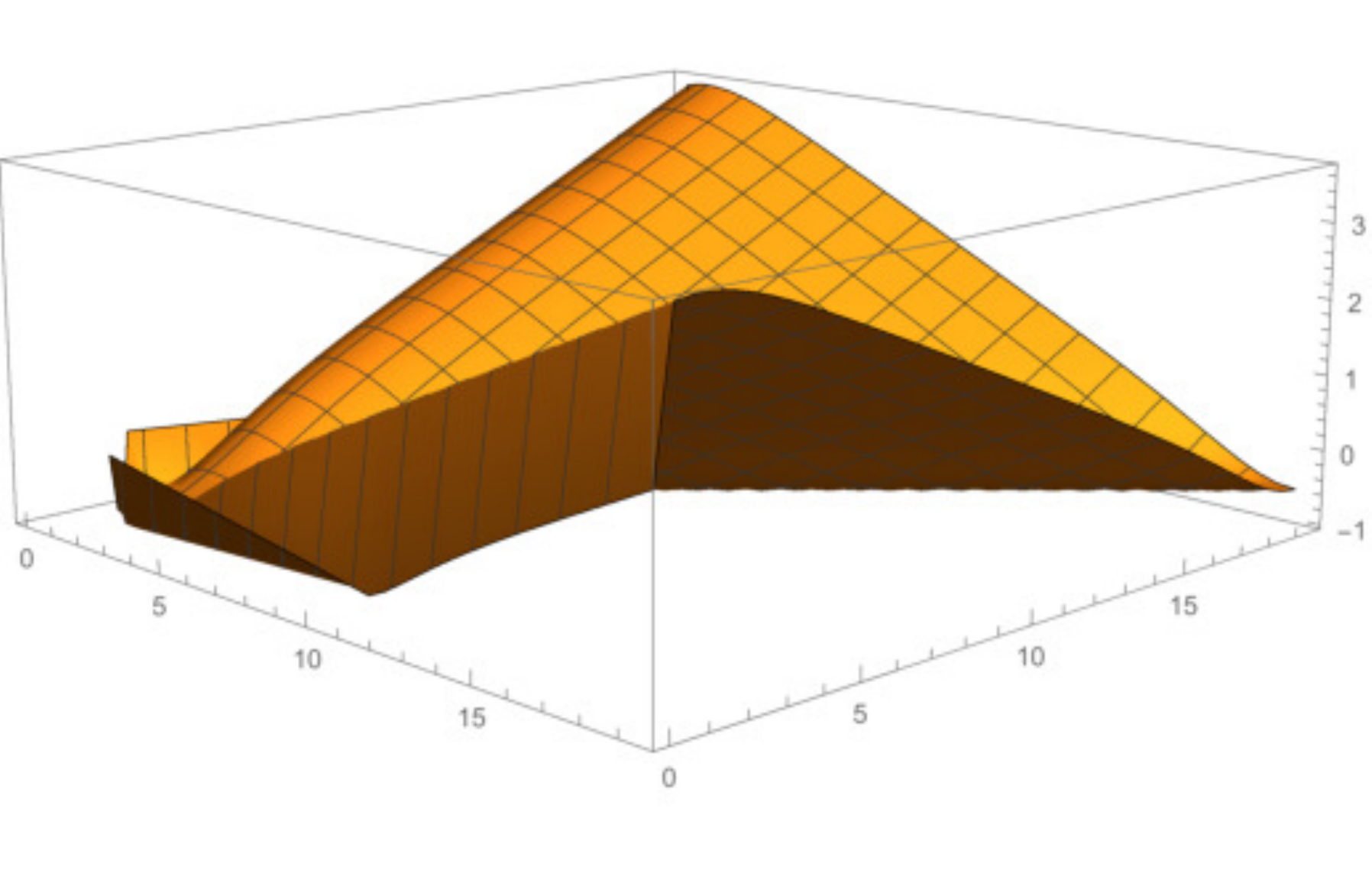}

\end{center}

\end{proof}

\newpage{}

\section{Appendix}

\begin{prop}
The curvature of $(\mathcal{M}_{\mathcal{G}_{2}}^{1},||\cdot||_{WP})$
could be written explicitly as the following. For $0<x,y$ and $e^{x+y}<3+e^{x}+e^{y}$,
we have 

\begin{align*}
 & K_{WP}(x,y)=\\
 & \frac{1}{4\left(e^{x}+1\right)^{2}\left(e^{y}+1\right)^{2}f(x,y)}\cdot\left(e^{x}x(x-y-4)+2e^{2x}x(x-y-4)+2x(y-2)e^{3x+4y}+2(x-2)ye^{4x+3y}-\right.\\
 & \;(x+4)e^{3y}y-e^{3x}x(y+4)+e^{y}y(-x+y-4)+2e^{2y}y(-x+y-4)+ye^{4x+y}(2x+y-4)+\\
 & \;2ye^{4x+2y}(2x+y-4)+xe^{x+4y}(x+2y-4)+2xe^{2x+4y}(x+2y-4)+4e^{x+y}\left(x^{2}-4x+(y-4)y\right)+\\
 & \;2e^{x+3y}\left(2x^{2}+x(3y-8)-8y\right)+2e^{3x+y}(x(3y-8)+2(y-4)y)+4e^{3(x+y)}(x(3y-4)-4y)+\\
 & \;e^{2x+y}\left(8x^{2}+x(5y-32)+6(y-4)y\right)+e^{x+2y}\left(6x^{2}+x(5y-24)+8(y-4)y\right)+\\
 & \;4e^{2(x+y)}\left(3x^{2}+x(5y-12)+3(y-4)y\right)+e^{2x+3y}\left(8x^{2}+x(17y-32)-24y\right)+e^{3x+2y}(x(17y-24)+8(y-4)y)-\\
 & \;\left(5(x+4)e^{y}+4(x+4)e^{2y}+(x+4)e^{3y}+4e^{2x+4y}(x+y-2)+5e^{x}(y+4)+4e^{2x}(y+4)+e^{3x}(y+4)+\right.\\
 & \;4e^{4x+2y}(x+y-2)+2(x+y+4)+2e^{x+y}(x+y+20)+e^{4x+y}(2x+y-4)+e^{4x+3y}(2x+y-4)+4e^{3x+y}(2x+y-2)+\\
 & \;e^{x+4y}(x+2y-4)+e^{3x+4y}(x+2y-4)+4e^{x+3y}(x+2y-2)+2e^{3(x+y)}(5x+5y-12)+e^{2x+y}(11x+5y+16)+\\
 & \;\left.e^{x+2y}(5x+11y+16)+e^{2x+3y}(19x+17y-32)+e^{3x+2y}(17x+19y-32)+e^{2(x+y)}(26x+26y-24)\right)\\
 & \;\left.\left(e^{x+y}-1\right)^{2}\left(6e^{x+y}+e^{2x+y}+e^{x+2y}+7e^{x}+2e^{2x}+7e^{y}+2e^{2y}+6\right)\log^{2}\left(\frac{e^{x+y}-1}{e^{x}+e^{y}+2}\right)+\log\left(\frac{e^{x+y}-1}{e^{x}+e^{y}+2}\right)\right)
\end{align*}
where 
\[
f(x,y)=\left(xe^{x+2y}+ye^{2x+y}+2e^{x+y}(x+y)+\left(-2e^{x+y}-e^{2x+y}-e^{x+2y}+e^{x}+e^{y}+2\right)\log\left(\frac{e^{x+y}-1}{e^{x}+e^{y}+2}\right)+e^{x}x+e^{y}y\right).
\]

\end{prop}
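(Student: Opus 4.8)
The plan is to obtain the stated expression by a direct application of the Brioschi formula to the first fundamental form $(E_{WP},F_{WP},G_{WP})$ computed in the previous proposition. Recall that these three coefficients share the common denominator $(e^x+e^y+2)(e^{x+y}-1)f(x,y)$, so the Weil-Petersson metric is conformal to the pressure metric, differing only by the scalar factor $1/V(l)$ with $V(l)=\int l\,\mathrm{d}m_{-l}$. This conformal relationship already organizes the computation: every derivative that enters the Brioschi determinants is a rational function of $e^x$, $e^y$ and the single transcendental quantity $\log\left(\frac{e^{x+y}-1}{e^x+e^y+2}\right)$, and it is exactly this logarithm that survives into the final numerator and explains the appearance of the $\log$ and $\log^2$ blocks in the claimed formula.

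First I would record the partial derivatives $E_x,E_y,E_{yy},F_x,F_y,F_{xy},G_x,G_y,G_{xx}$ of the coefficients. Since each coefficient is a quotient whose denominator contains $f(x,y)$, differentiating requires the derivatives of $f$, and here one uses $\frac{\partial}{\partial x}\log\left(\frac{e^{x+y}-1}{e^x+e^y+2}\right)=\frac{e^{x+y}}{e^{x+y}-1}-\frac{e^x}{e^x+e^y+2}$ together with the analogous $y$-derivative. Substituting these into the two $3\times 3$ determinants of the Brioschi formula and dividing by $(EG-F^2)^2$ then yields $K_{WP}(x,y)$. After clearing the common factors $(e^x+1)^2(e^y+1)^2$ and $f(x,y)$ from numerator and denominator, the contributions proportional to $\log^0$, $\log^1$ and $\log^2$ assemble into the three grouped blocks displayed in the statement.

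The main obstacle is purely the algebraic bookkeeping: the polynomials in $e^x,e^y$ produced by expanding the determinants involve a large number of monomials, and verifying that they collapse to the compact form above requires tracking cancellations that are impractical to carry out by hand. Accordingly I would perform the differentiation, substitution, and simplification in a computer algebra system (Mathematica), exactly as was done to produce the estimates in Observation \ref{obs: Belt buckle WP}, and then read off the grouped expression. The only genuinely structural content beyond this mechanical step is the prior verification, used already in the preceding propositions, that $E_{WP},F_{WP},G_{WP}$ are the correct coefficients, which rests on Proposition \ref{Prop: var formula graph} and on the parallelogram identity for the cross term $F_{WP}$.
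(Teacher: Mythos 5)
Your proposal is correct and follows essentially the same route as the paper: the paper obtains this expression by applying the Brioschi formula directly to the previously computed coefficients $E_{WP},F_{WP},G_{WP}$ of the first fundamental form, with the symbolic expansion and simplification carried out in Mathematica (as indicated in the evidence for Observation~\ref{obs: Belt buckle WP}). Your structural remarks — the common denominator, the conformal factor $1/V(l)$, and the fact that the single transcendental quantity $\log\left(\frac{e^{x+y}-1}{e^{x}+e^{y}+2}\right)$ is what produces the $\log$ and $\log^{2}$ blocks — are consistent with the stated formula and with how the paper organizes the computation.
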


\begin{prop}
The curvature of $(\mathcal{M}_{\mathcal{G}_{3}}^{1},||\cdot||_{WP})$
could be written explicitly as the following. For $0<x,y$ and $4>(e^{x}-1)(e^{y}-1)$,
we have 
\begin{align*}
 & K_{WP}(x,y)=\\
 & -\left\{ 4\left(e^{x}-1\right)\left(e^{y}-1\right)\right.\cdot\\
 & \left.\left(xe^{x+y}+ye^{x+y}-\log\left(\left(e^{x}-1\right)\left(e^{y}-1\right)\right)+2\left(-e^{x+y}+e^{x}+e^{y}\right)\log\left(\frac{1}{2}\sqrt{\left(e^{x}-1\right)\left(e^{y}-1\right)}\right)-e^{x}x-e^{y}y+\log(4)\right)\right\} ^{-1}\cdot\\
 & \left\{ -4e^{x}x+4e^{2x}x+8xe^{x+y}+4xe^{2(x+y)}-8xe^{2x+y}-4xe^{x+2y}+e^{x}x^{2}-e^{2x}x^{2}-2x^{2}e^{x+y}-x^{2}e^{2(x+y)}+2x^{2}e^{2x+y}+\right.\\
 & x^{2}e^{x+2y}-4e^{y}y+4e^{2y}y+8ye^{x+y}+4ye^{2(x+y)}-4ye^{2x+y}-8ye^{x+2y}-e^{x}xy+e^{2x}xy-xe^{y}y+xe^{2y}y-\\
 & -2y^{2}e^{x+y}-y^{2}e^{2(x+y)}+y^{2}e^{2x+y}+2y^{2}e^{x+2y}+2xye^{2(x+y)}+xye^{2x+y}+xye^{x+2y}+e^{y}y^{2}-e^{2y}y^{2}-\\
 & x\log(4)+xe^{y}\log(4)-y\log(4)+e^{x}y\log(4)-\left(x\left(e^{y}-1\right)+\left(e^{x}-1\right)y\right)\log\left(\left(e^{x}-1\right)\left(e^{y}-1\right)\right)+\\
 & 2\left[-2(x-4)e^{2x+y}-16e^{x+y}+(3x+8)e^{y}+e^{2x}(x-y-4)-2(y-4)e^{x+2y}+e^{2y}(-x+y-4)\right.\\
 & \left.\left.\left(+e^{2(x+y)}(x+y-4)-2(x+y+2)+e^{x}(3y+8)\right)\right]\log\left(\frac{1}{2}\sqrt{\left(e^{x}-1\right)\left(e^{y}-1\right)}\right)\right\} 
\end{align*}

\end{prop}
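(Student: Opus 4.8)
The plan is to substitute the first fundamental form of $(\mathcal{M}_{\mathcal{G}_3}^1,||\cdot||_{WP})$, already recorded above, into the Brioschi formula (the Lemma above) and simplify. The decisive structural observation is that this metric is \emph{orthogonal}: we computed $F_{WP}(x,y)=0$, so the off-diagonal term drops out and the Brioschi determinants collapse to the classical curvature formula for a diagonal metric $ds^2=E\,dx^2+G\,dy^2$,
\[
K_{WP}=-\frac{1}{2\sqrt{E_{WP}G_{WP}}}\left[\frac{\partial}{\partial x}\!\left(\frac{\partial_x G_{WP}}{\sqrt{E_{WP}G_{WP}}}\right)+\frac{\partial}{\partial y}\!\left(\frac{\partial_y E_{WP}}{\sqrt{E_{WP}G_{WP}}}\right)\right].
\]
This is far more tractable than the full $3\times 3$ determinants, and it is the expression I would actually differentiate.

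The second observation, which is what makes the simplification close up, is that the product $E_{WP}G_{WP}$ collapses: since
\[
E_{WP}G_{WP}=\frac{e^x(e^y-1)}{(e^x-1)f(x,y)}\cdot\frac{(e^x-1)e^y}{(e^y-1)f(x,y)}=\frac{e^{x+y}}{f(x,y)^2},
\]
the conformal density is the elementary expression $\sqrt{E_{WP}G_{WP}}=e^{(x+y)/2}/f(x,y)$, carrying only a single power of $f$. Writing $E_{WP}=A/f$ and $G_{WP}=B/f$ with $A,B$ rational in $e^x,e^y$, every quantity inside the bracket becomes a ratio whose denominator is a power of $f$ times a power of $e^{(x+y)/2}$, while the outer factor $1/\sqrt{E_{WP}G_{WP}}$ reintroduces one factor of $f$; after cancellation the denominator reduces to $4(e^x-1)(e^y-1)f(x,y)$, exactly as stated. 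Concretely I would first compute the only derivatives that enter, namely $f_x,f_y,A_x,B_y$, noting that $f$ is linear in the logarithms $\log\!\left((e^x-1)(e^y-1)\right)$ and $\log\!\left(\tfrac12\sqrt{(e^x-1)(e^y-1)}\right)$ with rational coefficients, so that $f$ and all its derivatives stay in the class (rational)$\,+\,$(rational)$\cdot$(log); then assemble the bracket and clear denominators.

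The main obstacle is purely the algebraic bookkeeping, and two features deserve care. First, one must track the logarithmic terms: products such as $(\partial_x G_{WP})\cdot f$ can \emph{a priori} generate $\log^2$ contributions, and verifying that these cancel — leaving only the terms linear in $\log$ displayed in the statement — is the one genuinely substantive cancellation. It is the dumbbell counterpart of the surviving $\log^2$ terms in the belt-buckle formula above, and it is specific to the structure of this metric. Second, clearing the powers of $e^{(x+y)/2}$ that the derivatives produce and re-expressing everything over the single denominator $4(e^x-1)(e^y-1)f$ forces one to collect a large number of monomials in $e^x,e^y$. For this reason I would perform the expansion with a computer algebra system (Mathematica), consistent with the rest of Section \ref{sec:Examples}.

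As an independent check I would exploit conformality. Since $||\cdot||_{WP}$ is conformal to $||\cdot||_{P}$ with $g_{WP}=V(l)^{-1}g_{P}$, where $V(l)=\int l\,\mathrm{d}m_{-l}$, the two-dimensional change-of-curvature formula gives
\[
K_{WP}=V(l)\left(K_{P}+\tfrac12\,\Delta_{P}\log V(l)\right),
\]
with $\Delta_{P}$ the Laplace--Beltrami operator of the pressure metric. Because $K_{P}(x,y)=\dfrac{2e^{x+y}-1}{4e^{x+y}-3e^x-3e^y+2}$ is already known from Proposition \ref{prop: Dumbbell Pressure} and $E_P,G_P$ (with $F_P=0$) are explicit, computing $\Delta_{P}\log V(l)$ is routine, and I would confirm that the result agrees with the expression in the statement. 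This second route also clarifies the origin of the single power of $f$ in the denominator, since $V(l)$ and $f$ differ only by the rational factor $4e^{x+y}-3e^x-3e^y+2$.
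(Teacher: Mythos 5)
This is essentially the paper's own route: the appendix formula is produced exactly as you describe, by substituting the first fundamental form (with $F_{WP}=0$) into the Brioschi formula and simplifying by computer algebra, and your orthogonal-metric reduction, the identity $E_{WP}G_{WP}=e^{x+y}/f(x,y)^{2}$, and the conformal cross-check against $K_{P}$ are all correct refinements of that same computation. In particular, the $\log^{2}$-cancellation you flag is genuine: writing both logarithms in terms of $L=\log\left((e^{x}-1)(e^{y}-1)\right)$, the coefficient of $L$ in $f$ is $-(e^{x}-1)(e^{y}-1)$, and one checks that the quadratic-in-$L$ terms cancel identically, which is why only terms linear in the logarithms survive in the stated numerator.
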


\bibliographystyle{amsalpha}
\bibliography{/Users/nyima/Dropbox/TEX/Bibtex/Papers3_backup_20151007}

\providecommand{\bysame}{\leavevmode\hbox to3em{\hrulefill}\thinspace}
\providecommand{\MR}{\relax\ifhmode\unskip\space\fi MR }
% \MRhref is called by the amsart/book/proc definition of \MR.
\providecommand{\MRhref}[2]{%
  \href{http://www.ams.org/mathscinet-getitem?mr=#1}{#2}
}
\providecommand{\href}[2]{#2}
\begin{thebibliography}{BCLS13}

\bibitem[BCLS13]{Bridgeman:2013to}
Martin Bridgeman, Richard Canary, Fran{\c c}ois Labourie, and Andr{\'e}s
  Sambarino, \emph{{The pressure metric for Anosov representations}}, arXiv.org
  (2013).

\bibitem[CV86]{Culler:1986jm}
Marc Culler and Karen Vogtmann, \emph{{Moduli of graphs and automorphisms of
  free groups}}, Inventiones Mathematicae \textbf{84} (1986), no.~1, 91--119.

\bibitem[JP07]{Jordan:2007dk}
Thomas Jordan and Mark Pollicott, \emph{{Multifractal analysis and the variance
  of Gibbs measures}}, Journal of the London Mathematical Society. Second
  Series \textbf{76} (2007), no.~1, 57--72.

\bibitem[Mcm08]{McMullen:2008eh}
Curtis Mcmullen, \emph{{Thermodynamics, dimension and the Weil-Petersson
  metric}}, Inventiones Mathematicae \textbf{173} (2008), no.~2, 365--425.

\bibitem[Pol94]{Pollicott:1994wq}
Mark Pollicott, \emph{{Derivatives of topological entropy for Anosov and
  geodesic flows}}, Journal of Differential Geometry \textbf{39} (1994), no.~3,
  457--489.

\bibitem[PP90]{Parry:1990tn}
William Parry and Mark Pollicott, \emph{{Zeta functions and the periodic orbit
  structure of hyperbolic dynamics}}, Ast\'erisque (1990), no.~187-188, 268.

\bibitem[PS14]{Pollicott:2014fs}
Mark Pollicott and Richard Sharp, \emph{{A Weil-Petersson type metric on spaces
  of metric graphs}}, Geometriae Dedicata \textbf{172} (2014), no.~1, 229--244.

\end{thebibliography}

\end{document}